\newtheorem{theorem}[equation]{Theorem}
\newtheorem{lemma}[equation]{Lemma}
\newtheorem{proposition}[equation]{Proposition}
\newtheorem{corollary}[equation]{Corollary}
\newtheorem{hypothesis}[equation]{Hypothesis}
\theoremstyle{definition}
\newtheorem{definition}[equation]{Definition}
\newtheorem{remark}[equation]{Remark}
\newcommand{\abs}[1]{\left\vert#1\right\vert}
\newcommand{\X}{\ensuremath{\mathcal{X}}}
\newcommand{\E}{\ensuremath{\mathcal{E}}}
\newcommand{\D}{\ensuremath{\mathcal{D}}}
\newcommand{\C}{\ensuremath{\mathcal{C}}}
\renewcommand{\P}{\ensuremath{\mathcal{P}}}
\DeclareMathOperator{\hocmp}{hocmp}
\DeclareMathOperator{\cmp}{cmp}
\DeclareMathOperator{\hofiber}{hofiber}
\DeclareMathOperator{\fiber}{fiber}
\DeclareMathOperator{\hocofiber}{hocof}
\DeclareMathOperator{\hocof}{hocof}
\DeclareMathOperator{\holim}{holim}
\DeclareMathOperator{\hocolim}{hocolim}
\DeclareMathOperator{\colim}{colim}
\numberwithin{equation}{section} 
\author{Lu\'is Alexandre Pereira}
\begin{document}

\title{A general context for Goodwillie Calculus}

\maketitle

The objective of this paper is to provide a fairly general model category context in which one can perform Goodwille Calculus. 

There are two main parts to the paper, the first establishing general conditions which guarantee the existence of universal $n$-excisive approximations, and a second providing conditions for which the layers of the Goodwillie tower can be appropriately factored through the spectra categories.


\section{Overview}

The main goal of this paper is to establish a fairly general setup in which to do Goodwillie calculus, extending the treatment of Goodwillie in \cite{Goodwillie}.

There are two main results. The first is the existence of universal $n$-excisive approximations, which is Theorem \ref{univ}. The second is the classification of $n$-homogeneous functors $F \colon \C \to \D$ (at least under some finitary conditions) as having the form $F = \Omega^\infty((\bar{F}\circ (\Sigma^\infty)^{\times n})_{h \Sigma_n})$, where $\bar{F} \colon Stab(\C)^{\times n} \to Stab(\D)$ is a symmetric multilinear functor between the stabilizations. In the current write-up, this result corresponds to piecing together Proposition \ref{factspectratar}, which shows such functors factor through $Stab(\D)$ (provided they are determined by their values on a small subcategory $\C' \subset \C$), Proposition \ref{homogarecross}, showing that, provided the target category is stable, a homogeneous functor is recovered from it's cross effect, and Theorem \ref{multfact}, which proves directly that multilinear symmetric functors factor through stabilizations on both source and target.

The following is a more detailed overview of the paper.

Section \ref{setup} establishes some of the general assumptions on model categories and functors between them used throughout the paper. Particularly important is Proposition \ref{comholim}, stating informally that, in any cofibrant model category, ``linear/direct'' colimits over a large enough ordinal commute with finite holims. 

Section \ref{polyfunct} establishes the basic definitions of excisive functors (via hocartesian/hococartesian cubes), and many of their basic properties, which are generally straightforward generalizations of the analogous properties found in \cite{Goodwillie}. Of some technical interest seems to be Proposition \ref{comcones}, which is currently necessary for the proof of Theorem \ref{univ} (check Remark \ref{extracond} for more on this). 
 
Section \ref{taylor} then establishes the first main result, Theorem \ref{univ}, for the existence of universal $n$-excisive approximations $P_nF$. These are constructed via Goodwillie's $T_nF$, except those are iterated a transfinite number of times. Excisiveness of this construction then follows by Goodwillie's usual proof combined with Proposition \ref{comholim} and holds generally under the assumption that $\C,\D$ are cofibrantly generated model categories. The proof of universality seems currently more delicate, however, relying on Proposition \ref{comcones}, which currently requires the source category $\C$ to be either a pointed simplicial model category or a left Bousfield localization of one.

Section \ref{goodstab} establishes that, when the target category $\D$ is stable, homogeneous functors and multilinear functors determine each other (Propositions \ref{multarecross} and \ref{homogarecross}), and is a fairly straightforward generalization of the treatment in \cite{Goodwillie}. This requires no further assumptions on the categories.

Section \ref{delooping} establishes that homogeneous functors can be delooped. This \emph{does} require additional conditions on the categories. Namely, it is rather unclear how to generalize Goodwillie's proof of this when the $T_n$ are to be iterated a transfinite number of times, so that one is forced to assume $P_n$ can be constructed via countable iterations, and this, in turn, essentially amounts to assuming that, in the target category $\D$, countable directed hocolims commute with finite holims. It also establishes the infinitely iterated result, Proposition \ref{factspectratar}.

Finally, Section \ref{factspectra}, deals with showing Theorem \ref{multfact}. This too imposes more conditions on the model categories, most obviously requiring that the stabilizations (as defined in \cite{Hovey}) exist, and the further technical condition that in those stabilizations one can detect weak equivalences by looking at the $\Omega^{\infty -i}$ functors (see \cite{Hovey} for a discussion of this condition).

\section{Setup for homotopy calculus}\label{setup}

This section lists assumptions that appear repeatedly in the paper, and deduces some of their basic consequences.

Pervasive throughout the paper is the assumption that the model categories used are cofibrantly generated. We recall the definition (and set notation) in \ref{cofgen}, along with some results about the functoriality of hocolimits and holimits in such model categories.

\ref{assumpfunct} deals with the general assumptions made on functors between model categories.

\ref{assumpsimp} deals with the further assumptions made when the model categories are also assumed simplicial.

\ref{assumpspectra} deals with detailing the notions of spectra in a general simplicial model category that are used in Section \ref{factspectra}.

Finally, \ref{assumpquillenequiv} deals with showing that our definitions are nicely compatible with replacing one of the model categories involved by a Quillen equivalent one.

\subsection{Cofibrant generation}\label{cofgen}

\begin{definition}
A model category $\C$ is said to be cofibrantly generated if there exist sets of maps $I$ and $J$, and regular cardinal $\kappa$ such that

\begin{enumerate}
\item The domains of $I$ are small relative to $I$ with respect to $\kappa$.
\item The trivial fibrations are precisely the maps with the RLP\footnote{Right Lifting Property.} with respect to $I$.
\item The domains of $J$ are small relative to $J$ with respect to $\kappa$.
\item The trivial fibrations are precisely the maps with the RLP with respect to $J$.
\item Both the domains and targets of $I$ and $J$ are small relative to $I$ with respect to $\kappa$.
\end{enumerate}
\end{definition}

\begin{remark}
We refer to \cite{Hirsch}, section 10, for the precise definitions of ``smallness with respect to'', but we give a sketch definition here: 

$X$ is said to be $\kappa$-small with respect to $I$ if for any $\lambda \geq \kappa$ regular cardinal\footnote{Here, as in \cite{Hirsch}, cardinals are to be viewed as ordinals by selecting the initial ordinal with that cardinality.} and $\lambda$-sequence\footnote{We recall that a functor $Y_.\colon \lambda \to \C$ is called a $\lambda$-sequence if it is ``well behaved'' on limit ordinals, i.e. $Y_\alpha=\colim_{\beta < \alpha}Y_{\beta}$ for $\alpha<\gamma$ a limit ordinal.}
\[Y_0\to Y_1 \to Y_2 \to \dots \to Y_\beta \to Y_{\beta +1} \to \dots (\beta < \lambda)\]
where all maps $Y_\beta \to Y_{\beta +1}$ are relative $I$-cell complexes\footnote{The more ``simplicially minded'' reader can likely ignore this, as in those contexts objects are often small with respect to all maps.}, the natural map 
\[\colim_{\beta < \lambda} Hom(X,Y_\beta)\xrightarrow{\sim} Hom(X,\colim_{\beta < \lambda}Y_\beta)\]
is an equivalence.
\end{remark}

\begin{remark}
Though the first condition in our definition is a particular case of the last, we choose to include it so as to draw attention to the fact that that condition is not present in the definition of cofibrantly generated model category in \cite{Hirsch}.

\end{remark}

Our goal is to implement Goodwillie's construction of the polynomial approximations in this general context. Cofibrant generation allows us to do so, thanks to the following results:

\begin{proposition}\label{functrep}
Suppose $I$ a set of maps whose domains are small relative to $I$ with respect to a regular cardinal $\kappa$.

Then there exists a functorial factorization
\[\C^\downarrow \to \C^\downarrow \times \C^\downarrow\]
factoring any map into a relative $I$-cell complex followed by an $I$-injective.
\end{proposition}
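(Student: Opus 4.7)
The plan is to carry out Quillen's small object argument in this setting. Let $\lambda$ be a regular cardinal with $\lambda \geq \kappa$ (taking $\lambda = \kappa$ works); we will construct, for any map $f\colon X \to Y$ in $\C^\downarrow$, a $\lambda$-sequence $X = Z_0 \to Z_1 \to \cdots \to Z_\beta \to \cdots$ over $Y$, and then set the factorization to be $X \to \colim_{\beta<\lambda} Z_\beta \to Y$.

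At each successor stage $\beta+1$, I would let $S_\beta$ denote the set of all commutative squares
\[
\begin{array}{ccc} A & \to & Z_\beta \\ \downarrow {\scriptstyle i} & & \downarrow \\ B & \to & Y \end{array}
\]
with $i\in I$, and then define $Z_{\beta+1}$ as the pushout of $\coprod_{S_\beta} A \to \coprod_{S_\beta} B$ along the canonical map $\coprod_{S_\beta}A \to Z_\beta$, equipped with the induced map to $Y$. At limit ordinals I would take the colimit, which exists since $\lambda$ is an ordinal. Functoriality of this assignment in the map $f$ is immediate: a morphism $f \to f'$ in $\C^\downarrow$ induces, by the universal property of the pushout together with the fact that $S_\beta$ is the indexing set of \emph{all} relevant squares, compatible maps $Z_\beta \to Z'_\beta$ for all $\beta$, and hence a map on the colimits.

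By construction, each $Z_\beta \to Z_{\beta+1}$ is a pushout of a coproduct of maps in $I$, and the transfinite composition $X \to \colim_{\beta<\lambda} Z_\beta$ is therefore a relative $I$-cell complex. To check that the resulting map $\colim_{\beta<\lambda} Z_\beta \to Y$ is $I$-injective, I would consider an arbitrary lifting problem given by $i\colon A \to B$ in $I$, a map $A \to \colim_{\beta<\lambda} Z_\beta$, and a map $B \to Y$. Here is where the smallness hypothesis enters: since $A$ is $\kappa$-small relative to $I$ and each map $Z_\beta \to Z_{\beta+1}$ is a relative $I$-cell complex, the map $A \to \colim_{\beta<\lambda} Z_\beta$ factors through some $Z_\beta$ with $\beta < \lambda$. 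But then the resulting square belongs to $S_\beta$, so the very construction of $Z_{\beta+1}$ provides a lift $B \to Z_{\beta+1} \to \colim_{\beta<\lambda} Z_\beta$.

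The only real subtlety is the verification that $A$'s smallness ensures the factorization through some $Z_\beta$; this is purely a matter of unwinding the definition of $\kappa$-smallness relative to $I$, and it is the reason why the hypothesis is phrased in terms of \emph{relative $I$-cell complexes} rather than a more restrictive class. Everything else is book-keeping, and the whole construction is tautologically functorial in $f$ because at each stage one indexes over \emph{all} squares.
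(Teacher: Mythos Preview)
Your proposal is correct and is precisely Quillen's small object argument, which is exactly what the paper invokes (the paper simply cites section 2.1.2 of \cite{HoveyMC} rather than spelling out the construction). You have written out the standard proof in more detail than the paper does, but the approach is identical.
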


\begin{proof}
This is just Quillen's well known small object argument. A reference for this is section 2.1.2 of \cite{HoveyMC}, or most other basic treatments of model categories.
\end{proof}

Note that, according to the previous result, in any general finitely generated model category $\C$, one can construct functorial cofibrant and fibrant replacement functors. We will denote fixed such functors by $Q$ and $R$ in the remainder of this paper.

We will also need to make special note of particularly nice types of diagram categories that feature prominently in Goodwillie calculus (see \cite{Clark} for the following definitions):

\begin{definition}\label{dirinv}
A small category $A$ is called a {\bf direct category} if there is an identity reflecting functor $A \to \lambda$, where $\lambda$ is an ordinal category.

A small category $A$ is called an {\bf inverse category} if $A^{op}$ is a direct category.
\end{definition}

It is well known that when $A$ is direct a projective model structure exists on any $\C^A$, for $\C$ any model category (check \cite{Clark} for the characterization of this model structure). Dually, when $A$ is an inverse category, $\C^A$ always has an injective model structure. We will use the following basic properties about such model structures:

\begin{proposition}\label{restprop}
Let $A$ be a direct category and $A'$ an initial subcategory (i.e., such that if $x \in A'$ then all maps $x' \to x$ are in $A'$). Note that $A'$ is then itself a direct category. Then if $F \colon A \to \C$ is projective cofibrant, then so is the restriction $F|_{A'}$. Furthermore, the analogous result holds for cofibrations $F \rightarrowtail F'$.

Let $A,B$ be direct categories. Then $A\times B$ is a direct category and, if $F \colon A \times B \to \C$ is projective cofibrant, then so are the restrictions $F(a,\bullet)$, $F(\bullet,b)$ for any $a \in A, b \in B$. Furthermore, the analogous result holds for cofibrations $F \rightarrowtail F'$.
\end{proposition}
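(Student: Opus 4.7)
The approach is to prove both parts by the same strategy: exhibit each restriction functor, namely $i^* \colon \C^A \to \C^{A'}$ in Part 1 and $\iota^* \colon \C^{A \times B} \to \C^B$ sending $F$ to $F(a_0, \bullet)$ in Part 2, as a left Quillen functor for the projective model structures. Since a left Quillen functor preserves cofibrations in both the absolute sense (cofibrant objects) and the relative sense (cofibrations between arbitrary objects), this immediately yields both statements of each part. Because projective (trivial) fibrations are detected pointwise, each restriction functor already preserves (trivial) fibrations and so is automatically right Quillen; the content lies in showing it is also left Quillen, which I will do by computing its right adjoint (a right Kan extension) and verifying that this right adjoint is itself right Quillen.

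For Part 1, the initial-subcategory hypothesis makes $A'$ a sieve in $A$: for $a \in A'$ the comma category $(a \downarrow i)$ has initial object $(a, \mathrm{id}_a)$, whereas for $a \notin A'$ it is empty, since any morphism $a \to a'$ with $a' \in A'$ would force $a$ itself into $A'$. Consequently the right Kan extension $i_*$ satisfies $(i_* G)(a) = G(a)$ for $a \in A'$ and $(i_* G)(a) = \ast$, the terminal object of $\C$, otherwise. A pointwise (trivial) fibration $G \to G'$ in $\C^{A'}$ therefore induces a pointwise (trivial) fibration $i_* G \to i_* G'$ in $\C^A$, being $G \to G'$ on $A'$ and $\mathrm{id}_\ast$ off $A'$; hence $i_*$ is right Quillen and $i^*$ is left Quillen, as wanted.

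For Part 2, one first checks that $A \times B$ is direct by composing the identity-reflecting functors $A \to \lambda_A$ and $B \to \lambda_B$ with a strictly monotone pairing $\lambda_A \times \lambda_B \to \lambda$ such as the Hessenberg sum. For fixed $a_0 \in A$ a comma-category computation identifies the right Kan extension as $(\iota_* G)(a, b) = \prod_{A(a, a_0)} G(b)$, since $((a,b) \downarrow \iota)$ decomposes into one copy of $(b \downarrow B)$ for each morphism $f \colon a \to a_0$, and each such copy has initial object $(b, \mathrm{id}_b)$. Since small products of (trivial) fibrations are (trivial) fibrations, by a formal lifting argument, $\iota_*$ preserves pointwise (trivial) fibrations, and $\iota^*$ is therefore left Quillen; restriction at a fixed $b_0 \in B$ is symmetric. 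The main technical point is the right Kan extension computation, but once it is in hand the rest is purely formal, resting only on pointwise detection of projective fibrations and closure of fibrations under products.
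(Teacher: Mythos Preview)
Your argument is correct and takes a genuinely different route from the paper's. The paper argues Part~1 directly from the latching-object description of projective cofibrations over a direct category: since $A'$ is a sieve, the latching object $L_aF$ at any $a\in A'$ is the same whether computed in $A$ or in $A'$, so the relative latching maps for $F|_{A'}\to F'|_{A'}$ are literally a subset of those for $F\to F'$. For Part~2 the paper uses the identification of $\C^{A\times B}$ with $(\C^B)^A$ (with projective over projective) together with the fact that projective cofibrations are in particular pointwise cofibrations, so evaluating at $a_0$ yields a projective cofibration in $\C^B$. Your approach instead establishes that each restriction is left Quillen by explicitly computing its right adjoint (the right Kan extension) and checking it preserves pointwise (trivial) fibrations. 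This is slightly more work but is more portable: it never invokes the latching-map characterisation of cofibrations, relying only on the pointwise description of projective (trivial) fibrations, and it would go through verbatim for projective model structures over categories that are not direct. The paper's argument, by contrast, is shorter precisely because it exploits the explicit cofibration description available in the direct case.
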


\begin{proof}
The first part is straightforward from the definition of the model structure, as the conditions required for $F|_{A'} \to F'|_{A'}$ to be a cofibration are a subset of the conditions for $F \to F'$ to be one.

As for the second part, it follows from noticing that in these projective model structures cofibrations are in particular pointwise cofibrations, and by further viewing the projective model structure on $\C^{A\times B}$ as the projective model structure in $(\C^A)^B$ over the projective model structure in $\C^A$.
\end{proof}

\begin{proposition}
Let $A$ be a direct category, and $\C$ a model category with functorial factorizations as defined in Proposition \ref{functrep}. Then so does the projective model structure on $\C^A$. Moreover, such functorial factorizations can be constructed using those in $\C$.

Furthermore, if $\C$ is assumed simplicial, and with simplicial factorization functors, then $\C^A$ is itself a simplicial model category and the constructed factorizations are again simplicial functors.
\end{proposition}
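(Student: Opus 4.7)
The plan is to construct the factorization on $\C^A$ by transfinite induction along the ordinal-valued degree function $A \to \lambda$ witnessing that $A$ is direct, at each stage invoking the given functorial factorization in $\C$ on a single arrow. Explicitly, given a map $F \to F'$ in $\C^A$, suppose we have built the functor $G$ together with factorizations $F(b) \to G(b) \to F'(b)$ for all $b$ of degree $< \alpha$. For $a$ of degree $\alpha$, form the latching object $L_a G = \colim_{(b \to a) \neq \mathrm{id}_a} G(b)$ (and likewise $L_a F$, which equals $F$ restricted and colimited), and apply the functorial factorization in $\C$ to the canonical map
\[
F(a) \cup_{L_a F} L_a G \To F'(a),
\]
producing $F(a) \cup_{L_a F} L_a G \rightarrowtail G(a) \xrightarrow{\sim} F'(a)$. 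The functor $G$ is extended to morphisms $b \to a$ by composing $G(b) \to L_a G \to F(a) \cup_{L_a F} L_a G \to G(a)$; these assignments are automatically compatible because all maps involved are canonical. At limit ordinals no choice is involved, so $G$ is simply the obvious extension.

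Once $G$ is built, I would verify that $F \to G$ is a projective cofibration and $G \to F'$ a projective trivial fibration. The second is immediate since it is constructed pointwise as a trivial fibration. For the first, the key fact about direct categories (used repeatedly in Proposition \ref{restprop}) is that a map $F \to G$ in $\C^A$ is a projective cofibration if and only if the relative latching maps $F(a) \cup_{L_a F} L_a G \to G(a)$ are cofibrations in $\C$ at each $a$; this is exactly what the construction guarantees. Functoriality of the whole procedure in the arrow $F \to F'$ reduces to the functoriality of the factorization in $\C$, of colimits, and of pushouts, all of which are manifestly functorial in the data.

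For the simplicial enhancement, note that the simplicial enrichment of the projective model structure on $\C^A$ is the one inherited pointwise: the simplicial mapping object between two diagrams is computed via the end in $\C$, and tensors and cotensors are computed pointwise. Since every ingredient of the inductive construction — latching objects (as colimits), pushouts, and the factorization in $\C$ — is a simplicial functor under our hypotheses, the composite construction yields a simplicial functor $(\C^A)^{\downarrow} \to (\C^A)^{\downarrow} \times (\C^A)^{\downarrow}$. This step requires essentially no new ideas once one observes that all building blocks are simplicial.

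The main obstacle, and the only step that genuinely needs care, is handling the transfinite induction past limit ordinals together with the latching-object characterization of projective cofibrations in $\C^A$ when $\lambda$ is large. Concretely, one must check that the inductively defined $G$ is a well-defined functor at limit stages and that the latching-map condition extracted at each $a$ of any degree really does imply that $F \to G$ has the left lifting property against all pointwise trivial fibrations. Both are standard but tedious to verify from first principles; they are essentially the direct-category specialization of the Reedy factorization argument.
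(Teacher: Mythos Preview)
Your proposal is correct: this is precisely the standard Reedy (direct-category) factorization argument, building $G$ by induction on degree via the relative latching maps and invoking the functorial factorization in $\C$ at each stage, and then observing that every ingredient is simplicially enriched when $\C$ is. The paper does not spell out a proof at all --- it simply cites Proposition~6.3 of \cite{RSS} and asserts the argument is essentially the same --- so your explicit construction is exactly the kind of argument the citation is meant to invoke, and there is nothing to compare beyond noting that you have unpacked what the paper leaves as a reference.
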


\begin{proof}
This statement is a variation of Proposition 6.3 of \cite{RSS}, and the proof is essentially the same.
\end{proof}

\begin{proposition} \label{comholim}
Let $\C$ be a finitely generated model category with respect to some regular cardinal $\kappa$. Then $\kappa$-filtered colimits (taken over the $\kappa$ ordinal category) commute with $\kappa$-small limits (taken over inverse categories).

\end{proposition}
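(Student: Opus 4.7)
The plan is to reduce the claim to a strict commutativity statement in the underlying category $\C$, and then to deduce it from the classical fact that $\kappa$-filtered colimits commute with $\kappa$-small limits in the category of sets.

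For the reduction, given $F\colon \kappa \times A \to \C$, I would functorially replace $F$ in the $A$-direction by a pointwise injective-fibrant diagram $\tilde F$ (the injective model structure on $\C^A$ exists since $A$ is inverse, as already recalled in the excerpt), so that the strict limit $\lim_A \tilde F(\beta, -)$ computes $\holim_A F(\beta, -)$ for each $\beta < \kappa$. I would then apply projective cofibrant replacement in the $\kappa$-direction to arrange that the transition maps $\tilde F(\beta, a) \to \tilde F(\beta+1, a)$ are relative $I$-cell complexes and that $\colim_\kappa$ computes $\hocolim_\kappa$. The claim then reduces to showing that the natural map
\[\colim_\kappa \lim_A \tilde F \to \lim_A \colim_\kappa \tilde F\]
is a weak equivalence, and simultaneously that $\colim_\kappa \tilde F$ remains injective-fibrant over $A$ so that the right-hand side actually computes $\holim_A \hocolim_\kappa F$.

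Both assertions should follow from the smallness hypothesis. Condition (5) in the definition of cofibrant generation says that any domain or codomain $Y$ of $I$ or $J$ is $\kappa$-small relative to relative $I$-cell complexes, so $Hom(Y, -)$ commutes with the $\kappa$-sequence colimit in question. Combined with the set-theoretic fact that $\kappa$-filtered colimits commute with $\kappa$-small inverse limits, this exhibits the comparison map as an isomorphism after applying $Hom(Y, -)$ for any such $Y$; the same testing argument transports the pointwise (trivial) fibrations defining injective fibrancy through the colimit.

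The principal obstacle is promoting the pointwise $Hom$-isomorphism just described to a genuine weak equivalence in $\C$, since weak equivalences need not be detected by maps out of the generators. The cleanest route I anticipate for sidestepping this is to present $\holim_A F$, for $A$ inverse of size $<\kappa$, as an iterated homotopy pullback built from products of size $<\kappa$ and finite pullbacks, and then to prove by transfinite induction on the ordinal $\lambda<\kappa$ exhibiting the inverse structure of $A$ that $\kappa$-sequence hocolims commute with each of these building blocks. Each inductive step reduces via the smallness hypothesis to the set-theoretic commutation, and this inductive formulation both makes the preservation of fibrancy automatic at each stage and avoids the need for a single globally compatible fibrant-cofibrant replacement.
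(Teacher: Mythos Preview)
Your first two paragraphs set things up correctly, and you correctly identify the obstacle: a bijection on $Hom(Y,-)$ for $Y$ ranging over (co)domains of $I$ and $J$ does not detect weak equivalences. But your proposed workaround does not actually escape this. The inductive decomposition of $\holim_A$ into iterated pullbacks still leaves you, at each stage, needing to show that a $\kappa$-filtered colimit of (homotopy) pullbacks is a (homotopy) pullback in $\C$, and your sentence ``each inductive step reduces via the smallness hypothesis to the set-theoretic commutation'' is exactly the move you already flagged as insufficient: it gives $Hom$-isomorphisms, not weak equivalences.

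The paper's argument avoids this entirely by never aiming for an isomorphism. After arranging bifibrancy so that strict (co)limits compute the derived ones, it shows two things directly via lifting properties: first, that $\varinjlim_\kappa$ preserves injective fibrancy over $B$ by checking the RLP against the generating trivial cofibrations $J$; second, that the comparison map $\varinjlim_a \bar X_a \to \varprojlim_b \varinjlim_a X_{a,b}$ is a \emph{trivial fibration} by checking the RLP against the generating cofibrations $I$. In both cases the smallness hypothesis is used not to compute $Hom$-sets into a limit, but to factor any given lifting square through some stage $a<\kappa$, where the lift exists by the fibrancy already in hand. The point is that smallness interacts with lifting problems, and lifting problems against $I$ characterize trivial fibrations, which \emph{are} weak equivalences by definition. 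This is the missing idea in your proposal.
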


Before giving the proof, we recall some definitions. For more details, see \cite{Emily}. Given a functor 
\[\C \xrightarrow{F} \D\]
between model categories $\C$ and $\D$, we define the derived functors $\mathbb{L}F,\mathbb{R}F \colon Ho(\C) \to Ho(\D)$ by
\[\mathbb{L}F=Ran_{\gamma_{\C}}\gamma_{\D}\circ F, \quad \mathbb{R}F=Lan_{\gamma_{\C}}\gamma_{\D}\circ F\]
where $\gamma_{\C} \colon \C \to Ho(\C)$, $\gamma_{\D} \colon \D \to Ho(\D)$ are the standard maps.

It is then well known that, when $F$ is a left Quillen (resp. right Quillen) functor, $\mathbb{L}F$ (resp. $\mathbb{R}F$) is the functor induced by $F \circ Q$ (resp. $F \circ R$).

The statement of \ref{comholim} is then that the following diagram (where $A$ is assumed the ordinal category $\kappa$ and $B$ assumed inverse and $\kappa$-small) commutes up to a natural isomorphism.

\begin{equation}\xymatrix{
Ho(\C^{A\times B}) \ar[d]^{\mathbb{L}\varinjlim_A}\ar[r]^{\mathbb{R}\varprojlim_B} & Ho(\C^A) \ar[d]^{\mathbb{L}\varinjlim_A}\\
Ho(\C^B) \ar[r]^{\mathbb{R}\varprojlim_B} & Ho(\C) 
}\label{homcom}\end{equation}

Note, however, that we do not claim the natural isomorphism supplied in the proof can be chosen canonically (this should be of contrasted with the canonical map $\varinjlim_A \varprojlim_B\to \varprojlim_B \varinjlim_A$ ), as it depends at this level on the choice of bifibrant replacements used. 

\begin{proof}[Proof of \ref{comholim}]
The proof will be an adaptation of Resk's argument in \cite{Schwede}, Lemma 1.3.2.

First suppose given a diagram $X_{a,b}$ such that $X_{\bullet,b}$ is injective cofibrant for all $b$ and $X_{a,\bullet}$ is projective fibrant for all $a$. Then we claim than $\varinjlim_a X_{a,b}$ is still projective fibrant. By the assumption that $B$ be an inverse category, this amounts to showing that the maps $\varinjlim_a X_{a,b} \to \varprojlim_{ b \xrightarrow{f} b',f\neq id_b} \varinjlim_a X_{a,b}X_{a,b'}$ are fibrations. Checking this amounts to checking the RLP of these maps with respect to the generating trivial cofibrations $J$. Suppose then $j \to j'$ an element of $J$. A commutative square corresponds to maps $j \to \varinjlim_a X_{a,b}$ and (for each $b \xrightarrow{f} b'$) $j' \to \varinjlim_a X_{a,b}X_{a,b'}$. But compactness allows us to factor each of these maps $j \to X_{a,b}$, etc by some $a$ which can be assumed uniform since $A$ is $\kappa$-filtered. But then a further usage of compactness (and filteredness) tells us that, possibly rechoosing $a$, the commutative square factors through $ X_{a,b} \to \varprojlim_{ b \xrightarrow{f} b',f\neq id_b} X_{a,b'}$, for some $a$, and hence the lifting property follows.

We can now prove the result. First notice that, by first taking a functorial bifibrant replacement of the $A \times B$ diagram, one can assume the first maps in \eqref{homcom} are taken to be straight up limits and colimits. Furthermore, by the further part of the proof, applying $\varinjlim_A$ preserves projective fibrancy, so the second $\varprojlim_B$ can also the taken straight up. 

In other words, taking $\bar{X}_a$ an injective cofibrant replacement to $\varprojlim_b X_{a,b}$, we will be done provided the natural map
\[\varinjlim_a \bar{X}_a \to \varprojlim_b \varinjlim_a X_{a,b}\]
is a weak equivalence. More specifically, we claim it is a trivial fibration. Indeed, letting $i\to i'$ an element of the generating cofibrations $I$, the techniques used in the first part of the proof allow us to factor any commutative square through $\bar{X}_a \to \varprojlim_b X_{a,b}$ for some fixed $a$, and the result follows.

\end{proof}

\begin{remark}

Notice that the previous result also follows (with no alterations to the proof) for other $\kappa$-filtered shapes of the diagram category $A$ provided one knows the domains and codomains of $I$ and $J$ satisfy compactness with respect to such shapes.
\end{remark}

\subsection{Functors}\label{assumpfunct}

We now indicate our assumptions on functors. Throughout we will denote
\[F \colon \C \to \D\]
a functor between a simplicial model category $\C$ and a model category $\D$ (both still assumed cofibrantly generated), which is typically assumed left homotopical, i.e., such that $F\circ Q$ is homotopical\footnote{Recall that a functor is called homotopical if it preserves w.e.s.}. The reason for this slight generalization is that, even for straight up homotopical functors, some of our constructions require precomposition with $Q$ anyway.

We do however notice that, when we allow $F$ to be merely left homotopical, our constructions will be presenting the Goodwillie tower of $F \circ Q$, not of $F$.

\subsection{Simplicial categories and functors}\label{assumpsimp}

In the situation in which we are dealing with simplicial model categories $\C,\D$, we will sometimes (but not always) require the functor $F\colon \C \to \D$ between them to be a simplicial functor. When that is the case we would also like to know that our constructions of the Goodwillie tower still yield simplicial functors. Since cofibrant replacements are necessary when making those constructions, we need the following result, which is proposition 6.3 in \cite{RSS}:

\begin{proposition}
Suppose $\C$ a cofibrantly generated simplicial model category.
Then there exists simplicially functorial factorizations
\[\C^\downarrow \to \C^\downarrow \times \C^\downarrow\]
factoring any map into a trivial cofibration followed by fibration, or as a cofibration followed by trivial fibration.

In particular, $\C$ has simplicial cofibrant and fibrant replacement functors.
\end{proposition}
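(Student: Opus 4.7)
The plan is to run a simplicially enriched version of Quillen's small object argument, modifying the construction from Proposition \ref{functrep} so that the coproducts indexed by sets of commutative squares get replaced by tensor products with the simplicial mapping spaces of such squares.

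Concretely, to factor a map $f\colon X \to Y$, I would inductively construct a $\lambda$-sequence $X = X_0 \to X_1 \to \cdots$ of objects over $Y$ as follows. Given $f_n\colon X_n \to Y$, for each generating map $i\colon A_i \to B_i$ in $I$ form the simplicial mapping space of squares
\[
\text{Map}^{sq}(i, f_n) \;=\; \text{Map}(A_i, X_n) \times_{\text{Map}(A_i, Y)} \text{Map}(B_i, Y),
\]
and define $X_{n+1}$ as the pushout of
\[
X_n \;\longleftarrow\; \coprod_{i \in I} A_i \otimes \text{Map}^{sq}(i, f_n) \;\longrightarrow\; \coprod_{i \in I} B_i \otimes \text{Map}^{sq}(i, f_n),
\]
with $f_{n+1}\colon X_{n+1} \to Y$ induced by the universal property. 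Since $\text{Map}(-,-)$, the tensor $\otimes$, coproducts, and pushouts are all part of the simplicial enrichment of $\C$, the assignment sending $f$ to this entire intermediate data is simplicially natural essentially by construction, and so is the assembled factorization.

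The main technical point is to verify that each transition map $X_n \to X_{n+1}$ is a relative $I$-cell complex, so that the resulting $X \to X_\infty := \colim_{n < \lambda} X_n$ remains a cofibration. This reduces to showing that, for $i\colon A \to B$ in $I$ and $K$ an arbitrary simplicial set, the tensor map $A \otimes K \to B \otimes K$ admits a presentation as a relative $I$-cell complex. One obtains this by filtering $K$ by its skeleta $K_{(n)}$: at each stage one attaches the non-degenerate $n$-simplices, and by SM7 the corresponding increment of $A \otimes K \to B \otimes K$ is presented as a pushout of a coproduct (indexed by those simplices) of copies of the pushout-product of $i$ with $\partial\Delta^n \hookrightarrow \Delta^n$, each of which is itself built from copies of $i$ via further pushouts. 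Hence $X_n \to X_{n+1}$ is a relative $I$-cell complex.

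Finally, the smallness hypotheses on the domains and codomains of $I$ ensure that, for $\lambda$ sufficiently large, the map $X_\infty \to Y$ has the RLP with respect to $I$: any lifting problem against some $i \in I$ factors through a finite stage $f_n$, and the required lift is then provided tautologically by the construction at stage $n+1$, where the corresponding square of $\text{Map}^{sq}(i, f_n)$ contributes an adjoined copy of $B_i$ over which the diagonal is split. The dual construction with $J$ in place of $I$ yields the trivial-cofibration/fibration factorization. The hardest step is the cellularity verification; though standard once set up, one must carefully ensure that the enriched tensor $i \otimes (\partial\Delta^n \hookrightarrow \Delta^n)$ is decomposed via SM7 as a genuine relative $I$-cell complex, not merely an abstract cofibration, since otherwise the output would only be guaranteed to be a cofibration rather than the anodyne cell-complex needed for the smallness-based lifting argument to go through.
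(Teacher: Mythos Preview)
The paper itself gives no proof, merely citing Proposition~6.3 of \cite{RSS}. Your proposal reconstructs that argument---the simplicially enriched small object argument---and its overall shape is correct and matches what Rezk--Schwede--Shipley do.

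There is, however, a real gap at precisely the point you yourself flag as ``the hardest step.'' You assert that the pushout-product of $i$ with $\partial\Delta^n \hookrightarrow \Delta^n$ is ``built from copies of $i$ via further pushouts,'' i.e.\ is a relative $I$-cell complex. But SM7 only guarantees that this map is a \emph{cofibration}; it supplies no cell decomposition, and in general such a pushout-product is merely a retract of a relative $I$-cell complex, not one itself. (Already for $i\colon A \to B$ and $n=1$, after attaching two copies of $i$ along the endpoints you are left with exactly the pushout-product map, and there is no reason this residual map should decompose further into pushouts of maps in $I$.) Consequently your transition maps $X_n \to X_{n+1}$ are only known to be cofibrations, and the route you propose to the smallness needed in the lifting step does not go through as written.

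The repair is not to manufacture a cell structure but to observe that none is needed: an object that is $\kappa$-small relative to relative $I$-cell complexes is automatically $\kappa$-small relative to arbitrary $\lambda$-towers of cofibrations. One sees this by inductively factoring each step $X_\alpha \to X_{\alpha+1}$ of such a tower, via the ordinary small object argument, as a relative $I$-cell map $Y_\alpha \to Y_{\alpha+1}$ followed by an $I$-injective map $Y_{\alpha+1} \to X_{\alpha+1}$, and then using the lifting property of the cofibration $X_\alpha \to X_{\alpha+1}$ against that $I$-injective to build compatible sections $X_{\alpha+1} \to Y_{\alpha+1}$; smallness relative to the $I$-cell tower $(Y_\alpha)$ then transfers to $(X_\alpha)$ along the resulting retraction. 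With this in hand, your lifting argument in the final paragraph works verbatim, since the transition maps being cofibrations is already enough.
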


When dealing with simplicial model categories we will hence always further assume that the chosen replacement functors $Q,R$ are taken to be simplicial.

\begin{remark}
Note that the definition of ``cofibrantly generated'' given in this paper is slightly more demanding than that used in \cite{RSS}.
\end{remark}

\subsection{Spectra on model categories}\label{assumpspectra}

We now explain the notions of spectra that will be used in Section \ref{factspectra}.

Assume $\C$ a pointed simplicial model category, so that it is also in particular tensored over $SSet_*$. Denote this tensoring by $\wedge$.

Following Hovey in \cite{Hovey} one can then define Bousfield-Friedlander type spectra on $\C$ as sequences $(X_i)_{i\in \mathbb{N}}$, plus structure maps 
\[S^1 \wedge X_n \to X_{n+1},\]
where $S^1$ is the standard simplicial circle $\Delta^1/\delta\Delta^1$.

We denote the category of such spectra by $Sp(\C,S^1)$, or just $Sp(\C)$ when no confusion should arise.

Then when $\C$ is cofibrantly generated one has that $Sp(\C)$ always has a cofibrantly generated projective model structure (i.e. a model structure where weak equivalences and fibrations are defined levelwise on the underlying sequences of the spectra), with generating cofibrations and generating trivial cofibrations given respectively as \[I_{proj}=\bigcup_n F_n I,\quad  J_{proj}=\bigcup_n F_n J,\] where $I,J$ denote respectively the generating cofibrations and generating trivial cofibrations of $\C$, and $F_n$ is the level $n$ suspension spectra functor.

To get the correct notion of stable w.e.'s on spectra one then needs to left Bousfield localize with respect to the maps 
\[S= \{F_{n+1}(S^1\wedge A_i) \to F_n A_i)\}\]
where $A_i$ ranges over the (cofibrant replacements of, when necessary) domains and codomains of the maps in $I$.

\begin{definition}\label{descgen}
Let $\C$ be a cofibrantly generated pointed simplicial model category. 

Then we define the stable model structure on $Sp(\C)$ as the 
(cofibrantly generated) left Bousfield localization with respect to $S$, should it exist.
\end{definition}

\begin{remark}
For a detailed treatment of left Bousfield localizations, check \cite{Hirsch}, chapters 3 and 4. 

Notice that we crucially do not want to just assume $\C$ is left proper cellular, as in \cite{Hovey}, where it is shown that that is sufficient for the model structure above to exist.

This is because left properness generally fails in the main examples we are interested in, those of algebras over an operad in symmetric spectra. This turns out to be ok, however, both because one can show directly in those cases that the description above produces a model category, and because even though Hovey requires left properness in many of his statements in \cite{Hovey}, in the ones we shall need he is only really requiring the existence of the model structure just described.
\end{remark}

We now adapt the treatment in \cite{Hirsch} in order to show that one has a nice localization functor in $Sp(\C)$. 

First notice that the projective model structure on $Sp(\C)$ is simplicial (just apply the simplicial constructions levelwise). 

Then set 
\[\tilde{S}=\{ F_{n+1}(S^1\wedge A_i) \to \widetilde{F_n A_i}\}\]
the set of maps obtained by taking a cofibrant factor of those in $S$.

And set 
$\widetilde{\Delta S}= J_{proj} \cup \{F_{n+1}(S^1\wedge A_i)\otimes \Delta_m \coprod_{\{F_{n+1}(S^1\wedge A_i)\otimes \delta \Delta_m} \widetilde{F_n A_i}\otimes \delta\Delta_m \to  \widetilde{F_n A_i}\otimes \Delta_m \}$.

\begin{proposition}\label{localization}
Let $X\in Sp(\C)$. Then the localization of $X$ can be obtained by performing the Quillen small object argument on the map $X\to *$ with respect to the set of maps  $\widetilde{\Delta S}$.

\end{proposition}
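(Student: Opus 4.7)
The approach is to recognize this as an instance of Hirschhorn's construction of a fibrant replacement in a left Bousfield localization (\cite{Hirsch}, chapters 3--4), adapted to the set $\widetilde{\Delta S}$. The small object argument applied to $X \to *$ with respect to $\widetilde{\Delta S}$ produces a factorization $X \to X_\infty \to *$ in which $X \to X_\infty$ is a relative $\widetilde{\Delta S}$-cell complex and $X_\infty \to *$ has the RLP with respect to $\widetilde{\Delta S}$. I would need to verify two things: that $X \to X_\infty$ is a trivial cofibration in the stable model structure, and that $X_\infty$ is stably fibrant. As a preliminary, one should check that the small object argument actually runs: the domains of $J_{proj}$ are small by the cofibrant generation of the projective structure, while the domains of the corner maps are finite colimits built from $F_{n+1}(S^1 \wedge A_i)$, $\widetilde{F_n A_i}$, and finite simplicial sets, so their smallness reduces to smallness of the $A_i$ in $\C$.

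For the first point, I would show every map in $\widetilde{\Delta S}$ is a projective cofibration and a stable weak equivalence. For $J_{proj}$ this is automatic. For the corner maps, being a projective cofibration follows from the pushout-product axiom applied to the projective cofibration $F_{n+1}(S^1 \wedge A_i) \to \widetilde{F_n A_i}$ (a cofibrant replacement of a map in $S$) together with $\delta\Delta_m \hookrightarrow \Delta_m$. Being a stable trivial cofibration follows because $F_{n+1}(S^1 \wedge A_i) \to \widetilde{F_n A_i}$ is a stable trivial cofibration by construction of the localization, and the SM7 axiom for the (simplicial) stable model structure ensures that pushout-products with the cofibration $\delta\Delta_m \hookrightarrow \Delta_m$ preserve being a trivial cofibration. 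Since stable trivial cofibrations are closed under pushouts and transfinite composition, $X \to X_\infty$ is itself a stable trivial cofibration.

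For stable fibrancy of $X_\infty$, the inclusion $J_{proj} \subseteq \widetilde{\Delta S}$ guarantees projective fibrancy. What remains is $S$-locality, i.e. that for each map in $\tilde{S}$ the induced map $\mathrm{Map}(\widetilde{F_n A_i}, X_\infty) \to \mathrm{Map}(F_{n+1}(S^1 \wedge A_i), X_\infty)$ is a weak equivalence of simplicial sets. By the tensor--mapping space adjunction, the RLP of $X_\infty \to *$ with respect to the corner maps in $\widetilde{\Delta S}$ (for all $m$) is precisely equivalent to this simplicial map being a trivial Kan fibration, hence in particular a weak equivalence. This is the key step that justifies enlarging $\tilde{S}$ to the full set $\widetilde{\Delta S}$ of simplicial corner maps: a direct RLP with respect to $\tilde{S}$ would only give a set-theoretic lifting condition, while the corner maps translate via adjunction into exactly the simplicial mapping space condition one needs. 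The main obstacle is taking care that the argument does not implicitly rely on the left properness or cellularity hypotheses used in \cite{Hirsch}, but only on the standing assumption in Definition \ref{descgen} that the stable model structure exists.
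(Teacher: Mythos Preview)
Your proposal is correct and follows essentially the same approach as the paper: the paper's proof is simply a citation of Proposition 4.2.4 in \cite{Hirsch}, together with the observation that left properness and cellularity are only invoked there to guarantee existence of the localized model structure and are otherwise unused, so cofibrant generation suffices. You have unpacked that citation into an explicit argument (smallness of domains, that maps in $\widetilde{\Delta S}$ are stable trivial cofibrations via the simplicial pushout-product axiom, and that the RLP against the corner maps yields $S$-locality via adjunction), which is exactly the content of Hirschhorn's proof; your closing remark about avoiding implicit use of left properness or cellularity matches the paper's one substantive comment.
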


\begin{proof}
This is just Proposition 4.2.4 from \cite{Hirsch}. It should be noted that though the statement asks for the category to be left proper cellular none of those properties is used, and indeed all one needs is cofibrant generation (in order to run the small object argument).

\end{proof}

We will generally denote the by 
\[\Sigma^{\infty} \colon \C \rightleftarrows Sp(\C) \colon \Omega^{\infty}\]
the standard adjoint functors.

We will also require an additional niceness property about the model structure on $Sp(\C)$, namely, we shall require that the stable equivalences be detectable via the $\tilde{\Omega}^{\infty - n}$ functors (these are the derived functors giving the $n$-th space of the spectrum after being made into an omega spectrum).

In \cite{Hovey} Hovey determines conditions under which this follows (and, importantly, those proofs do not evoke left properness):

\begin{theorem}[Hovey]\label{hoveyresult1}
Suppose $\C$ is a pointed simplicial almost finitely generated model category. Suppose further that in $\C$ sequential colimits preserve finite products, and that $Map(S^1,-)$ preserves sequential colimits.

Then stable equivalences in $Sp(\C)$ are detected by the $\tilde{\Omega}^{\infty - n}$ functors.

\end{theorem}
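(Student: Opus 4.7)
The plan is to identify the stable fibrant replacement explicitly in terms of $\Omega$-spectra built by sequential homotopy colimits, and then read off the detection property from the fact that stable equivalences are exactly those inducing levelwise weak equivalences between stable fibrant replacements. More precisely, by general left Bousfield localization theory, the stable fibrant objects in $Sp(\C)$ are the projectively fibrant $\Omega$-spectra, and a map $f \colon X \to Y$ is a stable equivalence iff it induces a levelwise weak equivalence after stable fibrant replacement. So the theorem reduces to exhibiting a stable fibrant replacement functor $\hat{X}$ whose $n$-th space is naturally weakly equivalent to $\hocolim_k \Omega^k X_{n+k}$; once this is done, $\tilde{\Omega}^{\infty-n}X$ can be computed as $\hocolim_k \Omega^k (QX)_{n+k}$ and the detection property follows at once.

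To build $\hat{X}$ I would use Proposition \ref{localization}: run the small object argument on $X \to *$ with respect to $\widetilde{\Delta S}$. The almost finitely generated hypothesis guarantees that the relevant domains and codomains are sequentially small, so that (up to weak equivalence) this small object argument can be truncated at stage $\omega$. Unwinding the attachment of cells from $F_{n+1}(S^1 \wedge A_i) \to \widetilde{F_n A_i}$, the stage-$k$ approximation at level $n$ is essentially built by iteratively adjoining a loop factor, producing a tower
\[X_n \to \Omega X_{n+1}' \to \Omega^2 X_{n+2}' \to \cdots\]
whose sequential colimit is the $n$-th space of the $\omega$-stage output. The hypothesis that $Map(S^1,-)$ preserves sequential colimits (equivalently, $\Omega$ commutes with $\omega$-colimits) is exactly what is needed to see that this colimit is actually an $\Omega$-spectrum, rather than an approximate one: the commutation lets one take the colimit of the adjoint maps $X_n \to \Omega X_{n+1}$ and still have a weak equivalence in the limit. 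The hypothesis that sequential colimits preserve finite products is used alongside this to control the behavior of the simplicial cells in $\widetilde{\Delta S}$ (in particular the horn-type extensions coming from $J_{proj}$) under the colimit, so that projective fibrancy is preserved and no left-properness-type argument is needed.

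Putting this together, one obtains the natural weak equivalence $\hat{X}_n \simeq \hocolim_k \Omega^k X_{n+k}$, and hence $\tilde{\Omega}^{\infty-n} X \simeq \hocolim_k \Omega^k (QX)_{n+k}$ in the homotopy category. A map $f$ is a stable equivalence iff $\hat{f}$ is levelwise a weak equivalence, which by the above is iff $\tilde{\Omega}^{\infty-n} f$ is a weak equivalence for every $n \ge 0$, as required.

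The main obstacle is showing rigorously that the $\omega$-stage of the small object argument really does produce a levelwise $\Omega$-spectrum, and not merely something whose $\Omega$-spectrum defect vanishes in some weaker sense. This requires careful bookkeeping of how the localizing cells act at each level together with a clean application of the two commutation hypotheses to pass structural weak equivalences across sequential colimits; it is precisely this step that replaces Hovey's use of left properness in \cite{Hovey}, and so the proposal is to follow his argument while noting that the portion leading to the detection property goes through without left properness once the two commutation hypotheses are in force.
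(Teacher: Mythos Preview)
The paper does not give a proof of this statement: its entire argument is the sentence ``This is just a particular case of Theorem 4.9 in \cite{Hovey}.'' So there is no in-paper argument to compare your proposal against; the paper simply defers to Hovey's result and notes (in the surrounding discussion) that left properness is not actually used in the relevant portion of Hovey's proof.

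Your sketch is broadly in the spirit of Hovey's argument, but one aspect is a detour that could cause trouble. You propose to reach the formula $\hat{X}_n \simeq \hocolim_k \Omega^k X_{n+k}$ by analyzing the $\omega$-stage of the small object argument for $\widetilde{\Delta S}$ and ``unwinding the attachment of cells.'' This is not how Hovey proceeds, and it is genuinely delicate: the cells in $\widetilde{\Delta S}$ are indexed over all domains/codomains of $I$ and all simplicial degrees, and there is no clean reason the $\omega$-stage of that process should literally be (or be levelwise equivalent to) the naive spectrification. Hovey instead \emph{defines} $\Theta(X)_n = \hocolim_k \Omega^k X_{n+k}$ directly, and then checks (using exactly the two commutation hypotheses you name) that $\Theta(X)$ is an $\Omega$-spectrum and that $X \to \Theta(X)$ is a stable equivalence; the small object localization functor never enters. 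If you drop the small-object step and argue directly with $\Theta$, your outline becomes essentially Hovey's proof, and the remaining claims you make (about where each hypothesis is used, and that left properness is not needed for this part) are correct.
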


\begin{proof}
This is just a particular case of Theorem 4.9 in \cite{Hovey}.
\end{proof}

\begin{remark}
We shall also need to make use of slightly more general kinds of spectra. Namely, we shall denote by $Sp(\C,A)$ the spectra in $\C$ constructed with respect to the endofunctor $A\wedge \bullet$ (in the language of Hovey). In the particular case that $A=(S^1)^{\wedge n}$ these can be thought of as spectra that only include the $ni$-th spaces.

 Notice that the obvious analog of \ref{hoveyresult1} still holds.

\end{remark}

\subsection{Composing functors with Quillen equivalences}\label{assumpquillenequiv}

Suppose we are given a Quillen equivalence 
\[F\colon \C \rightleftarrows \D \colon G.\]

In this section we show that, up to a zig zag of equivalences of functors, studying homotopy functors into $\C$ is the same as studying homotopy functors into $\D$ (the analogous result for functors from $\C,\D$ also holds, but we won't be using it). This is the content of the following results.

\begin{proposition}
Suppose 
\[F\colon \C \rightleftarrows \D \colon G\]
a Quillen equivalence (between cofibrantly generated model categories). Then $F \circ Q_{\C}$ and $G \circ R_{\D}$ are homotopy inverses, i.e., there are zig zags of w.e.s $G \circ R_{\D} \circ F \circ Q_{\C} \sim id_{\C}$, $F \circ Q_{\C} \circ G \circ R_{\D} \sim id_{\D}$

\end{proposition}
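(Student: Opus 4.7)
The plan is to construct both zig-zags directly from the unit $\eta \colon id_\C \to GF$ and counit $\epsilon \colon FG \to id_\D$ of the Quillen adjunction, combined with the natural transformations $q \colon Q_\C \to id_\C$ and $r \colon id_\D \to R_\D$ coming from the functorial cofibrant and fibrant replacements of \ref{cofgen} (each a pointwise weak equivalence by construction).

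For the first zig-zag $G \circ R_\D \circ F \circ Q_\C \sim id_\C$, I would exhibit the natural diagram
\[
id_\C \xleftarrow{\,q\,} Q_\C \xrightarrow{G(r_{FQ_\C}) \circ \eta_{Q_\C}} G R_\D F Q_\C,
\]
where the right-pointing composite is precisely the derived unit evaluated on $Q_\C X$. Since $Q_\C X$ is cofibrant and $R_\D F Q_\C X$ is fibrant for every $X \in \C$, the standard characterization of a Quillen equivalence (for instance Hovey, Proposition 1.3.13) says exactly that this derived unit is a pointwise weak equivalence. The left arrow is a pointwise weak equivalence by construction of $Q_\C$.

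Dually, for $F \circ Q_\C \circ G \circ R_\D \sim id_\D$ I would use
\[
id_\D \xrightarrow{\,r\,} R_\D \xleftarrow{\epsilon_{R_\D} \circ F(q_{GR_\D})} F Q_\C G R_\D,
\]
where the right-pointing composite is the derived counit on $R_\D Y$. Again, $Q_\C G R_\D Y$ is cofibrant and $R_\D Y$ is fibrant, so by the Quillen equivalence hypothesis it is a pointwise weak equivalence; the left arrow is a pointwise weak equivalence by construction of $R_\D$.

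There is no real obstacle beyond unpacking the definition of Quillen equivalence in its derived-unit / derived-counit form. The only subtlety worth flagging is that each leg of the zig-zag must be a \emph{natural} transformation of functors, not merely pointwise data; but this is automatic, as each arrow is manifestly a composite of natural transformations ($\eta, \epsilon, q, r$ and whiskerings thereof).
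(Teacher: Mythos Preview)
Your argument is correct and essentially identical to the paper's: both exhibit the zig-zag $id_\C \xleftarrow{q} Q_\C \to G R_\D F Q_\C$ with the right-pointing arrow being the derived unit, and verify it is a weak equivalence using the Quillen equivalence hypothesis (the paper phrases this via the adjoint-map criterion rather than the derived-unit criterion, but these are equivalent). The only cosmetic difference is that the paper dismisses the second zig-zag ``by reasons of symmetry'' while you write it out explicitly via the derived counit.
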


\begin{proof}
By reasons of symmetry we need only to construct the zig zag $G \circ R_{\D} \circ F \circ Q_{\C} \sim id_{\C}$. This is given by the obvious functors
\[id_{\C} \gets Q_{\C} \to G \circ F \circ Q_{\C} \to G \circ R_{\D} \circ F \circ Q_{\C} \]
where the natural transformation $Q_{\C} \to id_{\C}$ is of course given by w.e.s, while the composite natural transformation $Q_{\C} \to G \circ R_{\D} \circ F \circ Q_{\C}$ is given by w.e.s since it is adjoint to the obvious maps\footnote{Recall that a Quillen adjunction is a Quillen equivalence iff a map (with $X$ cofibrant and $Y$ fibrant) $F(X) \to Y$ is a w.e. iff the adjoint map $X \to R(Y)$ is.} $F \circ Q_{\C} \to R_{\D} \circ F \circ Q_{\C}$.

\end{proof}

\begin{corollary}
Suppose given a Quillen equivalence \[F\colon \C \rightleftarrows \D \colon G\]
as before and $(E,W_E)$ a category with w.e.s. Then, when defined, the homotopy categories of homotopy functors $Ho^h(E,\C)$ and $Ho^h(E,\D)$ are equivalent.

\end{corollary}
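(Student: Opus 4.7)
The plan is to exhibit the equivalence explicitly via post-composition with the homotopy-inverse pair of functors constructed in the preceding proposition, reducing the problem to whiskering the natural weak equivalence zig-zags already produced there.

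First I would observe that, since $F \circ Q_{\C}$ is homotopical (a left Quillen functor applied to cofibrant replacements), post-composition with it sends any homotopy functor $H \colon E \to \C$ to the homotopy functor $F \circ Q_{\C} \circ H \colon E \to \D$, and natural weak equivalences between such composites are preserved pointwise. Hence post-composition descends to a well-defined functor $(F \circ Q_{\C})_* \colon Ho^h(E, \C) \to Ho^h(E, \D)$, and symmetrically $(G \circ R_{\D})_* \colon Ho^h(E, \D) \to Ho^h(E, \C)$. These are the candidate inverse equivalences.

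Next, for any homotopy functor $H \colon E \to \C$, I would whisker the zig-zag
\[id_{\C} \gets Q_{\C} \to G \circ F \circ Q_{\C} \to G \circ R_{\D} \circ F \circ Q_{\C}\]
from the previous proposition on the right with $H$. The result is a zig-zag of pointwise weak equivalences in $\C^E$ between $H$ and $(G \circ R_{\D})_* (F \circ Q_{\C})_* H$, natural in $H$. Upon passage to $Ho^h(E, \C)$ this becomes a natural isomorphism $id \cong (G \circ R_{\D})_* \circ (F \circ Q_{\C})_*$; the symmetric argument on $\D$ furnishes the other natural isomorphism, completing the equivalence.

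The main obstacle here is less the argument itself than the phrase ``when defined'' in the statement: one needs a working definition of $Ho^h(E, \C)$ in which natural pointwise weak equivalences of homotopy functors are inverted, so that the whiskered zig-zags genuinely yield isomorphisms. Under any such reasonable definition --- for instance, the localization of the full subcategory of homotopy functors inside $\C^E$ at the pointwise weak equivalences, with whatever size-theoretic care is needed --- the plan above goes through essentially by whiskering alone, with no further model-categorical input beyond what has already been established in the preceding proposition.
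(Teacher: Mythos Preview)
Your proposal is correct and matches the paper's intent: the corollary is stated in the paper without proof, as an immediate consequence of the preceding proposition, and your argument---post-composing with the homotopy-inverse pair $F\circ Q_{\C}$ and $G\circ R_{\D}$ and whiskering the zig-zags---is exactly the natural way to spell out that deduction. Your closing remark about the phrase ``when defined'' is also apt; the paper is deliberately noncommittal about the precise construction of $Ho^h(E,\C)$, and your reading of what is needed for the argument to go through is the right one.
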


\section{Goodwillie calculus: polynomial functors}\label{polyfunct}

In this section we define the generalized concepts of {\bf polynomial functor} and construct the universal polynomial approximations $P_n F$ to a homotopy functor $F$. All definition and results are reasonably straightforward generalizations of those of Goodwillie in \cite{Goodwillie}.

\subsection{Cartesian cubes and total homotopy fibers.}

We first set some notation. 

For $S$ a finite set, denote by $\P(S)$ its poset of subsets, and additionally define $\P_i(S) = \{ A \in \P(A) \colon \abs{A} \geq i\}$. Of particular importance will be $\P_1(S)=\P(S) - \{\emptyset\}$.\footnote{This does NOT match Goodwillie's notation.}

Notice that all of these categories are have finite nerves, and hence are both direct and inverse categories (Definition \ref{dirinv}).

\begin{definition}
An {\bf $n$-cube} in a category $C$ is a functor $\X_{\bullet} \colon \mathcal{P}(S)\to C$, where $|S|=n$.

$\X$ is said to be {\bf hocartesian} if the canonical map \[\hocmp(\X) \colon \mathcal{X}_\emptyset \to \holim_{T \in \P_1(S)}\mathcal{X}_T\] is a w.e.\footnote{We now use the traditional notations $\holim$ and $\hocolim$ for the derived functors $\mathbb{R} \varprojlim$ and $\mathbb{L} \varinjlim$ introduced in \ref{cofgen}.}. We call this map the {\bf homotopy comparison map}, and denote it by $\hocmp(\X)$, or just $\hocmp$ when this would not cause confusion.

Furthermore, $\mathcal{X}$ is said to be {\bf strongly hocartesian} if the restrictions $\mathcal{X}|_{\mathcal{P}(T)}$ are hocartesian for any subset $T \subset S$ with $|T| \geq 2$.

There are also obvious dual definitions of {\bf hococartesian} and {\bf strongly hococartesian} $n$-cubes.
\end{definition}

\begin{remark}
Note that hocartesianness is a homotopical property of the cube $\X$, as so is the map $\mathcal{X}_\emptyset \to \holim_{T \in \P_1(S)}\mathcal{X}_T$ (which we regard as a map in $Ho(\C)$). Furthermore, that map can be constructed in the following way: take $\bar{\X}$ an injective fibrant replacement of $\X$. Then $\bar{\X}|_{\P_1(S)}$ is itself injective fibrant, since $\P_1(S)$ is a terminal subposet of $\P(S)$ (see Lemma \ref{restprop}), so that $\lim_{\P_1(S)}\bar{\X}$ is indeed a $\holim$, and the intended map is then given by $\bar{\mathcal{X}}_\emptyset \to \lim_{T \in \P_1(S)}\bar{\mathcal{X}}_T$.
\end{remark}

\begin{remark}
Notice that any strongly hocartesian cube $\X$ is determined by its restriction to $\P_{n-1}(S)$. Indeed, consider $\bar{\X}$ its injective fibrant replacement. Then its restriction $\bar{\X}|_{\P_{n-1}(S)}$ is itself injective fibrant, and it trivially follows that its reextension (i.e., the right Kan extension) $\bar{\X}'$ to $\P(S)$ is itself injective fibrant and the unit map $\bar{\X} \to \bar{\X'}$ is a pointwise equivalence.


\end{remark}

We now turn to the issue of defining the total fiber of a cube, and proving its iterative properties. Suppose $\C$ is now a pointed model category with zero object $\**$.

\begin{definition}
Consider the Quillen adjunction
\[\C \rightleftarrows \C^{\downarrow}\]
(where $\C^{\downarrow}$ has the injective model structure) with left adjoint $X \mapsto (X \to *)$ and right adjoint $(X\to Y) \mapsto \fiber(X \to Y)$.

We then denote by $\hofiber$ the derived functor $\mathbb{R} \fiber \colon Ho(\C^{\downarrow}) \to Ho(\C)$.

More generally, consider the Quillen adjunctions:
\[\C \rightleftarrows \C^{\P(S)}\]
(where $\C^{\P(S)}$ has the injective model structure) with left adjoint $X \mapsto (\emptyset \mapsto X; else \mapsto *)$ and right adjoint $\X \mapsto \fiber(X_\emptyset \to \lim_{T \in \P_1(S)} \X_T)$.

We now denote by $tothofiber$ the derived functor $Ho(\C^{\P(S)}) \to Ho(\C)$.
\end{definition}

\begin{remark}
Though not immediately obvious, our definition of $\hofiber$ does match the obvious alternate candidate definition: $\holim(X \to Y \gets *)$. This follows, for instance, from Proposition A.2.4.4 in \cite{Topos}. That the analogous result also holds for the higher dimensional variants can be proven by combining the following result with an induction argument.

\end{remark}

We now show that the $tothofiber$ can be computed iteratively.

\begin{proposition}{\bf Iterative definition of the total hofiber}\label{itertotfib}

The following diagram commutes up to a natural isomorphism:

\begin{equation}\xymatrix{
Ho(\C^{\P(S\amalg S' \amalg W)}) \ar[dr]_{\hofiber_{S\amalg S'}}\ar[r]^{\hofiber_S} & Ho(\C^{\P(S'\amalg W)}) \ar[d]^{\hofiber_{S'}}\\
& Ho(\C^{\P(W)}) 
}\label{homcom2}\end{equation}

\end{proposition}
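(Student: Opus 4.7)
The plan is to realize each of the three $\hofiber$ functors in the diagram as the right derived functor of a right Quillen functor between injective model structures, and then deduce the iterative identity from uniqueness of adjoints on the homotopy categories. No homotopical input beyond the formalism already set up for the total fiber should be needed.

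First I would generalize the Quillen adjunction used in the definition of $\hofiber$ to the parametrized setting: for disjoint finite sets $S,U$ there is a Quillen adjunction between injective model structures
\[
L_S \colon \C^{\P(U)} \rightleftarrows \C^{\P(S \amalg U)} \colon \fiber_S,
\]
where $L_S(\mathcal{Y})$ has value $\mathcal{Y}(V)$ at $(\emptyset_S, V)$ and value $*$ at every other position, and $\fiber_S(\X)(V) = \fiber(\X(\emptyset_S, V) \to \lim_{T \in \P_1(S)} \X(T,V))$. This is a Quillen pair by a pointwise check; alternatively, under the canonical identification $\P(S \amalg U) \cong \P(S) \times \P(U)$ it is obtained by pointwise extension to $\P(U)$ of the original adjunction of the definition of $\hofiber$. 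Specialized to the three cases of interest, this identifies $\hofiber_S$, $\hofiber_{S'}$ and $\hofiber_{S \amalg S'}$ with the right derived functors $\mathbb{R}\fiber_S$, $\mathbb{R}\fiber_{S'}$, $\mathbb{R}\fiber_{S \amalg S'}$.

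The key strict computation is then the equality $L_S \circ L_{S'} = L_{S \amalg S'}$ of underlying left adjoints: starting from $\Z \in \C^{\P(W)}$ both compositions produce the cube on $\P(S \amalg S' \amalg W)$ with value $\Z(W')$ at $(\emptyset_S, \emptyset_{S'}, W')$ and value $*$ everywhere else. Since the composition of left Quillen functors is left Quillen, and since Ken Brown's lemma (applied to the cofibrant object $L_{S'}(Q\X)$) gives $\mathbb{L}(L_S \circ L_{S'}) \cong \mathbb{L}L_S \circ \mathbb{L}L_{S'}$, we conclude that $\mathbb{L}L_{S \amalg S'} \cong \mathbb{L}L_S \circ \mathbb{L}L_{S'}$ in the relevant homotopy categories. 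Taking right adjoints, the left side has right adjoint $\hofiber_{S \amalg S'}$ while the right side has right adjoint $\hofiber_{S'} \circ \hofiber_S$, so uniqueness of right adjoints supplies the required natural isomorphism.

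The only real obstacle I anticipate is the bookkeeping needed to verify that the injective model structure on $\C^{\P(S \amalg U)}$ agrees, under the identification $\P(S \amalg U) \cong \P(S) \times \P(U)$, with the injective model structure on $(\C^{\P(U)}_{inj})^{\P(S)}$, which is the analog for inverse categories and injective model structures of the observation used in the second part of Proposition \ref{restprop}. Once that compatibility is established everything else is formal manipulation of Quillen adjunctions.
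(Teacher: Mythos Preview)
Your argument is correct and sets up exactly the same Quillen adjunctions the paper uses. The only difference is which side of the adjunction you verify the strict identity on: the paper checks $\fiber_{S \amalg S'} = \fiber_{S'} \circ \fiber_S$ directly on the right adjoints (after observing that a single injective fibrant replacement at the start suffices, since each $\fiber_S$ is right Quillen and hence preserves injective fibrants), whereas you check $L_S \circ L_{S'} = L_{S \amalg S'}$ on the left adjoints and then pass to right adjoints by uniqueness. Both routes are equally short; the paper's version has the mild advantage of also making explicit why no intermediate fibrant replacement is needed when one actually computes these functors, which is used later.

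One small comment: the ``obstacle'' you flag about identifying the injective structure on $\C^{\P(S\amalg U)}$ with the iterated injective structure on $(\C^{\P(U)}_{\mathrm{inj}})^{\P(S)}$ is not really needed for your argument. You can verify directly that $L_S$ is left Quillen for the injective structures: injective cofibrations are pointwise cofibrations, $L_S$ is the identity on some entries and constant at $\ast$ on the rest, and $\ast$ is cofibrant since $\C$ is pointed. The same check handles trivial cofibrations, and the composability $\mathbb{L}L_S \circ \mathbb{L}L_{S'} \cong \mathbb{L}(L_S\circ L_{S'})$ only needs that $L_{S'}$ sends cofibrants to cofibrants, which is immediate for the same reason.
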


\begin{proof}
Notice first that we have obvious Quillen adjunctions (for the injective model structures)
\[\C^{\P(W)} \rightleftarrows \C^{\P(S \amalg W)}\]
with the right adjoint given by $totfiber_S$, the total fiber in the $S$ direction (that this is a Quillen equivalence is clear from looking at the left adjoints).

It then follows that if we compute the paths in \eqref{homcom2} by first performing an injective fibrant replacement, then no further replacements are needed, and one is left with proving that $\fiber_{S \amalg S'} = \fiber_{S'} \circ \fiber_{S}$. But this is obvious: mapping into $\fiber_{S \amalg S'}(\X)$ is the same as giving a map to $\X_\emptyset$ which restricts to the (uniquely determined) zero maps on the other $\X_T$, and $\fiber_{S'} \circ \fiber_{S}(\X)$ clearly satisfies that same universal property.

\end{proof}

The previous result can also be viewed as a consequence/particular case of the following:

\begin{proposition}{\bf Iterative definition of the homotopy comparison map}

The following diagram commutes up to a natural isomorphism:

\begin{equation}\xymatrix{
Ho(\C^{\P(S\amalg S' \amalg W)}) \ar[dr]_{\cmp_{S\amalg S'}}\ar[r]^{\cmp_S} & Ho(\C^{\P(\{\**_S\}\amalg S'\amalg W)}) \ar[d]^{\cmp_{\{\**_S\} \amalg S'}}\\
& Ho(\C^{\P(\{\**_{S \amalg S'}\} \amalg W)}) 
}\label{compmapcom}\end{equation}

\end{proposition}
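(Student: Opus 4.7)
The plan is to follow the strategy of Proposition \ref{itertotfib}: identify each $\cmp$ functor as a derived right Kan extension along a poset map, then exploit that such Kan extensions compose up to natural isomorphism.

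Using the identification $\P(A\amalg B)\cong\P(A)\times\P(B)$, I would define the poset map $f_S\colon \P(S)\to\P(\{\**_S\})$ that sends $\emptyset$ to $\emptyset$ and every nonempty subset of $S$ to $\{\**_S\}$. For any set $V$ disjoint from $S$, the pullback $(f_S\times\mathrm{id}_{\P(V)})^*$ preserves pointwise (hence injective) cofibrations and pointwise weak equivalences, so it is left Quillen for the injective model structures, with right adjoint $\mathrm{Ran}_{f_S\times\mathrm{id}}$. A direct computation of the comma categories $((\emptyset,U)\downarrow f_S\times\mathrm{id})$ and $((\{\**_S\},U)\downarrow f_S\times\mathrm{id})$ shows that, for injective fibrant $\X$, the strict Ran evaluates at $(\emptyset,U)$ to $\X(\emptyset,U)$ and at $(\{\**_S\},U)$ to $\lim_{T\in\P_1(S)}\X(T,U)$ (the comma collapses onto $\P_1(S)$ after contracting the $\P(V\setminus U)$ factor by its initial object). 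By Proposition \ref{restprop}, these strict limits coincide with the intended holims, so $\cmp_S=\mathbb{R}\mathrm{Ran}_{f_S\times\mathrm{id}_{\P(S'\amalg W)}}$, and the analogous identifications give $\cmp_{\{\**_S\}\amalg S'}=\mathbb{R}\mathrm{Ran}_{f_{\{\**_S\}\amalg S'}\times\mathrm{id}_{\P(W)}}$ and $\cmp_{S\amalg S'}=\mathbb{R}\mathrm{Ran}_{f_{S\amalg S'}\times\mathrm{id}_{\P(W)}}$.

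Next I would verify directly that
\[f_{\{\**_S\}\amalg S'}\circ(f_S\times\mathrm{id}_{\P(S')})=f_{S\amalg S'}\]
as poset maps $\P(S)\times\P(S')\to\P(\{\**_{S\amalg S'}\})$: both composites send $(\sigma,\sigma')$ to $\emptyset$ iff $\sigma=\sigma'=\emptyset$, and to $\{\**_{S\amalg S'}\}$ otherwise. Strict right Kan extensions along composable functors compose, so $\mathrm{Ran}_{f_{\{\**_S\}\amalg S'}\times\mathrm{id}}\circ\mathrm{Ran}_{f_S\times\mathrm{id}}=\mathrm{Ran}_{f_{S\amalg S'}\times\mathrm{id}}$. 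To pass to the derived statement I would argue, exactly as in the proof of Proposition \ref{itertotfib}, that each $\mathrm{Ran}$ above is right Quillen for the injective model structures and therefore preserves injective fibrancy; so after a single injective fibrant replacement of $\X$ no further replacements are needed, and both composites around the square of \eqref{compmapcom} reduce to $\mathrm{Ran}_{f_{S\amalg S'}\times\mathrm{id}}$ applied to this fibrant replacement, establishing the desired natural isomorphism.

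The main obstacle is the first step, namely correctly identifying the comma categories in the Kan extension formula so that the value at $(\{\**_S\},U)$ really is $\lim_{\P_1(S)}\X(-,U)$ rather than $\lim_{\P(S)}\X(-,U)$ (which would trivialize via the initial object $\emptyset$). This relies on the sharp description of $f_S$ and on the fact that $\P(\{\**_S\})$ has exactly one nontrivial morphism, so that existence of $\{\**_S\}\to f_S(T)$ forces $T\neq\emptyset$. Once the Kan extension description of $\cmp_S$ is secured, the remainder of the argument is formal and mirrors Proposition \ref{itertotfib}.
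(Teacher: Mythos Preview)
Your proposal is correct and follows essentially the same route as the paper: identify $\cmp_S$ as the right Quillen adjoint (here, a right Kan extension) to pullback along an explicit poset map, reduce to strict limits after a single injective fibrant replacement, and then verify the composition identity at the level of the left adjoints. The paper's proof is terser—it simply asserts the Quillen adjunction with right adjoint $\cmp_S$ and says the strict identity $\cmp_{S\amalg S'}=\cmp_{\{\**_S\}\amalg S'}\circ\cmp_S$ ``is probably clearer by looking at the left adjoints, which are manifestly equal''—whereas you carry out explicitly the comma-category computation and the check $f_{\{\**_S\}\amalg S'}\circ(f_S\times\mathrm{id})=f_{S\amalg S'}$ that the paper leaves implicit.
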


\begin{proof}
As in the previous proposition we have obvious Quillen adjunctions (for the injective model structures)
\[\C^{\P(\{\**_S\} \amalg W)} \rightleftarrows \C^{\P(S \amalg W)}\]
with the right adjoint given by $\cmp_S$, so again it suffices to compare the results obtained by first performing an injective fibrant replacement and then applying the levelwise constructions. Again it is easy to check that $\cmp_{S \amalg S'} = \cmp_{\{\**_S\}S'} \circ \cmp_{S}$ by appealing to the universal property (this is probably clearer by looking at the left adjoints, which are manifestly equal).
\end{proof}


\begin{definition}
Let $X$ be any object in $\C$ a {\bf pointed simplicial } cofibrantly generated model category. Recall that $\C$ is then a $SSets_{\**}$-model category.

Define $C(X)= X \wedge (\Delta^1,\{0\})$.

There is a standard map $X \xrightarrow{c_X} C(X)$ induced by $(\{0,1\},\{0\}) \to (\Delta^1,\{0\})$.
\end{definition}



\begin{remark}

Note that, when $X$ is cofibrant, then $C(X) \sim \**$, and the standard map $X \to C(X)$ is a cofibration.
\end{remark}

\begin{definition}
\label{X X}
Denote by $\X_X$ the left Kan extension of the diagram
\begin{equation}\xymatrix{& & X \ar[drr]\ar[dr] \ar[dl] \ar[dll]& & \\ C(X) & C(X) & \dotsc & C(X) & C(X) \\
}\label{X cube}\end{equation}

\end{definition}

\begin{remark}

Notice that, when $X$ is cofibrant, then \eqref{X cube} is a cofibrant $\P_{\leq 1}(S)$ diagram, so that $\X_X$ is a strongly cocartesian cube.

More generally, when $X$ not cofibrant, we will also think of $\X_{Q(X)}$ as the strongly cartesian cube associated to $X$.

\end{remark}

\begin{proposition}\label{comcones}
Suppose that either 

The functors $\P(S) \times \P(S) \to \C$ given by $(U,V) \mapsto \X_{\X_X(U)}(V)$ and $(U,V) \mapsto \X_{\X_X(V)}(U)$ are naturally isomorphic. Furthermore, this natural isomorphism is also natural in $X$.
\end{proposition}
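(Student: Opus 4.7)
The plan is to factor the construction $\X_X$ through the smash product with a fixed diagram of pointed simplicial sets, reducing the iterated construction to a triple smash product whose symmetry is then formal. First, I identify the generating $\P_{\leq 1}(S)$-diagram \eqref{X cube} as the image under the tensoring $X\wedge-\colon SSet_*\to \C$ of the fixed pointed simplicial diagram $D_S$ given by $D_S(\emptyset) = S^0 = (\{0,1\},\{0\})$, $D_S(\{s\}) = (\Delta^1,\{0\})$, with structure map the pointed inclusion $S^0\to(\Delta^1,\{0\})$ sending $1\mapsto 1$. This uses only the identifications $X = X\wedge S^0$ and $C(X) = X\wedge(\Delta^1,\{0\})$.

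Since the tensoring $X\wedge-$ is a left adjoint (with right adjoint the cotensor $Map_\C(X,-)$), it preserves all colimits, and in particular commutes with the left Kan extension along $\P_{\leq 1}(S)\hookrightarrow \P(S)$ that defines $\X_X$. This yields a natural isomorphism $\X_X(U)\cong X\wedge K_S(U)$, where $K_S\colon \P(S)\to SSet_*$ is the left Kan extension of $D_S$; crucially, $K_S$ is independent of $X$ and depends only on the indexing set $S$. Iterating this formula and using associativity of the $SSet_*$-tensoring gives
\[ \X_{\X_X(U)}(V)\ \cong\ \X_X(U)\wedge K_S(V)\ \cong\ X\wedge(K_S(U)\wedge K_S(V)),\]
and symmetrically $\X_{\X_X(V)}(U)\cong X\wedge (K_S(V)\wedge K_S(U))$.

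The symmetry isomorphism of the smash product in $SSet_*$ now supplies a natural isomorphism $K_S(U)\wedge K_S(V)\cong K_S(V)\wedge K_S(U)$ of bifunctors $\P(S)\times \P(S)\to SSet_*$; smashing with $X$ (and composing with the associators above) produces the desired natural isomorphism $\X_{\X_X(U)}(V)\cong \X_{\X_X(V)}(U)$ as bifunctors $\P(S)\times \P(S)\to \C$. Naturality in $X$ is automatic, since every map in the chain is obtained by applying $X\wedge-$ (or whiskering by it) to a fixed morphism of pointed simplicial sets.

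The argument is essentially formal once the smash-product factorization is in place, so the step that warrants most care is the identification of \eqref{X cube} with $X\wedge D_S$ and the commutation of $X\wedge-$ with the defining left Kan extension. Coherence of the iterated associators and of the symmetry swap is then standard, given the symmetric monoidal $SSet_*$-tensoring on a pointed simplicial model category.
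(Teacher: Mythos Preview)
Your proof is correct and follows essentially the same approach as the paper: both reduce the claim to the symmetry isomorphism for the smash product in $SSet_*$, using that $\X_X$ is built by left Kan extension of a diagram obtained by smashing $X$ with fixed pointed simplicial sets. The only cosmetic difference is that the paper restricts to $\P_{\leq 1}(S)\times\P_{\leq 1}(S)$ and checks the swap on $C(C(X))$ directly, whereas you package the whole cube as $X\wedge K_S$ and invoke symmetry globally; the underlying argument is the same.
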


\begin{proof}

Since $C(X)$ commutes with colimits, and $\X_\bullet$ is constructed via a left Kan extension of its values on $\P_{\leq 1}$, the result will follow if we prove it for the restriction of the functors to $\P_{\leq 1}(S) \times \P_{\leq 1}(S)$.

This then amounts to producing an automorphism of $C(C(X))$ which swaps $c_{C(X)}$ and $C(c_X)$. But $C(C(X))$  is just $(X\wedge (\Delta^1,\{0\}))\wedge (\Delta^1,\{0\})$, and the required maps induced by the inclusions $(\{0,1\},\{0\}) \to (\Delta^1,\{0\})$ into each of the product terms, so the required automorphism just follows from the symmetry isomorphisms for the monoidal structure in $SSet_{\**}$.

\end{proof}





\subsection{Excisive functors}

We can now finally define polynomial/excisive functors.

\begin{definition}
A homotopical functor $F \colon \C \to \D$ is said to be {\bf $d$-excisive} if for any $\X$ a strongly hococartesian $(d+1)$-cube in $\mathcal{C}$ we have that $F(\X)$ is a hocartesian cube in $\D$.
\end{definition}

\begin{remark}
Notice that, since hocartesianness and strong hococartesianness of cubes are homotopical properties, the excisiveness of an homotopy functor $F$ is entirely determined by looking at the functors
\[Ho(\C^{\P(S)}) \xrightarrow{Ho(F^{\P(S)})} Ho(\D^{\P(S)})\]
which we denote by $hF_S$, for short.
\end{remark}

We know prove some basic properties of excisive functors.

\begin{proposition}
If $F$ is $d$-excisive, then it is also $d'$-excisive for $d' \geq d$.
\end{proposition}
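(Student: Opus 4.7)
The plan is to reduce by induction to the case $d' = d+1$. So let $\X \colon \P(S) \to \C$ be a strongly hococartesian cube with $|S| = d+2$; the goal is to show $F(\X)$ is hocartesian.

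Pick $s_0 \in S$, set $S' = S \setminus \{s_0\}$, and decompose $\X$ as a morphism $\X^0 \to \X^1$ of $(d+1)$-cubes on $\P(S')$, where $\X^0(T) = \X(T)$ and $\X^1(T) = \X(T \cup \{s_0\})$. That $\X^0$ is strongly hococartesian is immediate, since its sub-cubes $\X^0|_{\P(T)} = \X|_{\P(T)}$ are hococartesian by hypothesis on $\X$. The main obstacle is showing that $\X^1$ is strongly hococartesian, since its sub-cubes $\X^1|_{\P(T)} \colon U \mapsto \X(U \cup \{s_0\})$ are not among those postulated directly by the definition. I would argue via the multi-pushout structure underlying Definition \ref{X X}: dually to the remark on strongly hocartesian cubes, a strongly hococartesian cube is determined as the left Kan extension of its restriction to $\P_{\leq 1}$, equivalently as the iterated homotopy pushout of the ``star'' $X_\emptyset \to X_{\{i\}}$ ($i \in S$). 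Under this description $\X^1(T)$ is the $\hocolim$ of the star at $X_\emptyset$ with arms $X_{\{s_0\}}$ and $X_{\{i\}}$ ($i \in T$); using the hococartesianness of each 2-face $\X|_{\P(\{s_0,i\})}$ to identify $X_{\{s_0,i\}}$ with the homotopy pushout of $X_{\{s_0\}} \leftarrow X_\emptyset \to X_{\{i\}}$, this multi-pushout reassembles as the $\hocolim$ of the star at $X_{\{s_0\}}$ with arms $X_{\{s_0,i\}}$ ($i \in T$), exhibiting $\X^1$ as strongly hococartesian.

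With $\X^0, \X^1$ both strongly hococartesian $(d+1)$-cubes, the $d$-excisiveness of $F$ gives that $F(\X^0)$ and $F(\X^1)$ are hocartesian, equivalently that $tothofiber_{S'}(F(\X^i)) \simeq \ast$ for $i = 0, 1$. Applying Proposition \ref{itertotfib} yields $tothofiber_S(F(\X)) \simeq tothofiber_{\{s_0\}} \bigl( tothofiber_{S'}(F(\X)) \bigr)$. Viewing $F(\X)$ as a $\{s_0\}$-cube (i.e., a morphism) in $\D^{\P(S')}$, the inner operator produces the map $tothofiber_{S'}(F(\X^0)) \to tothofiber_{S'}(F(\X^1))$ between two objects weakly equivalent to $\ast$, whose hofiber is therefore contractible. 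This shows $F(\X)$ is hocartesian, completing the induction step.
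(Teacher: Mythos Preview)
Your overall strategy matches the paper's: reduce to $d' = d+1$, split a $(d+2)$-cube along one direction into two $(d+1)$-cubes, apply $d$-excisiveness to each face, and reassemble. Your care in verifying that the ``top'' face $\X^1$ is strongly hococartesian is a genuine addition: with the paper's definition (which only asks for hococartesianness of sub-cubes $\X|_{\P(T)}$ based at $\emptyset$), this is \emph{not} literally ``by definition'', and your argument via iterated homotopy pushouts is the correct way to see it.

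There is, however, a real gap in your final step. You pass from ``$F(\X^i)$ is hocartesian'' to ``$tothofiber_{S'}(F(\X^i)) \simeq \ast$'' and back again, and you conclude $F(\X)$ hocartesian from $tothofiber_S(F(\X)) \simeq \ast$. The forward implication is fine, but the reverse fails in a general pointed model category $\D$: contractibility of the homotopy fiber of the comparison map $\hocmp(\X)$ does not force $\hocmp(\X)$ to be a weak equivalence (think of $\ast \to S^0$ in pointed spaces, or any map whose target has multiple components). The paper does not assume $\D$ is stable at this point, and in fact does not even assume $\D$ pointed, so $tothofiber$ may not be available at all.

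The fix is to run exactly your argument with the comparison map in place of the total hofiber, invoking Proposition~\ref{compmapcom} rather than Proposition~\ref{itertotfib}. Applying $\cmp_{S'}$ to $F(\X)$ yields a square (a $\{*_{S'}\}\amalg\{s_0\}$-cube) whose two arrows in the $*_{S'}$-direction are $\hocmp_{S'}(F(\X^0))$ and $\hocmp_{S'}(F(\X^1))$, both weak equivalences by $d$-excisiveness; a square with two opposite edges weak equivalences is hocartesian, so $\cmp_{\{*_{S'}\}\amalg\{s_0\}}$ of it is a weak equivalence, and by Proposition~\ref{compmapcom} this is $\hocmp_S(F(\X))$. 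This is precisely the paper's proof.
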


\begin{proof}
It suffices to prove that $d$-excisiveness implies $d+1$-excisiveness. 
Write a set $\bar{S}= S \amalg \{ \** \}$ for a set of cardinality $d+1$, so that an $\bar{S}$-cube $\X$ can be viewed as a map of $S$-cubes $\X^b \to \X^t$. By definition $\X^b$ and $\X^t$ are strongly hococartesian, so that by hypothesis $F(\X^b)$ and $F(\X^t)$ are cartesian. But then so must be $\X$ by applying Proposition \ref{compmapcom}. 
\end{proof}

\begin{definition}
Suppose $\C$ is pointed. A sequence of functors $F\to G \to H$ is a hofiber sequence if the composite is the $\**$ functor, and $F$ is pointwise the homotopy fiber of $G \to H$. Or in other words, if the squares
\begin{equation}
\xymatrix{
F(c) \ar[d]\ar[r] & \** \ar[d]\\
G(c) \ar[r] & H(c)
}\label{hofiber}
\end{equation}
are hocartesian.
\end{definition}

\begin{remark}
Given a hofiber sequence $F\to G \to H$, consider 
\begin{equation}
\xymatrix{
F \ar[d]\ar[r]& G \ar[d]\ar[r]& H \ar[d]\\
\bar{F} \ar[r]& \bar{G} \ar[r]& \bar{H}}
\end{equation}
where $\bar{G} \to \bar{H}$ is a fibrant replacement (pointwise in the arrow category) of $G \to H$ and $\bar{F}=\fiber(\bar{G}\to \bar{H})$. It then follows it must be $F \xrightarrow{\sim} \bar{F}$, so that one concludes that the hofiber of a map of functors is well defined up to a zig zag of natural w.e.s.
\end{remark}

\begin{proposition}
If $F\to G \to H$ is a hofiber sequence of homotopy functors with $G$ and $H$ both $d$-excisive, then so is $F$.
\end{proposition}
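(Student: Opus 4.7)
The plan is to take a strongly hococartesian $(d+1)$-cube $\X$ in $\C$ and show that $F(\X)$ is hocartesian in $\D$. The hypothesis that $F\to G\to H$ is a hofiber sequence of homotopy functors means precisely that for each $T \in \P(S)$ the object $F(\X)_T$ is naturally equivalent to the homotopy fiber of $G(\X)_T \to H(\X)_T$. The problem therefore reduces to the following general statement: if $F' \to G' \to H'$ is a pointwise hofiber sequence of $(d+1)$-cubes with $G'$ and $H'$ hocartesian, then $F'$ is hocartesian.

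The key step will be that homotopy fibers commute with homotopy limits. Using the injective model structure on the arrow category of $\D^{\P_1(S)}$ available from Section~\ref{polyfunct}, I would take an injective fibrant replacement of $G' \to H'$, so that both the pointwise strict fiber and the strict $\P_1(S)$-limit compute the corresponding derived functors. Either a direct check of the universal property or an application of Proposition~\ref{itertotfib} (viewing $\hofiber$ as a total hofiber in a singleton direction) then yields a natural equivalence
\[\holim_{T \in \P_1(S)} F'(T) \simeq \hofiber\bigl( \holim_{T \in \P_1(S)} G'(T) \to \holim_{T \in \P_1(S)} H'(T)\bigr).\]
Under this identification, the comparison map $\hocmp(F')\colon F'_\emptyset \to \holim_T F'(T)$ is naturally the map of homotopy fibers induced by the commutative square whose horizontal arrows are $\hocmp(G')$ and $\hocmp(H')$. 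Since those are weak equivalences by the hocartesianness hypotheses on $G'$ and $H'$, and since $\hofiber$ preserves levelwise weak equivalences of arrows, $\hocmp(F')$ is itself a weak equivalence, proving $F(\X)$ hocartesian.

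The main obstacle I anticipate is making the commutation of $\hofiber$ with $\holim_{\P_1(S)}$ fully rigorous in the cofibrantly-generated-but-not-necessarily-simplicial setup of the paper. Conceptually this is transparent because both operations are right-derived from right adjoints and therefore commute, but one must handle the fibrant replacements carefully, most cleanly via the injective model structure approach used throughout Section~\ref{polyfunct}; everything else in the argument is bookkeeping around the resulting square of fiber sequences.
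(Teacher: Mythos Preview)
Your proposal is correct and follows essentially the same approach as the paper. The paper packages the argument by assembling $G(\X)\to H(\X)$ into an $(S\amalg\{*\})$-cube and invoking a commutative square of derived functors $\hofiber_*$ and $\cmp_S$ (established, exactly as you suggest, by first taking an injective fibrant replacement and then checking the strict identity $\cmp_S\circ\fiber_*=\fiber_*\circ\cmp_S$), but this is precisely your ``$\hofiber$ commutes with $\holim_{\P_1(S)}$'' step rephrased.
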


\begin{proof}
Given a $S$-cube $\X$, form the $S \amalg \{ \** \}$-cube $G(\X) \to H(\X)$.

Now consider the following diagram:

\begin{equation}\xymatrix{
Ho(\D^{\P(S \amalg \{ \** \})}) \ar[d]^{\hofiber_*}\ar[r]^{\cmp_S} & Ho(\D^{\P(\{\**_S, \** \})}) \ar[d]^{\hofiber_*}\\
Ho(\D^{\P(S)}) \ar[r]^{\cmp_S} & Ho(\D^{\P(\{ \**_S\})}) 
}\end{equation}

By repeating the techniques of the proofs of Propositions \ref{homcom2} and \ref{compmapcom} this diagram commutes up to natural isomorphism: namely, it suffices to perform the constructions by first choosing an injective fibrant replacement and then checking that $\cmp_S \circ \fiber_* = \fiber_* \circ \cmp_S$.

What we want to prove is then that $\cmp_S{F(\X)}$ is a w.e.. Since $F(\X)=\hofiber_*(G(\X) \to H(\X))$, this follows if we show $\hofiber_*(\cmp_S(G(\X)) \to H(\X))$ is a w.e.. But this is clear since, by the $d$-excisiveness hypothesis, $\cmp_S(G(\X))$ is a square for which two opposing arrows are w.e.s.
\end{proof}

\begin{lemma}\label{partexc}
Suppose $F$ is $d$-excisive, and that $\X_\bullet$ is a strongly hocartesian $e$-cube, with $e \geq d$. Then the map 
\[F(\X_\emptyset) \to \holim_{T \in\P_{e-d+1}} F(\mathcal{X}_T)\]
is a w.e.. 
\end{lemma}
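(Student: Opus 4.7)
I would prove this by induction on $e - d \geq 0$. For the base case $e = d$, the target becomes $\holim_{\P_1(S)} F(\X_T)$, and the claim is that $F$ carries a strongly hocartesian $d$-cube to a hocartesian cube. Since $d$-excisiveness directly concerns $(d+1)$-cubes, I would bridge the gap by extending $\X$ to a $(d+1)$-cube $\tilde{\X}$ via adjoining a ``trivial'' direction $\**$ in which all transition maps are identities (i.e.\ $\tilde{\X}_{T \cup \{\**\}} = \X_T$ with identity structure maps). This extended cube is strongly hococartesian (the new 2-faces are trivially so), so $d$-excisiveness yields that $F(\tilde{\X})$ is hocartesian. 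Restricting back and using the iterative description of the homotopy comparison map (Proposition \ref{compmapcom}) to relate $\holim_{\P_1(S \cup \{\**\})} F(\tilde{\X})$ to $\holim_{\P_1(S)} F(\X)$ would then give the base case.

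For the inductive step ($e \geq d + 1$), I would decompose $\X$ as a map of $(e-1)$-cubes $\X^b \to \X^t$ by singling out a coordinate $\** \in S$, setting $S' = S \setminus \{\**\}$. Both $\X^b$ and $\X^t$ inherit strong hocartesianness from $\X$ since all their 2-subfaces are 2-subfaces of $\X$. The inductive hypothesis applied to each gives $F(\X^b_\emptyset) \simeq \holim_{\P_{e-d}(S')} F(\X^b_T)$ and similarly for $\X^t$. I would then decompose the index poset $\P_{e-d+1}(S)$ into two strata according to whether $T$ contains $\**$: the stratum $\** \notin T$ reindexes as $\P_{e-d+1}(S')$ over $\X^b$, while the stratum $\** \in T$ reindexes (via $T \mapsto T \setminus \{\**\}$) as $\P_{e-d}(S')$ over $\X^t$. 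This yields a homotopy pullback description of $\holim_{\P_{e-d+1}(S)} F(\X)$ involving the three corners $\holim_{\P_{e-d+1}(S')} F(\X^b)$, $\holim_{\P_{e-d+1}(S')} F(\X^t)$, and $\holim_{\P_{e-d}(S')} F(\X^t)$, and combining this with the iterative form of the comparison map and the inductive hypotheses should allow one to identify the original map with the desired weak equivalence.

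The main obstacle I anticipate is an off-by-one issue in the indexing during the inductive step: the inductive hypothesis naturally produces $\holim$ over $\P_{e-d}(S')$, but the decomposition of $\P_{e-d+1}(S)$ also involves $\holim$ over $\P_{e-d+1}(S')$ applied to $\X^b$ and $\X^t$. Reconciling these requires exploiting the strong hocartesianness of $\X^b$ and $\X^t$ more carefully to identify different-level homotopy limits---essentially using that strongly hocartesian cubes enjoy right Kan extension properties allowing $\holim_{\P_k}$ and $\holim_{\P_{k+1}}$ of such cubes to coincide in the relevant range---together with Proposition \ref{compmapcom}'s iterative structure to slot these identifications into the pullback square. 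In the base case, extra care is needed to verify that the trivial extension really does produce a strongly hococartesian $(d+1)$-cube and that the subsequent restriction from $\holim_{\P_1(S \cup \{\**\})}$ back to $\holim_{\P_1(S)}$ is itself a weak equivalence, which should again reduce to the degeneracy of the added $\**$-direction.
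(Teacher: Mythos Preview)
Your base case contains a genuine gap. When you extend the $d$-cube $\X$ to a $(d+1)$-cube $\tilde{\X}$ by adjoining a direction $\**$ in which all transition maps are identities, the cube $F(\tilde{\X})$ is \emph{automatically} hocartesian, regardless of whether $F(\X)$ is. To see this, apply the iterative comparison map in the $\**$-direction first: since that direction consists of identities, the square produced by $\cmp_{\**}$ has both vertical arrows equal to the identity, hence is hocartesian for free, and so the full comparison map for $F(\tilde{\X})$ is a weak equivalence independent of what happens in the $S$-directions. Invoking $d$-excisiveness on $\tilde{\X}$ therefore buys you nothing, and you cannot deduce that $F(\X)$ is hocartesian. (Indeed, the $e=d$ instance of the lemma, read literally, is false: already for $d=1$ it would assert that a $1$-excisive functor sends every map to a weak equivalence. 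The genuine content begins at $e>d$, and your induction should be anchored there rather than at $e=d$.)

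The paper's argument is structurally different and avoids your coordinate-splitting entirely. It passes to an injective fibrant replacement $\mathcal{Y}$ of the $e$-cube $F(\X)$, forms the right Kan extension $\bar{\mathcal{Y}}$ of the restriction $\mathcal{Y}|_{\P_{e-d+1}(S)}$, and proves that the unit $\mathcal{Y}\to\bar{\mathcal{Y}}$ is a pointwise weak equivalence by \emph{downward} induction on $|T|$: at each vertex $T$ below the truncation level, the unit map is identified with $\hocmp$ of the sub-cube of $\mathcal{Y}$ based at $T$, and that sub-cube is hocartesian because its dimension exceeds $d$ and $F$ is $d'$-excisive for all $d'\geq d$. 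Evaluating at $T=\emptyset$ gives the result. This handles all the partial holimits simultaneously and sidesteps exactly the off-by-one reconciliation you flag in your own inductive step.
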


\begin{proof}
This being an homotopical property, we are free to replace $F(\X)$ by an injective fibrant replacement $\mathcal{Y}$, whose sub-cubes of dimension $\geq d$ are all hocartesian (by the previous result).

Now let $\bar{\mathcal{Y}}$ be the right Kan extension of the restriction $\mathcal{Y}|_{\P_{e-d+1}}$. $\bar{\mathcal{Y}}$ is still fibrant since both adjoints preserve fibrant objects (by Proposition \ref{restprop}). The unit map $\mathcal{Y} \to \bar{\mathcal{Y}}$ is tautologically a w.e. on the points in $\P_{e-d+1}$, but by induction it is also so on all other points, and the remaining maps can all be identified with $\cmp_S$ for some $S$ with $\abs{S}\geq d$.
But the map we want to check is a w.e. is precisely $\mathcal{Y}_\emptyset \to \bar{\mathcal{Y}}_\emptyset$, hence we are done.
\end{proof}

\begin{lemma}\label{prevlemma}
Let $A$ be a category covered by subcategories $\{A_s\}_{s \in S}$(i.e., any arrow in $A$ lies in some $A_s$. Note the $A_s$ are not assumed luff\footnote{A subcategory of $\C$ is called luff if it has the same set of objects.}), and suppose given a functor $F \colon A\to \mathcal{D}$.

Define an $S$-cube $\X$ by $T \mapsto \lim (F|_{\bigcap_{s \in T}(A_s)})$ (the empty intersection is taken to be all of $A$). Then $\X$ is cartesian.
\end{lemma}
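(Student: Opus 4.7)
The plan is to unwind the universal properties involved. The cone condition being verified is that the map $\X_\emptyset = \lim F \to \lim_{T \in \P_1(S)} \X_T$ is an isomorphism; I will show this by producing a mutual inverse directly from the covering hypothesis.

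First I would analyze what a morphism $X \to \lim_{T \in \P_1(S)} \X_T$ amounts to. By the universal property of the limit of the punctured cube, it is a compatible family of maps $X \to \X_T$ for all $\emptyset \neq T \subseteq S$. For a singleton $T=\{s\}$, the map $X \to \X_{\{s\}} = \lim F|_{A_s}$ is equivalently a cone on $F|_{A_s}$, i.e.\ a family $(X \to F(a))_{a \in A_s}$ natural in the arrows of $A_s$. For larger $T$, having the map $X \to \X_T = \lim F|_{\bigcap_{s\in T} A_s}$ compatible with all the projections $\X_T \to \X_{\{s\}}$ ($s \in T$) is the same as asking that the cones $(X \to F(a))_{a \in A_s}$ and $(X \to F(a))_{a \in A_{s'}}$ agree on objects in the intersection (and this automatically forces the higher $\X_T$ data, so no extra content comes from $|T| \geq 2$ beyond the compatibility on singletons).

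Next I would use the covering hypothesis to glue. Given such a compatible family of cones on each $F|_{A_s}$, I define a would-be cone $(c_a \colon X \to F(a))_{a \in A}$ by choosing, for each object $a$, some $s$ with $\mathrm{id}_a \in A_s$ and setting $c_a$ to be the corresponding projection. Well-definedness follows from the agreement on intersections applied to $\{s,s'\}$. Naturality in $f \colon a \to b$ is the key subtle point: since $f$ lies in some $A_s$ by the covering hypothesis, both $a$ and $b$ belong to $A_s$, so naturality can be verified inside the single cone on $F|_{A_s}$, where it holds by assumption. This produces a map $X \to \lim F = \X_\emptyset$. The two constructions are mutually inverse by a direct check against the universal properties, showing the comparison map is an iso.

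The only place that requires any care is ensuring that the naturality of the glued cone doesn't need arrows that straddle different $A_s$'s --- and this is exactly what the hypothesis that arrows (not merely objects) are covered guarantees. Once that observation is in hand, the rest is a bookkeeping exercise with universal properties; I would not expect any subtlety from the empty-intersection convention $\bigcap_{s \in \emptyset} A_s = A$, since it is built precisely to make $\X_\emptyset = \lim F$ fit the same pattern.
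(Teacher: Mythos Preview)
Your argument is correct and is exactly the approach the paper takes: the paper's proof consists of the single sentence ``This is simply a matter of diagram chasing to check that the universal properties of the terms in $\cmp(\X)$ match,'' and you have carried out precisely that diagram chase. The one point worth noting is your observation that the covering hypothesis on \emph{arrows} (not just objects) is what makes naturality of the glued cone go through---this is indeed the crux, and the paper leaves it implicit.
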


\begin{proof}
This is simply a matter of diagram chasing to check that the universal properties of the terms in $\cmp(\X)$ match.

\end{proof}


\begin{proposition}\label{Good3.4} 

Let $L\colon \mathcal{C}^d \to \mathcal{D}$ be a homotopy functor which is 1-excisive (separately) in each variable, and denote by $\Delta \colon \C \to \C^d$ the diagonal functor. Then $L \circ \Delta \colon \mathcal{C} \to \mathcal{D}$ is $d$-excisive. 
\end{proposition}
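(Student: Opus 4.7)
The plan is to prove the result by induction on $d$, with base case $d=1$ being the hypothesis itself. For the inductive step, I would reduce to a two-variable lemma: if $M\colon \C\times\C\to\D$ is $a$-excisive in its first variable and $b$-excisive in its second, then $M\circ\Delta_2$ is $(a+b)$-excisive. Granting this lemma, the inductive step follows by setting $M(X,Y) = L(X,Y,Y,\dots,Y)$. This $M$ is $1$-excisive in $X$ directly from the hypothesis on $L$, and by the inductive hypothesis applied to $L(X,-,\dots,-)\colon \C^{d-1}\to\D$ for each fixed $X$, the functor $M(X,-)$ is $(d-1)$-excisive. Applying the two-variable lemma with $a=1$, $b=d-1$ then yields that $L\circ \Delta_d = M\circ\Delta_2$ is $d$-excisive.

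To prove the two-variable lemma, I would fix a strongly hococartesian $(a+b+1)$-cube $\X\colon \P(S)\to\C$ and form the auxiliary bi-cube $\tilde M\colon \P(S)\times\P(S)\to\D$ given by $\tilde M(T_1,T_2) = M(\X(T_1),\X(T_2))$. The goal then reduces to showing that the diagonal cube $T\mapsto \tilde M(T,T)$ is hocartesian. The main input is that for each fixed $T_2$, the functor $M(-,\X(T_2))\colon\C\to\D$ is $a$-excisive, so applied to the strongly hococartesian $(a+b+1)$-cube $\X$, Lemma \ref{partexc} provides partial cartesianness of $\tilde M(-,T_2)$ in the $T_1$-direction; symmetrically in the $T_2$-direction. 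Combining these through commutation of homotopy limits (as in Proposition \ref{comholim}, applied to a suitable finite shape rather than a $\kappa$-filtered one) yields that the natural map from $\tilde M(\emptyset,\emptyset)$ into an appropriate ``rectangular'' homotopy limit over a subproduct of $\P(S)\times\P(S)$ is a weak equivalence.

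The remaining task is to identify this rectangular homotopy limit with $\holim_{T\in\P_1(S)}\tilde M(T,T)$, the target of the comparison map for $\tilde M\circ\Delta$. For this I expect to invoke Lemma \ref{prevlemma}, covering $\P_1(S)$ by subcategories $A_s = \{T : s\in T\}$ (one for each $s\in S$), computing the relevant intersections, and assembling the resulting cartesian $|S|$-cube so that its corners express the various rectangular homotopy limits as well as the diagonal one. A cofinality argument between the relevant indexing subcategories may also be needed to complete the identification. The iterative description of the comparison map from Proposition \ref{compmapcom} should also help in factoring the comparison of $\tilde M\circ\Delta$ into the two stages controlled by the separate excisiveness assumptions.

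The main obstacle I anticipate is this final identification step. The partial cartesianness of $\tilde M$ provided by Lemma \ref{partexc} is naturally packaged over a product of truncated posets, whereas the comparison map for $\tilde M\circ\Delta$ lives over $\P_1(S)$ via the diagonal embedding; reconciling the two requires careful combinatorial bookkeeping of the indexing categories and a cofinality-type argument applied through Lemma \ref{prevlemma}. Getting the dimension arithmetic right so that the partial excisiveness bounds $\P_{b+1}(S)$ and $\P_{a+1}(S)$ interact correctly with the diagonal restriction over $\P_1(S)$ is the delicate homotopy-theoretic heart of the proof.
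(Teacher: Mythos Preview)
Your inductive reduction to a two-variable lemma is sound, and that lemma is exactly the generalization the paper records in the Remark following the proposition. The paper itself does not induct: it handles all $d$ variables at once by working directly with the product poset $\P_{n-1}(S)^d$. Your route is a legitimate alternative, though slightly more roundabout, since after the induction you still have to prove essentially the same statement (for two variables with excisiveness degrees $(a,b)$) that the paper proves directly for $d$ variables with degrees $(1,\dots,1)$.

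The genuine gap is in your plan for the two-variable lemma. Applying Lemma~\ref{prevlemma} to $A=\P_1(S)$ with the cover $A_s=\{T:s\in T\}$ is tautological: for $T\neq\emptyset$ the intersection $\bigcap_{s\in T}A_s$ has $T$ as initial object, so its limit is just $\tilde M(T,T)$, and the resulting cartesian cube merely reproduces the comparison map you are trying to analyze. No rectangular limits ever appear in its corners, so this cannot bridge the diagonal and rectangular holims.

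The correct move---and this is what the paper does---is to apply Lemma~\ref{prevlemma} to the \emph{product} category $A=\P_{b+1}(S)\times\P_{a+1}(S)$, with $A_s=\{(T_1,T_2):s\in T_1\text{ and }s\in T_2\}$. The pigeonhole argument works because $T_1$ misses at most $a$ elements and $T_2$ at most $b$, so with $|S|=a+b+1$ some $s$ lies in both. The intersections $\bigcap_{s\in T}A_s$ are then the undercategories $(T,T)\downarrow A$, and after replacing $\tilde M$ by the right Kan extension $\bar{\tilde M}$ of its restriction to $A$ (which is a pointwise equivalence by Lemma~\ref{partexc} applied in each variable), the cube produced by Lemma~\ref{prevlemma} is precisely $\bar{\tilde M}\circ\Delta$. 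A fibrancy check, as in the paper's final paragraph, upgrades cartesian to hocartesian. (A minor point: you cite Proposition~\ref{comholim} for commuting the two holims, but that result concerns filtered hocolims versus finite holims; commuting two finite holims is a simpler fact, handled for instance via injective fibrant replacement.)
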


\begin{proof}
Consider $\X$ any strongly hococartesian $S$-cube, with $\abs{S}=d+1$.

Denote by $L(\X)^d$ the composite $\P(S)^d \xrightarrow{\X^{\times d}} \C^d \xrightarrow{L} \D$. We wish to show that $L(\X)^d \circ \Delta$ is a hocartesian cube, but by homotopy invariance this can equivalently be done after replacing $L(\X)^d$ by an injective fibrant multicube $\mathcal{Y}$. By retracing the steps in the proof of Lemma \ref{partexc} (and applying them inductively), one concludes that one might as well further replace $\mathcal{Y}$ by the right Kan extension $\bar{\mathcal{Y}}$ of its restriction to $\P_{n-1}(S)^d$.

We know apply Lemma \ref{prevlemma} to $\P_{n-1}(S)^d$: take $A_s = \{(T_1,\cdots,T_d) \in \P_{n-1}(S)^d \colon \underset{i}{\forall} s \in T_i\}$, or equivalently, $A_s$ is the undercategory $(\{s\},\cdots,\{s\}) \downarrow \P_{n-1}(S)^d $. It is easily seen that intersections $\bigcap_{s \in T}(A_s)$ with $T$ non empty are the undercategories $(T,\cdots, T) \downarrow \P_{n-1}(S)^d$, and that, by a pigeonhole argument $\bigcup_{s \in S}A_s = \P_{n-1}(S)^d$.

Hence the cube constructed by Lemma \ref{prevlemma} is precisely $\bar{\mathcal{Y}} \circ \Delta$, and we will be done provided this cube is also fibrant (so that the $limits$ are indeed $holimits$). This follows by checking that the adjunction $\D^{\P(S)} \rightleftarrows \D^{\P(S)^d}$ is Quillen, and this is in turn clear at the level of left adjoints since $\Delta \colon \P(S) \to \P(S)^d$ is inclusion of sublattices, so that $Lan \mathcal{H}(y) = \mathcal{H}(max(x \in \P(S) \colon x \leq y))$.
\end{proof}

\begin{remark}
Essentially the same proof applies to the case where $L$ is known to be $n_i\geq 1$ excisive in each variable, the result being that $L \circ \Delta$ is $n=n_1+\dots+n_d$ excisive. In that version of the proof $\P_{n-1}(S)^d$ is replaced by $\P_{n-n_1}(S) \times \dotsm \times \P_{n-n_d}(S)$, everything else following similarly.
\end{remark}


\section{The Taylor tower}\label{taylor}

We now turn to the task of defining the universal $n$-excisive approximations $P_nF$ to a homotopy functor $F$. Rather than receive a straight up functor $F \to P_n F$, we will have a zig zag $F \rightsquigarrow P_nF$ (with backward arrows being pointwise w.e.s). With this in mind, and to simplify notation, we assume from now on that $F$ takes values in cofibrant objects, that is to say, we assume $F$ is replaced by $Q_{\D} \circ F$ if necessary.

\begin{definition}
Assume that $\C$ is a pointed simplicial cofibrantly generated model category, and that $\D$ is a cofibrantly generated model category. Let $\X_X$ denote the  cubes of Definition \ref{X X}.

Define $\tilde{\bar{t}}_nF \colon F \to \tilde{\bar{T}}_n F = \holim_{T \in \P_1(S)} \X_{\bullet}(T)$ as the standard map to the $\holim$, and 
$\bar{t}_nF \colon F \rightarrowtail \bar{T}_n F$ as $Q(\tilde{\bar{t}}_nF)$ the {\bf canonical cofibrant factor} of $\tilde{\bar{t}}_nF$.

We then define $t_nF \colon F \rightsquigarrow T_n F$ as the zig zag $F \overset{\sim}{\gets} F \circ Q \overset{\bar{t}_nF \circ Q}{\rightarrowtail} \bar{T}_nF \circ Q = T_nF.$

\end{definition}

\begin{remark}
Notice that the $\bar{T}_nF$ defined above is not a homotopy functor, since $\X_\bullet$ is not itself an homotopical construction. It is, however, a ``left Quillen'' functor, in the sense that $\bar{T}_nF \circ Q$ is now a homotopy functor, since so is $\X_{Q(\bullet)}$.

\end{remark}

The need for the $\bar{T}_nF$ construction is the following: ideally, one would like to just define $P_nF = \hocolim_\alpha T_n^\alpha F$. However, obtaining functorial cartesian cubes from an object $X$ seems to require making a cofibrant replacement first. Hence, so as not to make infinitely many cofibrant replacements when constructing the intended $P_nF$, we merely do it once and then iterate the intermediate $\bar{T}_nF$ construction instead.

\begin{definition}
Let $\lambda$ be an ordinal. 
Define, by transfinite induction, 
\begin{equation}
\begin{split}
\bar{T}_n^\lambda F &= \bar{T}_n (\bar{T}_n^{\tilde{\lambda}}F) \text{, if $\lambda = \tilde{\lambda}+1$, a successor ordinal}   \\
&= \colim_{\beta < \lambda} \bar{T}_n^{\beta} F \text{, if $\lambda$ a limit ordinal.} \\
\end{split}
\end{equation}

\end{definition}

\begin{definition}
Let $\kappa$ be the chosen regular cardinal with respect to which the target category $\D$ is cofibrantly generated.

Set $\bar{P}_nF = T^{\kappa}_n F$, and denote by $\bar{p}_nF \colon F \to \bar{P}_nF$ the standard map.

We then define $p_nF \colon F \rightsquigarrow P_n F$ as the zig zag $F \overset{\sim}{\gets} F \circ Q \overset{\bar{p}_nF \circ Q}{\rightarrowtail} \bar{P}_nF \circ Q = P_nF.$

\end{definition}

\begin{remark}
Again notice that, just like in the case of $\bar{T}_nF$, the functor $\bar{P}_nF$ is not in general a homotopy functor. And, likewise, note that $P_n F$ is a homotopy functor, since so is $T_nF$ and the $colimit$s used in the construction are actually $\hocolim$s (this is the reason to demand the map $F \rightarrowtail \bar{T}_n F$ be a cofibration and that $F$ be pointwise cofibrant).

\end{remark}

We now turn to the task of proving that $F \overset{p_n F}{\rightsquigarrow} P_n F$ is the universal zig zag from $F$ to an $n$-excisive functor. First we check that indeed $P_n F$ is $n$-excisive.

\begin{lemma}\label{factor}
Suppose $\mathcal{W}_\bullet$ a cofibrant strongly hococartesian cube that is the left Kan extension of its restriction to $\P_{\leq 1}(S)$.

Then the map of cubes $F(\mathcal{W}) \xrightarrow{\bar{t}_n \circ \mathcal{W}} \bar{T}_n F(\mathcal{W})$ factors through a hocartesian cube.
\end{lemma}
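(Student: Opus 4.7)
The plan is to factor $F(\mathcal{W}) \to \bar{T}_n F(\mathcal{W})$ through an intermediate hocartesian $S$-cube $\mathcal{Y}$. The idea is to combine $\mathcal{W}$ with the cone construction applied to its initial vertex $\mathcal{W}_\emptyset$, producing an auxiliary larger strongly hococartesian bi-cube whose partial homotopy limit in the cone directions yields $\mathcal{Y}$.

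Specifically, let $S'$ denote the indexing set used in the $\X$-construction, so $|S'|=n+1$, and consider the bi-cube $\tilde{\mathcal{W}}\colon \P(S) \times \P(S') \to \C$ defined by
\[\tilde{\mathcal{W}}(V, T) = \mathcal{W}_V \cup_{\mathcal{W}_\emptyset} \X_{\mathcal{W}_\emptyset}(T),\]
the pushout of $\mathcal{W}_V$ and $\X_{\mathcal{W}_\emptyset}(T)$ over $\mathcal{W}_\emptyset$. Viewed as a cube on $\P(S \sqcup S')$, it is the left Kan extension of the $\P_{\leq 1}(S \sqcup S')$-diagram sending $\emptyset \mapsto \mathcal{W}_\emptyset$, $\{i\} \mapsto \mathcal{W}_{\{i\}}$ for $i \in S$, and $\{j\} \mapsto C(\mathcal{W}_\emptyset)$ for $j \in S'$; hence it is strongly hococartesian, and inherits cofibrancy from $\mathcal{W}$. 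The functoriality of the $\X$-construction in its base object, applied to $\mathcal{W}_\emptyset \to \mathcal{W}_V$, provides natural comparison maps $\tilde{\mathcal{W}}(V, T) \to \X_{\mathcal{W}_V}(T)$, natural in both variables and equal to the identity when $V = \emptyset$ or $T = \emptyset$.

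Now set $\mathcal{Y}(V) = \holim_{T \in \P_1(S')} F(\tilde{\mathcal{W}}(V, T))$. The structure maps $\tilde{\mathcal{W}}(V, \emptyset) \to \tilde{\mathcal{W}}(V, T)$ give a natural map $F(\mathcal{W}_V) = F(\tilde{\mathcal{W}}(V, \emptyset)) \to \mathcal{Y}(V)$, while the comparison maps of the previous paragraph, applied inside the holim, yield $\mathcal{Y}(V) \to \bar{T}_n F(\mathcal{W}_V)$. A routine diagram chase confirms that the composite of these two $S$-cube maps recovers $\bar{t}_n F$ applied to $\mathcal{W}$.

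The main obstacle is verifying that $\mathcal{Y}$ is itself hocartesian. Using commutation of holims in the style of Proposition \ref{comholim}, the holim $\holim_{V \in \P_1(S)} \mathcal{Y}(V)$ rewrites as the iterated holim over $\P_1(S) \times \P_1(S')$, so the comparison map of $\mathcal{Y}$ becomes a map of iterated holims on the bi-cube $F(\tilde{\mathcal{W}})$. I would then exploit the strongly hococartesian structure of $\tilde{\mathcal{W}}$ over $\P(S \sqcup S')$ together with the fact that for each nonempty $T$ the factor $\X_{\mathcal{W}_\emptyset}(T)$ interacts with the $V$-direction through a pushout along the fixed object $\mathcal{W}_\emptyset$, reducing hocartesianness of $\mathcal{Y}$ to a statement already encoded in the strongly hococartesian structure of $\tilde{\mathcal{W}}$. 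Making this precise is the delicate technical heart of the lemma, and will likely require an inductive argument on $|T|$ together with the iterative description of total homotopy fibers (Proposition \ref{itertotfib}).
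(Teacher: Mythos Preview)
Your intermediate cube $\mathcal{Y}$ is not hocartesian in general, and the hand-waving in the last paragraph cannot be completed. For each fixed nonempty $T$, the $S$-cube $V \mapsto \tilde{\mathcal{W}}(V,T) = \mathcal{W}_V \cup_{\mathcal{W}_\emptyset} \X_{\mathcal{W}_\emptyset}(T)$ is simply the cobase change of $\mathcal{W}$ along the map $\mathcal{W}_\emptyset \to \X_{\mathcal{W}_\emptyset}(T)$; it remains strongly hococartesian, but applying an \emph{arbitrary} homotopy functor $F$ to a strongly hococartesian cube yields something with no hocartesianness properties whatsoever---that is exactly what excisiveness of $F$ would say, and $F$ is not assumed excisive. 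Taking a holim over $T$ does not rescue this: a holim of non-hocartesian cubes has no reason to be hocartesian, and the strong hococartesianness of the combined $(S \sqcup S')$-cube is a statement about the source diagram, not about $F$ of it. No induction on $\abs{T}$ or appeal to Proposition~\ref{itertotfib} can manufacture hocartesianness here.

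The missing idea is to take $S' = S$ and to build the bicube so that, for each fixed nonempty $V$, the resulting $S$-cube in the $U$-direction is \emph{degenerate}: its maps in the directions indexed by $v \in V$ are weak equivalences (in fact identities). Any homotopy functor sends such a cube to a hocartesian one, trivially. Concretely, the paper uses a bicube whose $(U,V)$-entry is a mapping-cylinder model weakly equivalent to $\mathcal{W}_{U \cup V}$; for fixed nonempty $V$ the cube $U \mapsto \mathcal{W}_{U \cup V}$ is constant in each $V$-direction, so $F$ of it is hocartesian regardless of $F$, and collapsing the cylinder gives the required natural map to $(U,V) \mapsto \X_{\mathcal{W}_U}(V)$. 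Your construction instead glues on cones of the \emph{initial} vertex $\mathcal{W}_\emptyset$ rather than exploiting the face structure already present in $\mathcal{W}$, and that is precisely why the crucial degeneracy is lost.
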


\begin{proof}

First notice that one needs only prove the result for $F \circ \mathcal{W} \xrightarrow{\tilde{\bar{t}}_nF \circ \mathcal{W}} \tilde{\bar{T}}_n F \circ \mathcal{W}$, as then the result for $\bar{T}_n F(\mathcal{W})$ follows by applying $Q$, the cofibrant factor functor.

Now consider the following $\P(S) \times \P_{\leq 1}(S)$ diagrams in $\C$:

\begin{equation}
\begin{split}
\mathcal{Y}_{U,V}= & \mathcal{W}_U \text{ if } V=\emptyset\\
& \colim(\mathcal{W}_U \otimes \Delta^1 \gets \mathcal{W}_U \otimes \{0\} \to \mathcal{W}_{U \cup V}) \text{ otherwise}\\
\mathcal{Z}_{U,V}= & \mathcal{W}_U \text{ if } V=\emptyset\\
& C(\mathcal{W}_U) \text{ otherwise}
\end{split}
\end{equation}
where maps in the $U$ direction are obvious, and those in the $V$ direction are induced by $\{1\} \to \Delta^1$. Furthermore, unwinding the definition of $C(\mathcal{W}_U)=\mathcal{W}_U \wedge (\Delta^1,\{0\})= \colim(\mathcal{W}_U \otimes \Delta^1 \gets \mathcal{W}_U \otimes \{0\} \to \**)$, we see there is an obvious functor $\mathcal{Y} \to \mathcal{Z}$. Notice also that, since the values of $\mathcal{W}_\bullet$ are all cofibrant, the diagrams $\mathcal{Y}_{U,\bullet}$ are $\P_{\leq 1}(S)$ cofibrant, and that there is an obvious map $\mathcal{Y}_{U,\bullet} \to \mathcal{W}_{U \cup \bullet}$, where $\mathcal{W}_{U \cup \bullet}$ is itself $\P_{\leq 1}(S)$ cofibrant.

Now let $\bar{\mathcal{Y}}, \bar{\mathcal{Z}}$ be the $\P(S) \times \P(S)$ obtained by doing the left Kan extensions along the second variable. Notice that $\bar{\mathcal{Z}}_{U,V} = \X_{\mathcal{W}_U}(V)$. By the above remarks about cofibrancy, we have a map $\bar{\mathcal{Y}}_{U,V} \to \mathcal{W}_{U \cup V}$ which is a pointwise equivalence (that the left Kan extension of the restriction $\mathcal{W}_{U \cup V}|_{\P(S) \times \P_{\leq 1}(S)}$ is again $\mathcal{W}_{U \cup V}$ is where we use that $\mathcal{W}$ was itself given by a left Kan extension). 

The map we were trying to factor is precisely the full composite:
\[F(\mathcal{W}_\bullet) \to  \holim_{V \in \P_1(S)}F(\bar{\mathcal{Y}}_{\bullet,V}) \to \holim_{V \in \P_1(S)}F(\bar{\mathcal{Z}}_{\bullet,V})\]
so we will be done if we prove $\holim_{V \in \P_1(S)}F(\bar{\mathcal{Y}}_{\bullet,V})$ a cartesian cube.

But
comparison maps commute with $\holim_{V \in \P_1(S)}$ (as holims are readily seen to commute with each other in the presence of injective model structures), so that it is enough to show that the $F(\bar{\mathcal{Y}}_{\bullet,V})$ are hocartesian cubes for each $V \neq \emptyset$, which by homotopy invariance is the same as showing that the $F(\mathcal{W}_{\bullet \cup V})$ are hocartesian, which is now clear since in those cubes the maps in the $V$ directions are all w.e.s (and in fact isomorphisms).

\end{proof}

\begin{corollary}
For $F$ any homotopy functor, $P_nF$ is $n$-excisive.
\end{corollary}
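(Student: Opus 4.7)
My plan is to reduce the corollary, via an iterated application of Lemma \ref{factor}, to the commutation of a $\kappa$-filtered colimit with a finite homotopy limit provided by Proposition \ref{comholim}. Let $\mathcal{W}$ be a strongly hococartesian $(n+1)$-cube in $\C$. Since $P_nF = \bar{P}_nF \circ Q$ and hocartesianness and strong hococartesianness are both homotopy invariant properties of cubes, I may replace $\mathcal{W}$ by a weakly equivalent cofibrant strongly hococartesian cube which is the left Kan extension of its restriction to $\P_{\leq 1}(S)$; this is precisely the hypothesis required by Lemma \ref{factor}.

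Next, by transfinite induction on $\alpha \leq \kappa$, I apply Lemma \ref{factor} (with $\bar{T}_n^\alpha F$ in the role of $F$) to produce, at each successor stage, a factorization of cubes
\[\bar{T}_n^\alpha F(\mathcal{W}) \rmap \mathcal{H}^\alpha \rmap \bar{T}_n^{\alpha+1} F(\mathcal{W})\]
with $\mathcal{H}^\alpha$ hocartesian. Because $\P_1(S)$ is finite (hence $\kappa$-small inverse) and $\kappa$ is regular, Proposition \ref{comholim} identifies $\holim_{T \in \P_1(S)} \bar{P}_nF(\mathcal{W}_T)$ with $\colim_{\alpha < \kappa} \holim_{T} \bar{T}_n^\alpha F(\mathcal{W}_T)$ in $Ho(\D)$, so the comparison map of $\bar{P}_nF(\mathcal{W})$ is identified with the $\colim_\alpha$ of the comparison maps of $\bar{T}_n^\alpha F(\mathcal{W})$.

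A cofinality/sandwich argument then closes the proof: the above factorizations exhibit the $\kappa$-sequences $(\bar{T}_n^\alpha F(\mathcal{W}))_\alpha$ and $(\mathcal{H}^\alpha)_\alpha$ as mutually cofinal in the combined zig-zag, so their filtered colimits agree, giving $\bar{P}_nF(\mathcal{W}) \simeq \colim_\alpha \mathcal{H}^\alpha$ as cubes. Since each $\mathcal{H}^\alpha$ is hocartesian and filtered colimits of weak equivalences are weak equivalences, invoking Proposition \ref{comholim} once more to bring the colimit past the $\holim$ on the right-hand side shows that $\colim_\alpha \mathcal{H}^\alpha$ is hocartesian, hence so is $P_nF(\mathcal{W})$.

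The main obstacle I anticipate is the second step. Lemma \ref{factor} is stated for a \emph{homotopy functor} $F$, whereas the remark following the definition of $\bar{T}_n^\lambda F$ warns that these iterates are not homotopy invariant on arbitrary inputs. One must therefore verify that, when evaluated on the cofibrant, Kan-extended cube $\mathcal{W}$ (and the auxiliary cubes $\bar{\mathcal{Y}}_{\bullet,V}$ arising in the proof of Lemma \ref{factor}), each $\bar{T}_n^\alpha F$ does respect the weak equivalences that the proof of the lemma uses, and that this property persists at limit ordinals $\alpha$, where $\bar{T}_n^\alpha F$ is itself defined as a filtered colimit. Compatibility of the $\bar{T}_n$ construction with cofibrancy, together with Proposition \ref{comholim}, should make this bookkeeping go through, but this is the delicate technical point.
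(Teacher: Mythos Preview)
Your proposal is correct and follows essentially the same route as the paper: reduce to cofibrant Kan-extended cubes $\mathcal{W}$, apply Lemma~\ref{factor} iteratively to interleave the sequence $\bar{T}_n^\alpha F(\mathcal{W})$ with hocartesian cubes, and invoke Proposition~\ref{comholim} plus cofinality to conclude that $\bar{P}_nF(\mathcal{W})$ is hocartesian. The technical concern you raise about $\bar{T}_n^\alpha F$ failing to be a homotopy functor is handled exactly as you suggest---all objects appearing in the proof of Lemma~\ref{factor} are cofibrant when $\mathcal{W}$ is, and each $\bar{T}_n^\alpha F$ is homotopical on cofibrant inputs by induction---so the bookkeeping does go through; the paper simply suppresses this point.
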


\begin{proof}
First notice that, since $P_nF$ is a homotopy functor, we need only check that the cubes $\mathcal{W}$ described in Lemma \ref{factor} are sent to hocartesian cubes. Furthermore, since those cubes are pointwise cofibrant, we might as well just check hocartesianness for $\bar{P}_nF \circ \mathcal{W}$.

But the result is now obvious from Lemma \ref{factor} and Proposition \ref{comholim}, as the latter shows that comparison maps commute with the $\hocolim$s in the $\bar{P}_n$ construction, and, by cofinality, those $\hocolim$s can be taken over the factoring strongly hocartesian cubes.

\end{proof}

In order to check universality we'll also need the following:

\begin{proposition}\label{baspnprop}
$\phantom{1}$

\begin{itemize}
\item Let $a \mapsto F_a$, where $a \in A$ a finite category, be a compatible family of homotopy functors, and let $F$ be another homotopy functor equipped with compatible natural transformations $F \to F_a$ displaying $F = \holim_A F_a$. Then the natural transformations $P_nF \to P_nF_a$ also display $P_nF = \holim_A P_nF_a$.
\item Similarly, let $\beta \mapsto F_{\beta}$, for $\beta < \kappa$, a transfinite sequence of homotopy functors ($\kappa$ assumed as in Proposition \ref{comholim}), and let $F$ be another homotopy functor equipped with compatible natural transformations $F_{\beta} \to F$ displaying $F = \hocolim_{\beta} F_{\beta}$. Then $P_nF_{\beta} \to P_nF$ display $P_nF = \hocolim_{\beta} P_nF_{\beta}$.
\end{itemize}
\end{proposition}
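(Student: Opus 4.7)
The strategy is to derive both parts from commutation properties of a single application of $T_n$, then propagate through the transfinite iteration that defines $P_n$. The starting observation is that $T_nF(X) = \holim_{T \in \P_1(S)} F(\X_{QX}(T))$ is, pointwise in $X$, a finite holim over the inverse category $\P_1(S)$ of values of $F$. From this one extracts two base facts: (a) $T_n$ commutes with finite holims of homotopy functors, since two iterated finite holims may always be exchanged, and (b) $T_n$ commutes with $\kappa$-filtered hocolims of homotopy functors, by Proposition \ref{comholim} applied pointwise.

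For each part I would then prove by transfinite induction on $\beta \leq \kappa$ that the iterated construction $T_n^\beta$ commutes with the operation in question up to natural weak equivalence. The base case $\beta = 0$ is tautological. At a successor $\beta = \tilde\beta + 1$ one writes $T_n^{\tilde\beta + 1} = T_n \circ T_n^{\tilde\beta}$, applies $T_n$ to the inductive hypothesis (which is legitimate since $T_n$ preserves pointwise weak equivalences of homotopy functors), and then invokes the relevant single-step commutation (a) or (b). At $\beta = \kappa$, post-composing with $Q$ upgrades the conclusion from $\bar{T}_n^\kappa$ to $P_n = \bar{T}_n^\kappa \circ Q$ and supplies the required zig-zag of natural weak equivalences.

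The delicate point of the induction is the limit stage: at a limit ordinal $\lambda \leq \kappa$ one has $T_n^\lambda F = \hocolim_{\beta < \lambda} T_n^\beta F$, and one must exchange this hocolim with the outer $\holim_A$ (part i) or $\hocolim_\alpha$ (part ii). For part (ii) this is the automatic commutation of two iterated hocolims. For part (i) it is the nontrivial content of Proposition \ref{comholim}, which directly supplies the exchange at $\lambda = \kappa$; at intermediate $\lambda < \kappa$ one appeals to the extension noted in the remark following that proposition, valid whenever the generating maps of $\D$ enjoy the appropriate compactness at shape $\lambda$.

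I expect this sub-$\kappa$ limit-stage exchange in part (i) to be the main obstacle, since Proposition \ref{comholim} as stated is tailored to the $\kappa$ ordinal category. The cleanest way to bypass the difficulty is to reorganize the transfinite argument so that only the single terminal $\kappa$-filtered hocolim needs to be moved past $\holim_A$: one exploits cofinality of successor ordinals in $\kappa$ together with the observation that the natural comparison maps at successor stages are weak equivalences regardless of what happens at intervening limits, and then applies Proposition \ref{comholim} exactly once at the very top of the tower.
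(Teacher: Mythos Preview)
Your argument is the paper's: show a single $T_n$ commutes with the operation in question (holims commute with holims for part (i), Proposition~\ref{comholim} for part (ii)), then run transfinite induction, handling limit stages via Proposition~\ref{comholim} in (i) and via commuting hocolims in (ii). The paper's proof occupies two sentences and does not isolate the intermediate-limit-ordinal subtlety you flag in part (i).

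On that subtlety: you are right to worry, but neither of your proposed fixes actually closes it. The remark following Proposition~\ref{comholim} concerns alternative $\kappa$-filtered index shapes, not $\lambda$-filtered ones for $\lambda < \kappa$, so it does not supply the exchange at intermediate limits unless one separately assumes the stronger compactness. And the cofinality workaround fails because at a successor-of-a-limit $\lambda+1$ one has $T_n^{\lambda+1}F = T_n(T_n^\lambda F)$, so comparing with $\holim_A T_n^{\lambda+1}F_a \simeq T_n(\holim_A T_n^\lambda F_a)$ still requires the limit-stage equivalence $T_n^\lambda F \simeq \holim_A T_n^\lambda F_a$ as input; restricting to successor ordinals does not sever that dependence. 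This is a rough edge already present in the paper's terse argument rather than a defect you have introduced, and your write-up is in fact more careful than the original on this point.
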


\begin{proof}
For the first part, commutativity of $\holim$s with each other immediately gives that $T_nF = \holim_A T_nF_a$, so that the result then follows by applying  transfinite induction and Proposition \ref{comholim}.

For the second part, first apply Proposition \ref{comholim} to conclude the result for $T_nF = \hocolim_{\beta} T_nF_{\beta}$, and then transfinite induction plus the fact that $\hocolim$s commute with each other.
\end{proof}

The first part of the previous result has the following consequence.

\begin{corollary}\label{pncmp}
Let $F_T$, $T \in \P(S)$, be a cube of functors. Then $P_n(\cmp(F_{\bullet}))$ is w.e. to $\cmp(P_nF_{\bullet})$
\end{corollary}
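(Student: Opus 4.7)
The plan is to deduce this essentially directly from Proposition \ref{baspnprop}, the only subtlety being to verify that the equivalence supplied there is compatible with the natural map into the holim. To begin, I would unpack definitions: $\cmp(F_\bullet)$ is, by construction and performed pointwise, the canonical map $F_\emptyset \to \holim_{T \in \P_1(S)} F_T$, and $\cmp(P_n F_\bullet)$ is the map $P_n(F_\emptyset) \to \holim_{T \in \P_1(S)} P_n(F_T)$.

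Next, I would apply $P_n$ to $\cmp(F_\bullet)$ to obtain the map $P_n(F_\emptyset) \to P_n(\holim_{T \in \P_1(S)} F_T)$. Since $\P_1(S)$ is a finite (inverse) category, the first part of Proposition \ref{baspnprop} applies to the diagram $T \mapsto F_T$ together with the holim $\holim_{T \in \P_1(S)} F_T$, yielding a natural equivalence $P_n(\holim_{T \in \P_1(S)} F_T) \simeq \holim_{T \in \P_1(S)} P_n(F_T)$ realized by the natural transformations $P_n(\holim F_T) \to P_n(F_T)$ obtained by applying $P_n$ to the tautological projections.

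Finally, I would verify that, under this identification, $P_n(\cmp(F_\bullet))$ corresponds to $\cmp(P_n F_\bullet)$. This is immediate from the universal property of the holim: both maps, when postcomposed with projection to any $P_n F_T$, reduce to $P_n$ applied to the canonical map $F_\emptyset \to F_T$ in the original cube. The only obstacle, such as it is, is purely bookkeeping --- one must be careful about distinguishing strict limits (which appear in the definition of $\cmp$) from homotopy limits (on which $P_n$ is defined and to which Proposition \ref{baspnprop} applies), but this is handled exactly as in earlier results by passing to injective fibrant replacements of cubes, so that $\lim$ and $\holim$ may be used interchangeably.
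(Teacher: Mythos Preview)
Your proposal is correct and follows exactly the route the paper intends: the corollary is stated as an immediate consequence of the first part of Proposition~\ref{baspnprop}, and your argument simply unwinds what that means for the comparison map. The additional bookkeeping you supply (matching the two maps via the universal property of the holim, and handling strict vs.\ homotopy limits via injective fibrant replacement) is the natural elaboration of what the paper leaves implicit.
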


\begin{theorem}\label{univ}
Assume that the categories $\C, \D$ are cofibrantly generated. Assume further that $\C$ is pointed simplicial, or a left Bousfield localization of a pointed simplicial model category.

Then $F \overset{p_n F}{\rightsquigarrow} P_n F$ exhibits $P_n F$ as the universal $n$-excisive approximation to $F$.
\end{theorem}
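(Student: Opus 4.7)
The plan is to reduce universality to the key lemma that $p_nG\colon G \rightsquigarrow P_nG$ is a pointwise weak equivalence whenever $G$ is $n$-excisive. Granting this, for any zig zag $F \rightsquigarrow G$ with $G$ $n$-excisive, functoriality of the $\bar P_n$ construction (together with that of the defining zig zag of $p_n$) produces a natural zig zag $P_nF \rightsquigarrow P_nG$, which, composed with the reversed zig zag of $p_nG$, yields the required factorization $F \rightsquigarrow P_nF \rightsquigarrow G$; uniqueness of such a factorization up to homotopy follows by a parallel naturality argument.

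To prove the key lemma, I would proceed by transfinite induction on $\beta \leq \kappa$, showing that $G \rightarrowtail \bar T_n^\beta G \circ Q$ is a pointwise weak equivalence. The base case $\beta = 1$ is immediate: for cofibrant $X$ the cube $\X_X$ is strongly hococartesian by construction, so $G(\X_X)$ is hocartesian by the $n$-excisiveness hypothesis, and the natural map $G(X) \to \holim_{T \in \P_1(S)} G(\X_X(T)) = \tilde{\bar{T}}_nG(X)$ is precisely its comparison map. Limit steps are handled by Proposition \ref{comholim}, which allows the $\kappa$-filtered colimit defining $\bar T_n^\lambda G$ to commute with the finite $\holim$s appearing in the structure map, and hence to preserve the weak equivalence.

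The successor step is the delicate one, and is where Proposition \ref{comcones} becomes essential. The naive approach --- observing that $H := \bar T_n^\beta G \circ Q$ is, by the induction hypothesis plus homotopy invariance of $n$-excisiveness, itself $n$-excisive, and then applying the base case to $H$ --- would immediately suffice if the iteration were simply $H \mapsto \bar T_n H \circ Q$; however, the actual iteration $\bar T_n^{\beta+1} G = \bar T_n(\bar T_n^\beta G)$ does not post-compose with $Q$ between layers, and its unfolding for cofibrant $X$ involves the iterated cube-of-cones construction $\X_{\X_X(U)}(V)$ whose hocartesianness must be verified. Proposition \ref{comcones}'s natural isomorphism $\X_{\X_X(U)}(V) \cong \X_{\X_X(V)}(U)$ supplies exactly the symmetry needed to exchange the two layers of cones, and together with the mutual commutation of finite $\holim$s this reduces the successor step back to the base case.

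I expect this successor step to be the main obstacle: it is where the iterated cube-of-cones combinatorics must be reconciled with the $n$-excisiveness hypothesis, and it is the only point in the entire proof requiring any assumption on $\C$ beyond cofibrant generation --- the symmetry of Proposition \ref{comcones} is precisely what forces the pointed simplicial (or left Bousfield localization thereof) hypothesis. The remaining ingredients --- the naturality reduction, the base case, and the limit-ordinal bookkeeping --- are formal or routine, with Proposition \ref{baspnprop} available as a technical aid if needed.
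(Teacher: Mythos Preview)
Your key lemma---that $p_nG$ is a pointwise weak equivalence whenever $G$ is $n$-excisive---is correct, and so is the existence argument you derive from it. But you have misidentified where the difficulty lies. The successor step in your induction does \emph{not} require Proposition~\ref{comcones}: if $G \to \bar T_n^\beta G$ is a weak equivalence on cofibrant objects (the induction hypothesis), then for cofibrant $X$ the objects $\X_X(V)$ are again cofibrant, so $\bar T_n^\beta G(\X_X(V)) \sim G(\X_X(V))$ naturally in $V$, and hence $\bar T_n^{\beta+1}G(X) \sim \holim_V G(\X_X(V)) \sim G(X)$ by $n$-excisiveness. The paper treats this entire lemma as clear in one sentence, and it is.

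The genuine gap is in your uniqueness argument, which you dismiss as ``a parallel naturality argument.'' The key lemma gives you that $p_n(P_nF) \colon P_nF \rightsquigarrow P_n P_nF$ is a weak equivalence. What uniqueness actually requires is that $P_n(p_nF) \colon P_nF \rightsquigarrow P_n P_nF$---the \emph{other} map between the same objects, obtained by applying $P_n$ functorially to $p_nF$---is a weak equivalence. These are a priori different, and your lemma says nothing about the second one. Concretely, given any factorization $\tilde f' \colon P_nF \rightsquigarrow R$, comparing it to the canonical one produces a naturality square whose commutativity only yields $\tilde f' \sim (\text{canonical})$ once you know $P_n(p_nF)$ is invertible; knowing only that $p_n(P_nF)$ is invertible does not let you cancel.

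Showing $P_n(p_nF)$ is a weak equivalence is exactly where Proposition~\ref{comcones} enters in the paper: unwinding reduces it to showing that the cube of functors $\bar P_n(F \circ \X_\bullet)$ is hocartesian on cofibrant inputs, and the cone-symmetry isomorphism is what identifies this cube with $\bar P_n F \circ \X_\bullet$, which is hocartesian because $P_nF$ is $n$-excisive. So the simplicial hypothesis on $\C$ is needed for uniqueness, not for your key lemma.
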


\begin{proof}

We first prove existence of factorization. Given a zig zag $f \colon F \rightsquigarrow R$, with $R$ assumed $n$-excisive, first replace it by $f \circ Q \colon F \circ Q \rightsquigarrow R \circ Q$. 

Given a weak map $F(Q)\to R$, where $R$ is assumed $n$-excisive, just consider the commutative diagram of weak maps:

\begin{equation}
\xymatrix{
F \circ Q \ar[d]_{\bar{p}_nF \circ Q}\ar@{~>}[r]^{f \circ Q} & R \circ Q \ar[d]^{\bar{p}_nR \circ Q}\\
P_nF\ar@{~>}[r]^{P_nf} & P_nR 
}
\end{equation}

Since $\bar{p}_nR \circ Q$ is clearly an equivalence when $R$ is $n$-excisive (the $\bar{t}_n$ where constructed to ensure this, and $\bar{p}_n$ is merely their transfinite composition), we have that the zig zag $\P_nF \rightsquigarrow P_nR \overset{\sim}{\gets} R \circ Q$ provides a factorization $\tilde{f}$ of $f \circ Q$ through $\bar{p}_nF \circ Q$.

We now prove uniqueness of the factorization in the homotopy sense, i.e., that any two factorizations can be related by w.e.s.

Consider the diagram

\begin{equation}
\xymatrix{
F \ar@{~>}[d] \ar@{~>}[r]^{p_nF} & P_nF \ar@{~>}[d]^{\sim}\ar@{~>}[r]^{\tilde{f}'} & R \ar@{~>}[d]^{\sim} \\
P_nF\ar@{~>}[r]^{P_n(p_nF)} & P_n(P_nF) \ar@{~>}[r]^{P_n(\tilde{f}')} & P_nR 
}
\end{equation}

The factorization of $\tilde{f}' \circ p_nF$ constructed in the first part of the proof is given by the ``lower followed by right'' zig zag in the diagram. Which, given two of the vertical maps are known to be w.e.s, will follow if $P_n(p_nF)$ is a zig zag of w.e.s. 

This amounts to showing that $P_n(\bar{p}_nF \circ Q)$ is a w.e., and this, by the second part of Proposition \ref{baspnprop}, will follow if we show $P_n(\bar{t}_nF \circ Q)$ a w.e., which by Corollary \ref{pncmp} amounts to showing that $P_n(F\circ \X_{Q(\bullet)})$ is a hocartesian cube of functors. And this is in turn is equivalent to showing that $\bar{P}_n(F \circ \X_{\bullet})$ gives a cartesian cube when applied to cofibrant objects.

But now notice that the cube $\bar{P}_n(F \circ \X_{\bullet})$ is isomorphic to the cube $\bar{P}_nF \circ \X_{\bullet}$: indeed, that this is true at the $\bar{T}_n$ level follows from Proposition \ref{comcones} (which actually establishes such isomorphisms for the $\bar{t}_n$ functor itself), and the claim for $\bar{P}_n$ follows by induction (with the additional remark that precomposing with $\X_\bullet$ obviously commutes with limits of functors).

But now one finally concludes hocartesianness of  $\bar{P}_nF \circ \X_{\bullet}$ as an instance of $P_nF $ being $n$-excisive, and the proof is complete.

\end{proof}

\begin{remark}\label{extracond}
Much of the above follows just fine using alternate functorial definitions of the ``cones'' $X \rightarrowtail C(X)$. However, such cones typically only satisfy a weak version of Proposition \ref{comcones} (indeed, in the previous Theorem the extra conditions on $\C$ are merely to guarantee that proposition holds) and, as such, it seems technically hard to produce the corresponding ``uniqueness'' part of the previous proof.

\end{remark}

\section{Goodwillie calculus and stable categories}\label{goodstab}

Our ultimate goal in this section will be to adapt the proof of the following result of Goodwillie's: let $F \colon \C \to \D$ be a $d$-homogeneous homotopy functor, whose target category $\D$ is stable, and let $cr_d F \colon \C^d \to \D$ be its cross effect. Then $F(X) = cr_d F(X,\cdots,X)_{h \Sigma_d}$.

\subsection{Hofibers/hocofibers detect weak equivalences}

We start by proving some basic results on stable categories. We recall the definition:

\begin{definition}
Let $\D$ be a pointed simplicial (cofibrantly generated) model category. Then $\D$ is said to be {\bf stable} if the Quillen adjunction
\[(S^1,\**)\wedge \bullet=\Sigma \colon \D \rightleftarrows \D \colon \Omega = Map((S^1,\**),\bullet)\]
is actually a Quillen equivalence.

We will denote by $\bar{\Sigma}=\Sigma \circ Q$, $\bar{\Omega}= \Omega \circ F$ chosen derived functors.
\end{definition}

\begin{remark}
Note that this definition can also be made non simplicially, though we will not be using it in such generality.
\end{remark}

\begin{proposition}
Let $\D$ be a stable model category. Then every hococartesian square is also a hocartesian square.
\end{proposition}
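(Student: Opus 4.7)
The plan is to reduce hocartesianness to hococartesianness by showing that in a stable category the total homotopy fiber and total homotopy cofiber of any square agree up to a double application of $\bar{\Omega}$. Since $\bar{\Omega}$ is an equivalence on $Ho(\D)$, it preserves contractibility, so the vanishing of the total hocofiber (hococartesianness) will immediately force the vanishing of the total hofiber (hocartesianness), which in $Ho(\D)$ is equivalent to the hocartesian comparison map being a w.e.\ (this last equivalence is itself a consequence of stability, detecting w.e.s via hofibers).

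The essential input will be the standard stable identification $\hofiber(f) \simeq \bar{\Omega}\,\hocof(f)$, natural in maps $f\colon X \to Y$ of $\D$. To obtain it, one extends the hocofiber sequence $X \to Y \to \hocof(f)$ to its Puppe connecting map $\hocof(f) \to \bar{\Sigma} X$; since $(\bar{\Sigma},\bar{\Omega})$ is an equivalence on $Ho(\D)$, applying $\bar{\Omega}$ converts this to the usual hofiber sequence $\hofiber(f) \to X \to Y$, yielding the claimed natural equivalence.

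Next I would invoke Proposition \ref{itertotfib} together with its evident dual for total hocofibers to compute both invariants iteratively. Viewing a square $\X$ as a map of arrows $(A\to B) \to (C \to D)$ and taking fibers (resp.\ cofibers) of each arrow first, one obtains
\[ \operatorname{tothofib}(\X) \simeq \hofiber\bigl(\hofiber(A\to B) \to \hofiber(C\to D)\bigr), \]
\[ \operatorname{tothocof}(\X) \simeq \hocof\bigl(\hocof(A\to B) \to \hocof(C\to D)\bigr). \]
Applying $\hofiber \simeq \bar{\Omega}\,\hocof$ to both inner terms of the first expression, and using that $\bar{\Omega}$, being right Quillen, commutes with $\hofiber$, one rearranges
\[ \operatorname{tothofib}(\X) \simeq \hofiber\bigl(\bar{\Omega}\,\hocof(A\to B) \to \bar{\Omega}\,\hocof(C\to D)\bigr) \simeq \bar{\Omega}\,\hofiber\bigl(\hocof(A\to B) \to \hocof(C\to D)\bigr), \]
and one final application of $\hofiber \simeq \bar{\Omega}\,\hocof$ produces $\operatorname{tothofib}(\X) \simeq \bar{\Omega}^2 \,\operatorname{tothocof}(\X)$. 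Specializing to a hococartesian $\X$ gives the conclusion.

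The step I expect to require the most care is packaging the stable identification $\hofiber \simeq \bar{\Omega}\,\hocof$ with enough naturality in the map $f$ so that the chain of equivalences above really produces an equivalence between the derived functors $\operatorname{tothofib}$ and $\bar{\Omega}^2 \operatorname{tothocof}$ of squares, rather than merely an abstract isomorphism of individual homotopy types. Unpacking it through the Puppe construction as sketched should suffice; as a fallback, one could sidestep the naturality issue entirely by instead directly comparing $\hocmp(\X)$ with its dual via their iterated descriptions supplied by Propositions \ref{itertotfib} and \ref{compmapcom}.
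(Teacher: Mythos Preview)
Your approach differs substantially from the paper's. The paper gives essentially a one--line argument using the Goodwillie machinery just set up: by construction $T_1(Id)\simeq\bar{\Omega}\bar{\Sigma}$, and the stability hypothesis says precisely that the unit $Id\to\bar{\Omega}\bar{\Sigma}$ is an equivalence; hence $P_1(Id)\simeq Id$, so $Id$ is $1$-excisive, which is exactly the statement that hococartesian squares are hocartesian. Your route is the more classical one, essentially proving what the paper later records as Corollary~\ref{fibcofib} (for $d=2$) first and reading the proposition off from it.

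There is, however, a circularity you need to address. In the paper's logical order the identification $\hofiber(f)\simeq\bar{\Omega}\,\hocof(f)$ is Corollary~\ref{basecase}, and it is deduced \emph{from} the present proposition (via Proposition~\ref{contrhocof}). Your independent justification via Puppe---``applying $\bar{\Omega}$ converts this to the usual hofiber sequence''---is too quick: $\bar{\Omega}$, being (the derived functor of) a right adjoint, preserves \emph{hofiber} sequences, but nothing you have said shows it carries the hocofiber sequence $Y\to C\to\bar{\Sigma}X$ to a hofiber sequence; asserting that it does is tantamount to asserting the proposition. The companion claim ``detecting w.e.s via hofibers'' can be grounded independently (from the fiber Puppe sequence, $\hofiber(f)\simeq *$ forces $\bar{\Omega}f$ to be a w.e., hence $f$ is by stability), but the $\hofiber\simeq\bar{\Omega}\,\hocof$ step genuinely needs a different argument than the one you sketched. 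If you supply one, your proof goes through and in fact yields Corollary~\ref{fibcofib} before the proposition; the paper's approach simply sidesteps the issue by invoking $1$-excisiveness of $Id$ directly.
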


\begin{proof}
Following the definition of the Goodwillie tower in the last section we have that $T_2 Id \sim \bar{\Omega} \bar{\Sigma}$ with the map $Id \rightsquigarrow T_2 Id $ that induced by the Quillen equivalence. Hence $P_2Id \sim Id$, and the result follows.
\end{proof}

\begin{proposition}\label{contrhocof}
Let $\D$ be stable model category, and consider a map $X \xrightarrow{f} Y$. Then $F$ is a w.e. iff the hocofiber $hocof(f)$ is contractible.
\end{proposition}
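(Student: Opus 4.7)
The plan is to leverage the previous proposition (every hococartesian square in a stable $\D$ is also hocartesian) applied to the standard pushout square
\[
\xymatrix{
X \ar[r]^{f} \ar[d] & Y \ar[d] \\
\** \ar[r] & \hocof(f)
}
\]
which, being hococartesian, is then automatically hocartesian as well.

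For the forward direction, I would argue as follows. If $f$ is a w.e., then the parallel arrow $\** \to \hocof(f)$ in the hococartesian square is also a w.e.\ (homotopy pushouts preserve w.e.s along parallel legs, which is a basic property of $\hocolim$ over the pushout diagram, independent of stability). Hence $\hocof(f) \simeq \**$.

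For the reverse direction I would use stability crucially. Assume $\hocof(f) \simeq \**$. Then by the previous proposition the square above is hocartesian, which means the canonical map $X \to \hofiber(Y \to \hocof(f))$ is a weak equivalence. But since $\hocof(f) \simeq \**$, the map $Y \to \hocof(f)$ is homotopic to $Y \to \**$, so $\hofiber(Y \to \hocof(f)) \simeq \hofiber(Y \to \**) \simeq Y$, and one checks that the composite equivalence $X \to Y$ is precisely (homotopic to) $f$.

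The only subtle point, and the one I'd be most careful about, is making sure that under the identification $\hofiber(Y \to \**) \simeq Y$ the induced map from $X$ really is $f$ up to the appropriate zig-zag in $\mathrm{Ho}(\D)$; this is straightforward from the universal property of the hofiber but is the step that requires the extra bookkeeping. Both directions otherwise follow immediately from the earlier proposition, so no further model-theoretic input beyond stability is needed.
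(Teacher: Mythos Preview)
Your proposal is correct and follows essentially the same approach as the paper: form the defining hococartesian square for $\hocof(f)$, invoke the previous proposition to conclude it is also hocartesian, and read off both implications. The paper's proof is more terse---after noting the square is a homotopy pullback it simply declares the result ``obvious''---whereas you have spelled out both directions explicitly, but the underlying argument is the same.
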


\begin{proof}
Without loss of generality assume $f$ a cofibrant diagram. Then the diagram
\begin{equation}
\xymatrix{ X \ar[r]^f \ar[d] & Y \ar[d]\\
\** \ar[r] & hocof(f)
}
\end{equation}
is a homotopy pushout and hence, by the previous proposition, also a homotopy pullback, and the result is now obvious.
\end{proof}

\begin{proposition}\label{doublefib}
Let $A \xrightarrow{f} B \xrightarrow{g} C$ be a sequence of maps with $g \circ f = *$. Then, if the sequence is a hofiber sequence we have $hof(f) \sim \bar{\Omega}C$. Conversely, if the sequence is a hocofiber sequence, we have $hocof(g) \sim \bar{\Sigma} A$.
\end{proposition}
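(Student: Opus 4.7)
The plan is to prove both claims by the pasting lemma for homotopy (co)cartesian squares, which in this general context can be extracted from the iterative characterization of total homotopy fibers in Proposition \ref{itertotfib} (and its obvious dual for total hocofibers). I will note up front that neither assertion actually requires stability of $\D$, although both are most naturally invoked in the stable setting.

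For the first statement, the hypothesis $g\circ f = \ast$ permits me to write down the commutative $2\times 3$ grid
\[
\xymatrix{
hof(f) \ar[r] \ar[d] & A \ar[r] \ar[d]^f & \ast \ar[d] \\
\ast \ar[r] & B \ar[r]^g & C
}
\]
in which the left square is homotopy cartesian by the very definition of $hof(f)$, while the right square is homotopy cartesian by the hypothesis that $A\to B\to C$ is a hofiber sequence. Pasting then identifies the outer rectangle as a homotopy pullback, but that outer rectangle is precisely the defining square for $\bar{\Omega}C$, so $hof(f)\sim \bar{\Omega}C$.

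The second statement is proved entirely dually. I would form
\[
\xymatrix{
A \ar[r]^f \ar[d] & B \ar[r] \ar[d]^g & \ast \ar[d] \\
\ast \ar[r] & C \ar[r] & hocof(g)
}
\]
in which the left square is homotopy cocartesian by the hocofiber sequence hypothesis and the right square by the definition of $hocof(g)$. Pasting then yields $hocof(g)\sim \bar{\Sigma}A$.

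The main technical obstacle will be cleanly extracting the pasting lemma from Proposition \ref{itertotfib} at the level of homotopy categories, so as to paste homotopy (co)cartesian squares without laboriously tracking explicit fibrant/cofibrant replacements. The cleanest route is to observe that the tothofiber of the outer $2\times 3$ rectangle can be computed by first totalizing hofibers along the middle column --- a grouped-variables instance of Proposition \ref{itertotfib} --- so that once the left and right squares both have contractible tothofibers, the outer rectangle does too. The hocofiber half follows by the formally dual argument.
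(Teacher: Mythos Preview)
Your argument via the pasting law for homotopy (co)cartesian squares is correct and is a genuinely different route from the paper's. The paper's proof is the one-liner ``This is clear from the simplicial models for hocofibers/hofibers'': it simply appeals to the explicit path-object and mapping-cone constructions available in a pointed simplicial model category, where the fiber of the inclusion of a fiber is visibly the loop object. Your approach is more abstract and, as you correctly note, requires neither the simplicial enrichment nor stability; the paper's approach is more hands-on and needs no auxiliary lemma beyond the standard models.

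One caveat: your proposed extraction of the pasting lemma from Proposition~\ref{itertotfib} does not quite go through as written. That proposition is about cubes indexed by $\P(S\amalg S')$, and a $2\times 3$ grid is not of that shape; more seriously, your sketch reduces to ``both inner squares have contractible tothofiber, hence so does the outer rectangle, hence the outer rectangle is cartesian,'' and the last implication fails in a general pointed model category (contractible hofiber does not force a map to be a weak equivalence unless one is already stable). The pasting lemma is of course standard and is most cleanly obtained directly: replace the diagram so that the right vertical is a fibration between fibrant objects and the right square is a strict pullback, then invoke the ordinary $1$-categorical pasting law. This is routine and does not affect the validity of your overall argument, but I would not attempt to route it through \ref{itertotfib}.
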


\begin{proof}
This is clear from the simplicial models for hocofibers/ hofibers.
\end{proof}

\begin{corollary}\label{basecase}
Let $\D$ be a stable category, and $X \xrightarrow{f} Y$ a map. Then $\hofiber (f) \sim \bar{\Omega} \hocofiber(f)$.
\end{corollary}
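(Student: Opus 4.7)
The plan is to reduce the corollary to Proposition \ref{doublefib} by converting a hocofiber sequence into a hofiber sequence, using the fact established earlier in this section that in a stable category every hococartesian square is also hocartesian.

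First, I would start with the map $X \xrightarrow{f} Y$ and form its hocofiber, obtaining the homotopy pushout square
\[
\xymatrix{
X \ar[r]^{f} \ar[d] & Y \ar[d] \\
\** \ar[r] & \hocofiber(f)
}
\]
so that $X \to Y \to \hocofiber(f)$ is a hocofiber sequence whose composite is the zero map. By the proposition asserting that hococartesian squares in a stable $\D$ are hocartesian, this same square is a homotopy pullback. Reading off that pullback says precisely that $X$ is the hofiber of $Y \to \hocofiber(f)$, so the sequence $X \to Y \to \hocofiber(f)$ is also a hofiber sequence in the sense of the definition given earlier (the square built from its first map is hocartesian with $\**$ in the upper right).

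With this in hand, I would apply the first half of Proposition \ref{doublefib} to the hofiber sequence $X \to Y \to \hocofiber(f)$, with $A = X$, $B = Y$, $C = \hocofiber(f)$ and first map $f$. The conclusion is directly $\hofiber(f) = \hof(A \to B) \sim \bar{\Omega} C = \bar{\Omega} \hocofiber(f)$, which is what was to be proved.

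The only step that requires any care is the identification of the pushout square (once reinterpreted as a pullback) with an honest hofiber sequence in the sense used by Proposition \ref{doublefib}; this is essentially unpacking definitions, and one may assume $f$ is cofibrant without loss of generality exactly as in the proof of Proposition \ref{contrhocof}. There is no substantive obstacle beyond chaining the two already-established facts.
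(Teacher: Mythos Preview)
Your proof is correct and follows exactly the route the paper intends: the paper's one-line proof says ``This is clear from the proof of Proposition \ref{contrhocof} and Proposition \ref{doublefib},'' and you have simply unpacked that---form the hocofiber square, use stability to recognize it as a hofiber square, then feed the resulting hofiber sequence $X \to Y \to \hocofiber(f)$ into Proposition \ref{doublefib}. There is no substantive difference between your argument and the paper's.
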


\begin{proof}
This is clear from the proof of Proposition \ref{contrhocof} and Proposition \ref{doublefib}.
\end{proof}

\begin{corollary}
Let $\D$ be stable model category, and consider a map $X \xrightarrow{f} Y$. Then $F$ is a w.e. iff the hocofiber $\hocof(f)$ is contractible.
\end{corollary}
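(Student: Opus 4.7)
The plan is to deduce this corollary from the machinery just built up, specifically from Corollary \ref{basecase}, rather than re-running the hococartesian-equals-hocartesian argument used in the proof of Proposition \ref{contrhocof}. Note that the statement as worded coincides with Proposition \ref{contrhocof}, so the interest of a fresh proof lies in routing it through the hofiber so that it dovetails with the $\hofiber(f) \sim \bar{\Omega}\hocofiber(f)$ identification.

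First I would recall the purely formal fact, valid in any pointed model category, that a map $f \colon X \to Y$ is a weak equivalence if and only if $\hofiber(f)$ is contractible. This is immediate after factoring $f$ as a trivial cofibration followed by a fibration: the strict fiber of the fibration then computes $\hofiber(f)$, and the long exact sequence (or a direct lifting argument) gives the equivalence.

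Next I would apply Corollary \ref{basecase}, which yields the natural equivalence $\hofiber(f) \sim \bar{\Omega}\hocof(f)$ in the stable category $\D$. Since stability of $\D$ means that $(\bar{\Sigma},\bar{\Omega})$ is a Quillen equivalence, the derived $\bar{\Omega} \colon Ho(\D) \to Ho(\D)$ is an equivalence of categories, and in particular preserves and reflects the zero object: $\bar{\Omega}Z \sim *$ if and only if $Z \sim *$. Chaining the three equivalences, $f$ is a w.e.\ iff $\hofiber(f)\sim *$ iff $\bar{\Omega}\hocof(f)\sim *$ iff $\hocof(f)\sim *$.

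No real obstacle is expected, since every ingredient has already been recorded in the section. The only point worth being careful about is the reflective property of $\bar{\Omega}$: one should either invoke that a right adjoint of a Quillen equivalence induces an equivalence of homotopy categories, or alternatively use $\bar{\Sigma}\bar{\Omega}\hocof(f)\sim \hocof(f)$ together with $\bar{\Sigma}(*)\sim *$ to conclude directly.
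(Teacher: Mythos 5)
The pivot of your argument is the claim that in \emph{any} pointed model category a map $f$ is a weak equivalence if and only if $\hofiber(f)$ is contractible, and that claim is false: only the forward implication is formal (the strict fiber of a trivial fibration over the basepoint is itself a trivial fibration over $*$, hence contractible). The converse already fails in pointed simplicial sets, e.g.\ for the inclusion $* \to S^0$ of the basepoint into the two-point set, whose homotopy fiber is contractible although the map is not a weak equivalence; there is no ``long exact sequence'' available in a general pointed model category, and even for pointed spaces it only controls the basepoint component. In the stable setting the claim \emph{is} true, but it is precisely the dual of Proposition \ref{contrhocof}: one needs that hocartesian squares are hococartesian (or, equivalently, the counit $\bar{\Sigma}\bar{\Omega} \to Id$ being an equivalence together with the dual of Corollary \ref{basecase}), and the section has only recorded the other implication (hococartesian $\Rightarrow$ hocartesian). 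So as written your chain proves only that $f$ a w.e.\ implies $\hocof(f) \sim *$; for the converse you have in effect assumed a statement of the same strength as the one to be proved.

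For comparison: the statement as printed is verbatim Proposition \ref{contrhocof}, and the paper's proof simply cites that proposition together with Corollary \ref{basecase}; the corollary is evidently intended to assert the \emph{hofiber} criterion ($f$ is a w.e.\ iff $\hofiber(f)$ is contractible) --- which is exactly the fact you tried to take for granted. The repair is to run your chain in the opposite direction: by Proposition \ref{contrhocof}, $f$ is a w.e.\ iff $\hocof(f) \sim *$; by Corollary \ref{basecase} and the fact that the equivalence $\bar{\Omega}$ of $Ho(\D)$ preserves and reflects zero objects (this part of your argument is fine), this is equivalent to $\hofiber(f) \sim *$.
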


\begin{proof}
Obvious from Corollary \ref{basecase} and Proposition \ref{contrhocof}.
\end{proof}

\begin{corollary}\label{fibcofib}
Let $\D$ be stable model category.

Let $\X \colon \P(S) \to \D$ be a $d$-cube. Then $hototfiber{\X} \sim \bar{\Omega}^d hototcofiber{\X}$.
\end{corollary}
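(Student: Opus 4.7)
The plan is to proceed by induction on $d = |S|$, peeling off one coordinate at a time via the iterative formula for total hofibers from Proposition \ref{itertotfib} and its evident dual for total hocofibers, thereby reducing the inductive step to the one-dimensional case already handled by Corollary \ref{basecase}. The base case $d = 1$ is simply a map $f$ in $\D$, and the required statement $\hofiber(f) \sim \bar{\Omega}\hocof(f)$ is exactly Corollary \ref{basecase}.

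For the inductive step, I would split $S = S' \amalg \{s_0\}$ with $|S'| = d-1$. Proposition \ref{itertotfib} identifies the total hofiber of $\X$ with the hofiber of the induced $1$-cube $Y_0 \to Y_1$, where $Y_\epsilon$ is the total hofiber over $S'$ of the face $\X|_{s_0 = \epsilon}$. The base case applied to $Y_0 \to Y_1$ produces a natural weak equivalence to $\bar{\Omega}\hocof(Y_0 \to Y_1)$, while the inductive hypothesis, applied naturally to both faces, gives $Y_\epsilon \sim \bar{\Omega}^{d-1}Z_\epsilon$ where $Z_\epsilon$ is the total hocofiber over $S'$ of the face $\X|_{s_0 = \epsilon}$. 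Commuting $\bar{\Omega}^{d-1}$ past the outer $\hocof$ and invoking the dual of Proposition \ref{itertotfib} applied to the induced $1$-cube $Z_0 \to Z_1$, whose hocofiber is the total hocofiber of $\X$, then yields $\bar{\Omega}\cdot\bar{\Omega}^{d-1} = \bar{\Omega}^d$ applied to the total hocofiber of $\X$, as required.

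The main (and really only) obstacle is the commutation $\bar{\Omega}\,\hocof \sim \hocof\,\bar{\Omega}$ used above, which genuinely relies on stability of $\D$. The cleanest route I see is via Proposition \ref{doublefib}: in a stable category, $\hocof(f) \sim \bar{\Sigma}\hofiber(f)$; since $\bar{\Omega}$ is a right Quillen functor it commutes with $\hofiber$, while the Quillen equivalence $\bar{\Sigma} \dashv \bar{\Omega}$ supplies $\bar{\Omega}\bar{\Sigma} \sim \mathrm{id}$, so that both $\bar{\Omega}\hocof(f)$ and $\hocof(\bar{\Omega} f)$ are naturally weakly equivalent to $\hofiber(f)$. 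I would extract this as a small standalone lemma before running the induction. The dualization of Proposition \ref{itertotfib} itself is essentially formal: its proof runs via a Quillen adjunction whose hococartesian counterpart is entirely symmetric, so no new ideas are needed there.
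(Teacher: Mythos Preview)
Your proof is correct and follows the same inductive skeleton as the paper: peel off one coordinate using Proposition \ref{itertotfib} and its dual, with Corollary \ref{basecase} supplying the one-dimensional input. The only organizational difference is the order in which you run the two reductions. You take the total hofiber over $S'$ first and then apply Corollary \ref{basecase} to the resulting map $Y_0 \to Y_1$; this forces you to commute $\bar{\Omega}^{d-1}$ past a \emph{hocofiber}, which genuinely uses stability and which you correctly isolate as a separate lemma. The paper instead invokes the ``relative case'' of Corollary \ref{basecase}, meaning it applies that corollary to the map of $S'$-cubes $\X|_{s_0=0} \to \X|_{s_0=1}$ inside the diagram category $\D^{\P(S')}$, obtaining an equivalence of $S'$-cubes before taking any total (co)fibers. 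In that order the only commutation needed is $\bar{\Omega}$ past $tothofiber_{S'}$, which is trivial since both are homotopy limits. So the paper's route sidesteps your extra lemma entirely, at the cost of needing the parametrized statement of Corollary \ref{basecase}; your route is slightly more hands-on but equally valid, and your justification of the $\bar{\Omega}$--$\hocof$ commutation via $\hocof \sim \bar{\Sigma}\,\hofiber$ is sound.
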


\begin{proof}

By using Proposition \ref{itertotfib} and its dual (concerning cofibers), one sees that the result can be proven inductively. But then Proposition \ref{basecase} and its ``relative case'' immediately provide both the base case and the induction step.

\end{proof}

\begin{proposition}\label{fibfib}
Let $\D$ be a pointed model category, and $A \xrightarrow{f} B \xrightarrow{g} C$ a sequence of maps. Then $\hofiber(f) = \hofiber (\hofiber(g \circ f) \to \hofiber(g))$.
\end{proposition}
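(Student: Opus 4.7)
The plan is to recast the three-term sequence $A \xrightarrow{f} B \xrightarrow{g} C$ as a $2$-cube in $\D$ and apply the iterative total homotopy fiber formula (Proposition \ref{itertotfib}) along its two axes.

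Specifically, consider the square
\[
\xymatrix{
A \ar[r]^f \ar[d]_{g\circ f} & B \ar[d]^g \\
C \ar[r]_{=} & C
}
\]
regarded as a $\P(\{1,2\})$-cube in $\D$. I would then compute its total hofiber in two orders and identify the outputs via Proposition \ref{itertotfib}.

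First, in the horizontal-then-vertical order: the top row contributes $\hofiber(f)$, while the bottom row contributes $\hofiber(\mathrm{id}_C)\simeq *$ (immediate from the universal property $\holim(C \xrightarrow{=} C \leftarrow *) \simeq *$). The remaining vertical map is then the canonical $\hofiber(f) \to *$, whose hofiber is again $\hofiber(f)$. Next, in the vertical-then-horizontal order: the left column gives $\hofiber(g \circ f)$, the right column gives $\hofiber(g)$, and the resulting horizontal arrow is precisely the map induced by $f$ on cospans $(A \to C \leftarrow *) \to (B \to C \leftarrow *)$, i.e.\ the map appearing in the statement. Its hofiber is $\hofiber(\hofiber(g\circ f) \to \hofiber(g))$. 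Proposition \ref{itertotfib} supplies a natural isomorphism between these two iterated computations, which is exactly the desired equivalence.

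The main (and essentially only) obstacle is the coherent identification of the two iterated hofiber constructions, but this is precisely what Proposition \ref{itertotfib} packages, so no further hypothesis on $\D$ beyond pointedness is needed. The only auxiliary verifications are the contractibility of $\hofiber(\mathrm{id}_C)$ and the identification of the induced horizontal arrow with the map from the statement, both of which are formal from the relevant universal properties.
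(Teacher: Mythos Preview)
Your argument is correct. The paper's own proof is a one-line gesture (``Clear from first choosing fibrant diagrams''), i.e.\ pass to an injective fibrant replacement of the diagram $A \to B \to C$ and observe that strict fibers then compute hofibers and satisfy the stated identity on the nose. Your route is different in presentation: you organize the data into a commutative square and invoke Proposition~\ref{itertotfib} to compare the two iterated total hofibers. This is a legitimate and arguably cleaner packaging, since it makes explicit use of machinery already established in the paper rather than re-deriving the commutation by hand; on the other hand, Proposition~\ref{itertotfib} is itself proved exactly by ``first choosing fibrant diagrams'' and comparing strict fibers, so the two arguments unwind to the same computation. Your version has the advantage of exhibiting the result as a special case of the general iterative formula, which is conceptually satisfying; the paper's version has the advantage of brevity.
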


\begin{proof}
Clear from first choosing fibrant diagrams.
\end{proof}

\begin{proposition}\label{basecase2}
Let $\D$ be a stable model category, and $A \xrightarrow{f} B \xrightarrow{g} A$ a sequence of maps with $g \circ f = id_A$. Then $hocofiber(f) \sim \hofiber (g)$ and, furthermore, this equivalence is obtained from the natural sequence of maps $\hofiber(g) \to B \to hocofiber (f)$.

\end{proposition}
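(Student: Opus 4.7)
The plan is to exhibit the composite $\hofiber(g) \to B \to \hocof(f)$ as the only nontrivial edge of a $2\times 2$ square whose total homotopy fiber can be computed in two distinct ways. I would start with the map of homotopy pushout squares
\[
\xymatrix{
A \ar[r]^{f} \ar[d] & B \ar[d] \\
* \ar[r] & \hocof(f)
}
\qquad\xrightarrow{(\mathrm{id},\, g,\, \mathrm{id},\, \bar{g})}\qquad
\xymatrix{
A \ar[r]^{\mathrm{id}} \ar[d] & A \ar[d] \\
* \ar[r] & *
}
\]
which commutes precisely because $gf = \mathrm{id}_A$, the map $\bar{g}\colon \hocof(f) \to *$ being forced by the terminal target. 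This assembles into a $2 \times 2 \times 2$ cube in $\D$ whose total homotopy fiber can, by Proposition \ref{itertotfib}, be computed by iterating fibers in any order.

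Computing first in the direction internal to each pushout square yields a contractible object at both layers, using that in the stable setting every homotopy pushout is equally a homotopy pullback; thus the total fiber of the cube vanishes. Computing first in the perpendicular direction (between the two squares) reduces the cube to a $2 \times 2$ square whose four vertices are the vertical homotopy fibers, which inspection of the induced maps identifies as
\[
\xymatrix{
* \ar[r] \ar[d] & \hofiber(g) \ar[d]^{h} \\
* \ar[r] & \hocof(f)
}
\]
with $h$ precisely the natural composite $\hofiber(g) \to B \to \hocof(f)$ (using also that the homotopy fiber of $X \to *$ is $X$ itself). The total homotopy fiber of this $2 \times 2$ square is $\bar{\Omega}\hofiber(h)$.

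Equating the two computations gives $\bar{\Omega}\hofiber(h) \sim *$, and stability (which makes $\bar{\Omega}$ reflect weak equivalences) forces $\hofiber(h) \sim *$, so $h$ is a weak equivalence, as desired. The main technical point, and the step I expect to require the most care, is the identification of the induced map on vertical fibers with the stated natural composite; this is a routine unwinding once one works with a fibrant model of the cube, and it is exactly what yields the ``furthermore'' clause of the statement, namely that the equivalence $\hocof(f) \sim \hofiber(g)$ is realized by the natural maps through $B$.
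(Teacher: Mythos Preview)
Your argument is correct, but it takes a genuinely different route from the paper's. The paper instead proves the stronger splitting $B \sim A \vee \hofiber(g)$: one considers the composite $A \vee \hofiber(g) \to B \xrightarrow{g} A$ and applies Proposition~\ref{fibfib} to see that the first map has trivial homotopy fiber, hence is an equivalence; once $B$ is identified with the wedge, with $f$ and $g$ the standard inclusion and projection, both $\hocof(f)$ and $\hofiber(g)$ are visibly $\hofiber(g)$, and the composite through $B$ is the identity. Your cube argument bypasses this splitting entirely, relying only on Proposition~\ref{itertotfib} and the fact that in a stable category pushout squares are pullback squares; it is more mechanical but perhaps more robust, and it isolates exactly what is needed for the ``furthermore'' clause. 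The paper's approach, on the other hand, yields the decomposition $B \sim A \vee \hofiber(g)$ as a byproduct, which is conceptually illuminating and useful elsewhere.
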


\begin{proof}
Consider the obvious maps $A \vee \hofiber(g) \to B \to A$. Proposition \ref{fibfib} shows the first map to be an equivalence, so that $B \sim A \vee \hofiber(g)$ with the maps appearing in the proposition being the standard inclusions/projections, so that the result is now obvious.
\end{proof}

\begin{remark}
Both of the previous results can be proven in a functorial sense, i.e., with the diagrams depending functorially on some other category. This allows one to prove the result that follows.

\end{remark}

\begin{corollary}\label{doublecube}

Let $\D$ be a stable model category, and let $\mathbb{Z} \colon (0 \to 1 \to 2)^{\times d} \to \D$ be a ``double cube'' which is idempotent in each direction. Then
\begin{equation}
hototcofiber(\mathcal{Z}|_{(0 \to 1)^{\times d}}) \sim hototfiber(\mathcal{Z}|_{(1 \to 2)^{\times d}})
\end{equation}
Furthermore, this equivalence is exhibited by the natural composition \[hototfiber(\mathcal{Z}|_{(1 \to 2)^{\times d}}) \to \mathcal{Z}(1,\cdots,1) \to hototcofiber(\mathcal{Z}|_{(0 \to 1)^{\times d}})\].
\end{corollary}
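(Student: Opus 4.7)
The plan is to proceed by induction on $d$. The base case $d=1$ reduces directly to Proposition \ref{basecase2}: for a $1$-cube the total hofiber is just the $\hofiber$ of the unique edge, the total hocofiber is just the $\hocof$ of that edge, and the idempotence hypothesis supplies the retract $\mathcal{Z}(0) \to \mathcal{Z}(1) \to \mathcal{Z}(2)$ (with $\mathcal{Z}(0) = \mathcal{Z}(2)$ and composite the identity) required by Proposition \ref{basecase2}.

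For the inductive step, I single out the $d$-th coordinate direction and slice $\mathcal{Z}$ to obtain three $(d-1)$-dimensional double cubes $\mathcal{Z}^{(0)}, \mathcal{Z}^{(1)}, \mathcal{Z}^{(2)}$ connected by natural maps $\mathcal{Z}^{(0)} \to \mathcal{Z}^{(1)} \to \mathcal{Z}^{(2)}$; the idempotence in direction $d$ forces $\mathcal{Z}^{(0)} = \mathcal{Z}^{(2)}$ with this composite the identity, and each $\mathcal{Z}^{(i)}$ is itself an idempotent double $(d-1)$-cube. Applying Proposition \ref{itertotfib} to the $d$-th direction (and its evident hocofiber dual) rewrites the two sides of the asserted equivalence as
\[ hototfiber(\mathcal{Z}|_{(1\to 2)^{\times d}}) \sim \hofiber\bigl( hototfiber(\mathcal{Z}^{(1)}|_{(1\to 2)^{\times (d-1)}}) \to hototfiber(\mathcal{Z}^{(2)}|_{(1\to 2)^{\times (d-1)}}) \bigr) \]
\[ hototcofiber(\mathcal{Z}|_{(0\to 1)^{\times d}}) \sim \hocof\bigl( hototcofiber(\mathcal{Z}^{(0)}|_{(0\to 1)^{\times (d-1)}}) \to hototcofiber(\mathcal{Z}^{(1)}|_{(0\to 1)^{\times (d-1)}}) \bigr). \]
Setting $A_i := hototfiber(\mathcal{Z}^{(i)}|_{(1\to 2)^{\times (d-1)}})$, the inductive hypothesis provides natural equivalences $A_i \sim hototcofiber(\mathcal{Z}^{(i)}|_{(0\to 1)^{\times (d-1)}})$, each realized by the natural composition through $\mathcal{Z}^{(i)}(1, \ldots, 1)$. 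Naturality in $i$ lets me assemble the two edge maps above into a single $3$-term sequence $A_0 \to A_1 \to A_2$ with $A_0 = A_2$ and composite the identity, and Proposition \ref{basecase2} applied one final time to this retract then yields $\hocof(A_0 \to A_1) \sim \hofiber(A_1 \to A_2)$ via the natural composite $\hofiber(A_1 \to A_2) \to A_1 \to \hocof(A_0 \to A_1)$, which after unwinding is precisely the composition through $\mathcal{Z}(1,\ldots,1)$ claimed in the statement.

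The main obstacle will be ensuring that the inductive hypothesis supplies enough functoriality to form the $3$-term sequence $A_0 \to A_1 \to A_2$ as a genuine diagram in $\D$, rather than just as three objects with zig-zags of weak equivalences between them. This is where I rely on the functorial strengthening of Proposition \ref{basecase2} flagged in the preceding remark; once that is in place the tracing of the final equivalence through $\mathcal{Z}(1,\ldots,1)$ is routine.
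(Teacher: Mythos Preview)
Your proposal is correct and is precisely the argument the paper intends: the paper's proof is the single line ``This follows inductively by combining Proposition \ref{basecase2} and Proposition \ref{itertotfib},'' and you have unpacked exactly that induction, including the functoriality caveat that the paper records in the Remark immediately preceding the corollary.
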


\begin{proof}
This follows inductively by combining Proposition \ref{basecase2} and Proposition \ref{itertotfib}.

\end{proof}

\subsection{Cross-effects}

Let $\C$ be a pointed simplicial cofibrantly generated model category. 

\begin{definition}
Given a $d$-tuple of objects $X_1,\cdots,X_d$, define a cube \[\X^{cr}_{X_1,\cdots,X_d}\colon \P(S) \to \C\] by
\begin{equation}
\X^{cr}_{X_1,\cdots,X_d}(T) = \bigvee_{i \in S - T} X_i \vee \bigvee_{j \in T} C(X_j)
\end{equation}

\end{definition}

\begin{remark}\label{crossexc}
Note that when all of the $X_i$ are cofibrant the cube obtained is $\P(S)$-projective cofibrant, and that it is, in fact, the left Kan extension of its restriction to $\P_{\leq 1}(S)$, so that it is actually a strongly hococartesian cube.
\end{remark}

\begin{definition}
Let $F \colon \C \to \D$ a homotopy functor.

We set
\begin{equation}
\bar{cr}_n F (X_1, \cdots, X_d)= hototfiber(F \circ \X^{cr}_{X_1,\cdots,X_d})
\end{equation}

and define the cross effect $cr_n F = \bar{cr}_n F \circ Q^{\times d}$.

\end{definition}

\begin{lemma} \label{crossdetect}
Suppose $\D$ a stable model category, and let $F \colon \C \to \D$ be a $d$-excisive functor. Then $F$ is also $d-1$-excisive iff $cr_n F \sim \**$.
\end{lemma}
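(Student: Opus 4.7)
The forward direction is immediate: by Remark~\ref{crossexc}, $\X^{cr}_{X_1,\ldots,X_d}$ is a strongly hococartesian $d$-cube, so a $(d-1)$-excisive $F$ sends it to a hocartesian cube, making its total hofiber $\bar{cr}_d F(X_1,\ldots,X_d)$ contractible.

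For the backward direction, assume $cr_d F \simeq \ast$ and fix a strongly hococartesian $d$-cube $\X$ over $\P(S)$. I would first reduce to the case $\X_\emptyset \simeq \ast$ as follows. Form the augmented star on $\P_{\leq 1}(\{0\}\amalg S)$ by keeping the spokes $X_\emptyset \rightarrowtail X_{\{i\}}$ and adjoining the cone edge $X_\emptyset \rightarrowtail C(X_\emptyset)$, and let $\tilde{\X}$ be its left Kan extension to $\P(\{0\}\amalg S)$. By the argument of Remark~\ref{crossexc}, $\tilde{\X}$ is strongly hococartesian; its ``bottom face'' at $0 \notin T$ is $\X$, while its ``top face'' at $0 \in T$ identifies with the quotient cube $T \mapsto \X(T)/X_\emptyset$, whose $\emptyset$-vertex is contractible. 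By $d$-excisiveness $F(\tilde{\X})$ is hocartesian, and applying the iterative total hofiber (Proposition~\ref{itertotfib}) first in the direction $0$ forces the induced map $\mathrm{tothofiber}(F(\X)) \to \mathrm{tothofiber}(F(\X/X_\emptyset))$ to be a weak equivalence. It therefore suffices to establish the result for a strongly hococartesian $\mathcal{Y}$ with $\mathcal{Y}_\emptyset \simeq \ast$.

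Setting $Y_i = \mathcal{Y}_{\{i\}}$, strong hococartesianness forces $\mathcal{Y}(T) \simeq \bigvee_{i\in T} Y_i$. Now I would apply Corollary~\ref{doublecube} to the double cube $\mathcal{Z}\colon (0\to 1\to 2)^{\times d}\to\C$ whose $i$-th one-dimensional thread is $\ast \to Y_i \to \ast$ (the second map being zero, so the composite is strictly $\mathrm{id}_\ast$), assembled as $\mathcal{Z}(T_1,\ldots,T_d) = \bigvee_i \mathcal{Z}^{(i)}(T_i)$. The lower half $(0\to 1)^{\times d}$ recovers $\mathcal{Y}$, and the upper half $(1\to 2)^{\times d}$ is the cube $\mathcal{W}(T) := \bigvee_{i\notin T} Y_i$. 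Since functoriality forces the composites $F(\ast) \to F(Y_i) \to F(\ast)$ to be strictly $\mathrm{id}_{F(\ast)}$, the cube $F\circ\mathcal{Z}$ is idempotent in each direction, and Corollary~\ref{doublecube} yields $\mathrm{hototcofiber}(F(\mathcal{Y})) \simeq \mathrm{tothofiber}(F(\mathcal{W}))$. The pointwise weak equivalences $\ast \hookrightarrow C(Y_i)$ assemble into a pointwise weak equivalence $\mathcal{W} \xrightarrow{\sim} \X^{cr}_{Y_1,\ldots,Y_d}$, so $\mathrm{tothofiber}(F(\mathcal{W}))\simeq \bar{cr}_d F(Y_1,\ldots,Y_d) \simeq \ast$ by hypothesis; Corollary~\ref{fibcofib} in the stable target $\D$ then delivers $\mathrm{tothofiber}(F(\mathcal{Y})) \simeq \bar{\Omega}^d\,\mathrm{hototcofiber}(F(\mathcal{Y})) \simeq \ast$.

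The main obstacle is the pair of cube-identifications at the heart of the argument: that the cone-augmented Kan extension $\tilde{\X}$ is genuinely strongly hococartesian with the advertised bottom and top faces (requiring enough cofibrancy on the spokes of $\X$ so that the iterative tothofiber identification is honest), and that the double cube $\mathcal{Z}$ is strictly (not merely homotopically) idempotent in each direction, so that Corollary~\ref{doublecube} applies without additional zigzag corrections.
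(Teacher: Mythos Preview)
Your argument is correct and follows the same route as the paper: reduce to $\X_\emptyset\simeq\ast$ by coning off the initial vertex and using $d$-excisiveness on the resulting $(d{+}1)$-cube, then compare the remaining cube to the cross-effect cube via a wedge-built double cube and Corollary~\ref{doublecube}, finishing with Corollary~\ref{fibcofib}. The only substantive difference is your choice of one-dimensional threads $\ast\to Y_i\to\ast$ in place of the paper's $\ast\to X_i\to C(X_i)$: your version makes the idempotence hypothesis of Corollary~\ref{doublecube} hold strictly (at the cost of an extra pointwise equivalence $\mathcal{W}\xrightarrow{\sim}\X^{cr}$), whereas the paper's version lands directly on $\X^{cr}$ but satisfies idempotence only up to the contraction $C(X_i)\simeq\ast$ --- a trade-off, and arguably your bookkeeping is the more careful of the two.
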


\begin{proof}
The ``if'' direction is obvious from Remark \ref{crossexc}.

For the ``only if'' direction, we must show that, if $cr_n F \sim \**$, then $F$ sends any strongly hococartesian cube $\X$ (which can as always be assumed the left Kan extension of a cofibrant restriction to $\P_{\leq 1} (S) $) to a hocartesian cube $F(\X)$. Constructing the cube $\X'(T) = \colim (C(\X(\emptyset)) \gets \X(\emptyset) \to \X(T))$, we immediately obtain a strongly hocartesian $d+1$-cube $\X \to \X'$. Hence, by $\ref{itertotfib}$ and the assumption that $F$ is $d$-excisive, it will suffice to show that $X'$ itself is sent to a hocartesian cube, and we have hence reduced to the case of cubes with $\X(\emptyset) \sim \**$ (which can be replaced by cubes were it is actually $\X(\emptyset)$).

Such a cube $\X$ (again assumed left Kan extension of cofibrant restriction) is totally determined by specifying $d$ cofibrant objects $X_1, \cdots, X_d$. Given those, consider the ``double cube'' $\mathcal{Z}\colon (0 \to 1 \to 2)^{\times d} \to \D$ given by ``wedging'' the one dimensional ``double cubes'' $\** \to X_i \to C(X_i)$. 

We now apply Proposition \ref{doublecube} to $\mathcal{Z}$; $\mathcal{Z}|_{(1 \to 2)^{\times d}}$ is the cube appearing in the definition of $cr_n F$; while $\mathcal{Z}|_{(0 \to 1)^{\times d}}$ is the cube $\X$. Hence $hototcofiber (\X) \sim \**$, which by \ref{fibcofib} also implies $hototfiber (\X) \sim \**$, as desired.
\end{proof}

\begin{definition}
Now let $L \colon \C^{n} \to \D$ be a symmetric multi-homotopy functor (see Section \ref{factspectra} for the definition). Set $\Delta_n L \colon \C \to \D$ by $\Delta_n L(X) = L(X,\cdots,X)_{h \Sigma_n}$.
\end{definition}

(Here, has usual, we use functorially chosen cofibrant  replacements to construct the hoorbits)

We thus now have constructions $cr_n$ transforming a homotopy functor into a symmetric $n$-homotopy functor, and $\Delta_n$ doing the reverse.

The following results (all of which are adapted from section 3 of \cite{Goodwillie}) show that, when the target category $\D$ is stable, these are ``inverse constructions'' when restricted to $n$-homogeneous functors and $n$-symmetric multilinear functors, respectively. 

Indeed, that $\Delta_n$ takes multilinear functors to $n$-excisive functors is Proposition \ref{Good3.4} (The usage of hoorbits is inconsequential since the target category $\D$ is assumed stable). That $L \circ \Delta_n$ is then also $n$-reduced (and hence homogeneous), i.e., $P_{n-1}F \sim \**$, follows from the following two propositions:

\begin{proposition}
If $\C^n \xrightarrow{L} \D$ is a $(1,\cdots,1)$-reduced homotopy functor, then for any $X$ cofibrant, the map
\[ (L \circ \Delta)(X) \xrightarrow{\bar{t}_{n-1}(L \circ \Delta)} \bar{T}_{n-1}(L \circ \Delta)(X)\]
factors through a contractible object.
\end{proposition}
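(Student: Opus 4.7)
The plan is to construct an auxiliary diagram whose comparison map visibly factors through a contractible object, and then to identify the comparison map we actually care about with it via naturality.

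Identifying $S$ (with $\abs{S} = n$) with the $n$ coordinates of $\C^n$, I would introduce an auxiliary $\P(S)$-diagram $\tilde{\X} \colon \P(S) \to \C^n$ defined coordinate-wise by
\[\tilde{\X}(T)_i = X \text{ if } i \notin T, \qquad \tilde{\X}(T)_i = C(X) \text{ if } i \in T,\]
with structural maps on each coordinate given by either the cone inclusion $c_X \colon X \to C(X)$ or the identity. Thus $\tilde{\X}(\emptyset) = \Delta(X)$, while for every nonempty $T$ at least one coordinate of $\tilde{\X}(T)$ is the contractible object $C(X)$.

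Next I would define a natural transformation $\eta \colon \tilde{\X} \to \Delta(\X_X)$ whose $i$-th coordinate at $T$ is the structural map $\X_X(\emptyset) \to \X_X(T)$ when $i \notin T$, and $\X_X(\{i\}) \to \X_X(T)$ when $i \in T$. Each naturality square for an inclusion $T \subseteq T'$ reduces (in all case analyses on the position of $i$) to a composite of structural maps of $\X_X$ starting from $\emptyset$ or $\{i\}$ and ending at $\X_X(T')$, and hence commutes by functoriality of $\X_X$. Note that $\eta_\emptyset$ is the identity on $(X, \ldots, X)$.

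Applying $L$ to $\eta$ and invoking naturality of the comparison map then yields the commutative square
\[\xymatrix{
L(\tilde{\X}(\emptyset)) \ar@{=}[d] \ar[r] & \holim_{T \in \P_1(S)} L(\tilde{\X}(T)) \ar[d] \\
(L \circ \Delta)(X) \ar[r]^-{\tilde{\bar{t}}_{n-1}(L \circ \Delta)} & \tilde{\bar{T}}_{n-1}(L \circ \Delta)(X).
}\]
Because $L$ is a homotopy functor and $(1,\ldots,1)$-reduced, and because each $\tilde{\X}(T)$ with $T \neq \emptyset$ has a contractible coordinate, every $L(\tilde{\X}(T))$ for $T \neq \emptyset$ is weakly equivalent to $\**$. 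The holim in the upper right is therefore a homotopy limit of a diagram levelwise equivalent to the constant $\**$ diagram, and is itself contractible. This provides the desired factorization for $\tilde{\bar{t}}_{n-1}(L\circ\Delta)$; applying the cofibrant factor $Q$ transfers it to $\bar{t}_{n-1}(L\circ\Delta)$.

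The only delicate step is the case analysis verifying the naturality of $\eta$; once in hand, the rest is formal. The substantive idea is that replacing the diagonal cube $\Delta(\X_X)$ by the mixed cube $\tilde{\X}$ places a $C(X)$ in some coordinate of every non-initial vertex, which precisely matches the $(1,\ldots,1)$-reducedness hypothesis on $L$.
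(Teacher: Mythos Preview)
Your argument is correct and in fact more direct than the paper's. The paper factors through a different contractible intermediate: it passes to the full multi-cube $(U_1,\ldots,U_n)\mapsto L(\X_X(U_1),\ldots,\X_X(U_n))$ indexed by $\P_1(S)^{\times n}$, restricts to the intermediate poset $\mathcal{E}=\{(U_1,\ldots,U_n):\exists s,\ s\in U_s\}$ (which contains the diagonal copy of $\P_1(S)$), and then proves $\holim_{\mathcal{E}}$ is contractible by a homotopy-initiality argument identifying it with the holim over $\mathcal{E}^*=\{(U_1,\ldots,U_n):\exists s,\ U_s=\{s\}\}$, where the values are visibly contractible by reducedness. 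You instead build a map of $\P(S)$-cubes $\tilde{\X}\to\Delta\circ\X_X$ in $\C^n$ and factor directly through $\holim_{\P_1(S)}L(\tilde{\X})$; since every non-initial vertex of $\tilde{\X}$ already has a cone in one slot, no cofinality step is needed. The paper's route is closer to Goodwillie's original argument (working inside the product cube), while yours trades that for a short naturality check on $\eta$ and uses only the functoriality of the comparison map under maps of cubes.
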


\begin{proof}
Consider the the maps
\begin{equation}
\begin{split}L(X,\cdots,X) & \to \holim_{\P_1(S)^{\times n}}L(\X_X(U_1),\cdots,\X_X(U_n)) \\
& \to \holim_{\mathcal{E}}L(\X_X(U_1),\cdots,\X_X(U_n)) \\
& \to \holim_{\P_1(S)}L(\X_X(U_1),\cdots,\X_X(U_n))
\end{split}
\end{equation}

where $\mathcal{E}$ is the subset of $\P_1(S)^{\times n}$ such that for some $s$ one has $s \in U_s$, and $\P_1(S)$ denotes the ``diagonal'' subset of $\P_1(S)^{\times n}$.

Notice that$\P_1(S)^{\times n} \supset \mathcal{E} \supset \P_1(S)$ so that the maps are just obtained by restriction.

We will now be done if we can show that $\holim_{\mathcal{E}}L(\X_X(U_1),\cdots,\X_X(U_n))$ is contractible. To see this, we prove that this holim is equivalent to the holim over $\mathcal{E}^*$, the subset of $\P_1(S)^{\times n}$ such that for some $s$ one has $U_s = \{s\}$. 

We show homotopy initiality of $\mathcal{E}^*$ in $\mathcal{E}$. In order words (using Theorem 8.5.5 from \cite{Emily}), we need to show that for every $(U_1,\cdots,U_s) \in \mathcal{E}$, $\mathcal{E}^* \downarrow (U_1,\cdots,U_s)$ is contractible after realization. But, setting $H = \{s \colon s \in U_s\}$, this is a union $\bigcup_{h \in H}\abs{\prod_{s \neq h} \P_1(U_s)}$ of contractible subspaces with contractible intersections ($\bigcap_{h \in H'}\abs{\prod_{s \neq h} \P_1(U_s)} \simeq \abs{\prod_{s \notin H'} \P_1(U_s)}$), hence itself contractible (by appealing, for instance, to the fact that the union is then a holim).

Finally one needs to show that $\holim_{\mathcal{E'}}L(\X_X(U_1),\cdots,\X_X(U_n))$ is contractible, but this is obvious since it is always $L(\X_X(U_1),\cdots,\X_X(U_n)) \sim \**$ on points of $\mathcal{E}'$.
\end{proof}

\begin{proposition}
If $L \colon\C^n \to \D$, $\D$ assumed stable, is a $(1,\cdots,1)$-reduced homotopy functor, then $L \circ \Delta$ is $n$-reduced. If, further, $L$ is symmetric, then so is $\Delta_n L$.
\end{proposition}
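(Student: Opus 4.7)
The plan is to combine the previous proposition with the functor $P_{n-1}$ to deduce $P_{n-1}(L\circ\Delta) \sim \**$, which is the desired $n$-reducedness. Concretely, I would apply $P_{n-1}$ to the natural transformation $\bar{t}_{n-1}F \colon F \to \bar{T}_{n-1}F$ (where $F = L\circ\Delta$) furnished by the previous proposition, and show that the resulting map is simultaneously nullhomotopic and a weak equivalence, forcing $P_{n-1}F \sim \**$.

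First, the previous proposition provides a natural factorization $F \to H \to \bar{T}_{n-1}F$ in which $H(X) = \holim_{\mathcal{E}} L(\X_X(U_1),\ldots,\X_X(U_n))$ is pointwise contractible (and manifestly functorial in $X$). Since each iterate $\bar{T}_{n-1}^\alpha$ is built from finite holims (which preserve pointwise contractibility) and from transfinite hocolims of constant-contractible diagrams (which remain contractible), $P_{n-1}H$ is itself pointwise contractible. Hence $P_{n-1}(\bar{t}_{n-1}F)$ factors through $P_{n-1}H$ and is nullhomotopic. On the other hand, the same map $P_{n-1}(\bar{t}_{n-1}F) \colon P_{n-1}F \to P_{n-1}\bar{T}_{n-1}F$ is a weak equivalence: the transfinite tower defining $P_{n-1}\bar{T}_{n-1}F$ agrees, up to reindexing, with the shift-by-one of the tower defining $P_{n-1}F$, and such a shift is cofinal in the ordinal $\kappa$ (equivalently, by Theorem \ref{univ}, both sides exhibit the universal $(n-1)$-excisive approximation of $F$). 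Combining the two observations, a map in $Ho(\D)$ that is at once zero and an isomorphism forces the identity on its source to factor through zero, yielding $P_{n-1}F \sim \**$.

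For the symmetric statement, I would reduce to the first part by replacing $L$ with the auxiliary multi-functor
\begin{equation*}
L^{\mathrm{sym}}(X_1,\ldots,X_n) = \hocolim_{\sigma \in \Sigma_n} L(X_{\sigma(1)},\ldots,X_{\sigma(n)}),
\end{equation*}
where the $\Sigma_n$-action permuting the indices $\sigma$ is supplied by the symmetric structure of $L$. This $L^{\mathrm{sym}}$ is again a $(1,\ldots,1)$-reduced homotopy functor: if some $X_i \sim \**$, then in every term $X_i$ appears in position $\sigma^{-1}(i)$ where $L$ is reduced, so each $L(X_{\sigma(1)},\ldots,X_{\sigma(n)}) \sim \**$, and the hocolim is therefore contractible. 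Evaluating on the diagonal gives $L^{\mathrm{sym}} \circ \Delta(X) = \hocolim_\sigma L(X,\ldots,X) = L(X,\ldots,X)_{h\Sigma_n} = \Delta_n L(X)$, so applying the first part of the proposition to $L^{\mathrm{sym}}$ in place of $L$ yields that $\Delta_n L$ is $n$-reduced. The step I expect to require the most care is the cofinality argument identifying $P_{n-1}\bar{T}_{n-1}F$ with $P_{n-1}F$, since it involves transfinite bookkeeping of the tower construction and of the cofibrant replacements implicit in the derived functor $P_{n-1}$.
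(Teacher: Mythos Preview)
Your treatment of the first assertion is correct and is essentially the paper's argument, only reorganized: the paper phrases it as ``interpolate the tower $F \to \bar T_{n-1}F \to \bar T_{n-1}^2 F \to \cdots$ by contractible objects,'' which is exactly the content of your pair of observations that $P_{n-1}(\bar t_{n-1}F)$ is null (factoring through $P_{n-1}H \sim \**$) and a weak equivalence (cofinal shift of the tower). One small caution: your parenthetical appeal to Theorem~\ref{univ} is circular here, since it presupposes the very equivalence you are proving; the cofinality argument alone carries the weight.

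The second assertion, however, has a genuine gap. Your functor $L^{\mathrm{sym}}(X_1,\dots,X_n) = \hocolim_{\sigma \in \Sigma_n} L(X_{\sigma(1)},\dots,X_{\sigma(n)})$ is not well-defined as written. The symmetric structure of $L$ supplies maps $L(X_1,\dots,X_n) \to L(X_{\sigma(1)},\dots,X_{\sigma(n)})$ between \emph{different} objects, so off the diagonal there is no $\Sigma_n$-action on any single object over which to take homotopy orbits. If the indexing category is the discrete set $\Sigma_n$, the hocolim is a coproduct and on the diagonal yields $\bigvee_{\sigma} L(X,\dots,X)$, not $L(X,\dots,X)_{h\Sigma_n}$. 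If instead one forms $\bigvee_{\sigma} L(X_{\sigma(1)},\dots,X_{\sigma(n)})$ with the $\Sigma_n$-action coming from the symmetric structure, that action is free on the index set and its homotopy orbits on the diagonal collapse to $L(X,\dots,X)$ rather than $\Delta_n L(X)$. In short, there is no evident multi-functor $L^{\mathrm{sym}}\colon \C^n \to \D$ whose diagonal restriction is $\Delta_n L$ and which is $(1,\dots,1)$-reduced.

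The paper avoids this by arguing directly that $P_{n-1}$ commutes with $(\cdot)_{h\Sigma_n}$ when $\D$ is stable: $T_{n-1}$ is a finite holim, finite holims agree with finite hocolims in a stable category, and hocolims commute with the hocolim $(\cdot)_{h\Sigma_n}$; the transfinite hocolim defining $P_{n-1}$ then also commutes. Hence $P_{n-1}(\Delta_n L) \simeq (P_{n-1}(L\circ\Delta))_{h\Sigma_n} \simeq \**_{h\Sigma_n} \simeq \**$. This is where the stability hypothesis actually enters for the symmetric statement; your reduction attempts to bypass it and cannot succeed without an equivalent input.
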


\begin{proof}
We need to prove that $P_{n-1}(L \circ \Delta)= \bar{T}_{n-1}^\lambda L \circ Q$ is the trivial functor. But, this is immediate from the previous result, since we can then ``interpolate'' an infinite direct colim by contractible objects (in fact, it follows that even $\bar{T}_{n-1}^{\aleph_0} L \circ Q$ is itself the zero functor, where $\aleph_0$ denotes the smallest infinite ordinal). Hence $L \circ \Delta$ is $n$-reduced.

But the case of $\Delta_n L = (L \circ \Delta)_{h \Sigma_n}$ then follows immediately, since, when the target is stable, the homotopy orbits commute with the construction of $P_{n-1}$.

\end{proof}

The converse, i.e., that $cr_n$ sends $n$-homogeneous functors to multilinear ones, is a particular case of the following:

\begin{proposition}
If $F$ is $n$-excisive then for $0 \leq m \leq n$ the functor $cr_{m+1}F$ is $(n-m)$ excisive in each variable.
\end{proposition}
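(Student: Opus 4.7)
By the evident $\Sigma_{m+1}$-symmetry of the cross-effect cube $\X^{cr}$, it suffices to prove $(n-m)$-excisiveness in the first variable. Fix cofibrant $X_2,\ldots,X_{m+1}$ and a strongly hococartesian $(n-m+1)$-cube $\mathcal{Y}\colon\P(S')\to \C$; we must show the $(n-m+1)$-cube $T'\mapsto cr_{m+1}F(\mathcal{Y}(T'),X_2,\ldots,X_{m+1})$ is hocartesian, which in our pointed simplicial setup amounts to showing that its total hofiber is contractible.

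The strategy is to package everything as a single $(n+2)$-cube and reduce to one invocation of the $n$-excisiveness hypothesis on a well-chosen $(n+1)$-dimensional face. Setting $|S|=m+1$, define
\[\mathcal{Z}\colon\P(S\amalg S')\to\C,\qquad \mathcal{Z}(T,T')=\X^{cr}_{\mathcal{Y}(T'),X_2,\ldots,X_{m+1}}(T).\]
Two applications of Proposition \ref{itertotfib} identify the total hofiber of $T'\mapsto cr_{m+1}F(\mathcal{Y}(T'),\ldots)$ with the total hofiber of $F(\mathcal{Z})$ over $\P(S\amalg S')$. Splitting off the singleton $\{1\}\subset S$ (the coordinate that cones off the slot $X_1=\mathcal{Y}(T')$), a further application of Proposition \ref{itertotfib} rewrites this as the $\hofiber$ of the natural map between the total hofibers (over $(S\setminus\{1\})\amalg S'$) of the two $(n+1)$-dimensional sub-cubes $F(\mathcal{Z}|_{1\notin T})$ and $F(\mathcal{Z}|_{1\in T})$.

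The restriction $\mathcal{Z}|_{1\notin T}$, namely $(T_2,T')\mapsto \mathcal{Y}(T')\vee \X^{cr}_{X_2,\ldots,X_{m+1}}(T_2)$, is a wedge (= coproduct in $\C$) of a strongly hococartesian $(n-m+1)$-cube and a strongly hococartesian $m$-cube. Because $\vee$ distributes over the iterated pushouts expressing strong hococartesianness, this wedge is itself a strongly hococartesian $(n+1)$-cube, and the $n$-excisiveness of $F$ immediately makes $F(\mathcal{Z}|_{1\notin T})$ hocartesian, so its total hofiber is contractible. The restriction $\mathcal{Z}|_{1\in T}$, namely $(T_2,T')\mapsto C(\mathcal{Y}(T'))\vee \X^{cr}_{X_2,\ldots,X_{m+1}}(T_2)$, has every $T'$-direction map given by the wedge of a map between contractible objects $C(\mathcal{Y}(T'_0))\to C(\mathcal{Y}(T'_1))$ with an identity, hence a weak equivalence. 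Iterating Proposition \ref{itertotfib} to take the total hofiber first in $T_2$ (yielding a $T'$-cube weakly equivalent to the constant functor $cr_m F(X_2,\ldots,X_{m+1})$) and then in $T'$ (total hofiber of a constant cube being contractible, since $\P_1(S')$ has a terminal object) shows this total hofiber is contractible as well. The total hofiber of $F(\mathcal{Z})$ is therefore a $\hofiber$ between contractibles, hence contractible, as required.

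The principal technical point is the claim that the wedge of two strongly hococartesian cubes is again strongly hococartesian; this reduces to the fact that the coproduct $\vee$ in $\C$ commutes with the colimits defining strong hococartesianness (i.e., preserves the left Kan extension from $\P_{\leq 1}$), but a clean verification will likely need either a small separate lemma or an explicit pushout check. Everything else is routine bookkeeping with iterative total hofibers via Proposition \ref{itertotfib}, together with the observation that each $C(\mathcal{Y}(T'))$ is contractible for pointwise cofibrant $\mathcal{Y}$.
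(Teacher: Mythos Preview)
Your argument works, with one caveat: the reduction ``hocartesian $\Leftrightarrow$ total hofiber contractible'' requires $\D$ to be stable (in a general pointed $\D$, e.g.\ the map $*\to S^1$ in pointed spaces has contractible hofiber without being a weak equivalence). Stability of $\D$ is ambient in this section but is not hypothesised in the proposition, and the paper's own proof does not invoke it---so as written you are proving a slightly weaker statement than the paper does.

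Granting that, your route is genuinely different from the paper's. The paper argues by a short induction on $m$: using the identity
\[
cr_{m+1}F(X_1,\dots,X_m,A)\ \simeq\ cr_m\bigl(F_{\vee A}\bigr)(X_1,\dots,X_m),\qquad F_{\vee A}(X)=\hofiber\bigl(F(X\vee A)\to F(A)\bigr),
\]
(which is just an instance of Proposition~\ref{itertotfib}) together with the fact that $n$-excisiveness of $F$ forces $(n-1)$-excisiveness of $F_{\vee A}$, the induction hypothesis then gives $(n-m)$-excisiveness in the first $m$ slots, and symmetry handles the last. Your direct $(n+2)$-cube construction trades that recursion for a single explicit invocation of $n$-excisiveness on a well-chosen $(n+1)$-face, at the cost of the auxiliary lemma that a wedge of strongly hococartesian cubes in disjoint sets of directions is again strongly hococartesian (which is correct and does reduce, as you say, to coproducts commuting with pushouts). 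The paper's proof is shorter and stability-free; yours makes the geometry of the cross-effect cube more visible but leans on the stable equivalence between hocartesianness and vanishing total hofiber.
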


\begin{proof}
Induction on $m$. The base case $m=0$ is obvious.

For the induction step, notice that, for $F_{\vee A}(X)= \hofiber(F(X\vee A) \to F(A))$ we have 
\[\bar{cr}_{m+1}F(X_1,\cdots,X_m,A) \sim \bar{cr}_{m}F_{\vee A}(X_1,\cdots,X_m)\]
whenever the $X_i, A$ are cofibrant (this is just an application of \ref{itertotfib}), and hence $cr_{m+1}F(X_1,\cdots,X_m,A) \sim cr_{m}F_{\vee A}(X_1,\cdots,X_m)$ in general.

But now the result follows since $n$-excisiveness of $F$ implies $(n-1)$-excisiveness of $F_{\vee A}$.
\end{proof}

We now turn to the task of proving that the $\Delta_n$ and $cr_n$ are indeed inverse constructions when restricted appropriately. We first tackle the case of multilinear functors.

\begin{proposition}\label{multarecross}
Let $L$ be any symmetric multilinear functor. Then $cr_n (\Delta_n L) \sim L$.
\end{proposition}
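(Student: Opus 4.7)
The plan is to reduce the computation to one for $cr_n(L\circ \Delta)$ by commuting $h\Sigma_n$ past the total hofiber, and then to expand using multilinearity and collapse the resulting sum via the $\Sigma_n$-action.

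First, since the target $\D$ is stable, taking $h\Sigma_n$-orbits commutes with the total hofiber construction: by Corollary \ref{fibcofib} the total hofiber agrees up to a $\bar{\Omega}^n$ shift with the total hocofiber, and the latter manifestly commutes with any hocolimit. Hence
\[ cr_n(\Delta_n L)(X_1,\ldots, X_n) \sim \bigl(cr_n(L\circ \Delta)(X_1,\ldots, X_n)\bigr)_{h\Sigma_n},\]
and it remains to identify the inner expression.

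Next I would use multilinearity. Since each slot of $L$ is $1$-excisive and $\D$ is stable, applying $L$ to the cocartesian square $\** \leftarrow A \to A\vee B$ in each variable yields a natural splitting $L(W_1,\ldots, W_n) \sim \bigoplus_{(a_1,\ldots,a_n)} L(Y^{(1)}_{a_1},\ldots,Y^{(n)}_{a_n})$ whenever $W_i = \bigvee_a Y^{(i)}_a$ is a finite wedge. Plugging in $W_T = \bigvee_{i\notin T}X_i \vee \bigvee_{j\in T}C(X_j)$ uniformly in every slot, and using that finite direct sums commute with total hofibers in a stable category, I obtain a decomposition
\[ cr_n(L\circ \Delta)(X_1,\ldots, X_n) \sim \bigoplus_{\sigma\colon [n]\to [n]} \hofiber_{T\in \P(S)} L\bigl(Y^T_{\sigma(1)},\ldots, Y^T_{\sigma(n)}\bigr),\]
where $Y^T_k$ equals $X_k$ when $k\notin T$ and $C(X_k)$ when $k\in T$.

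For $\sigma$ not surjective, choose $k\notin \sigma([n])$: the cube above is constant along the direction $T\mapsto T\cup\{k\}$, so its total hofiber iterates down to the hofiber of an identity and is contractible. For $\sigma\in \Sigma_n$ a bijection, the cube is (after reindexing slots via symmetry of $L$) $L$ applied to the external product of the $1$-cubes $X_k\to C(X_k)$; iterating multilinearity slot-by-slot, together with $C(X_k)\simeq \**$ (hence $\hofiber(X_k\to C(X_k))\sim X_k$), identifies its total hofiber naturally with $L(X_{\sigma(1)},\ldots, X_{\sigma(n)})\sim L(X_1,\ldots, X_n)$. The $\Sigma_n$-action coming from the $h\Sigma_n$ in $\Delta_n L$ permutes the slots of $L$, and so permutes the summands indexed by $\sigma\in \Sigma_n$ \emph{freely and transitively} through the symmetry isomorphisms of $L$; taking $h\Sigma_n$-orbits of such a free $\Sigma_n$-set of summands retains a single copy, yielding $L(X_1,\ldots,X_n)$ as required.

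The main technical obstacle will be keeping careful track of the two distinct $\Sigma_n$-actions in play — the one permuting slots of $L$ (inherited from the $h\Sigma_n$ in $\Delta_n L$) and the one permuting the arguments $X_1,\ldots, X_n$ of $cr_n$ — and verifying that, under the natural identifications produced by the wedge decomposition, the slot-action really does act freely on the indexing set $\Sigma_n$ of bijections. Once this bookkeeping is set up correctly, freeness forces the $h\Sigma_n$-computation to collapse to one copy of $L$, which is exactly the content of the proposition.
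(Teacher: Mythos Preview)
Your proposal is correct and follows essentially the same route as the paper: commute $h\Sigma_n$ past the total hofiber using stability, decompose via multilinearity into summands indexed by functions $\underline{n}\to\underline{n}$, discard the non-surjective ones as having contractible total hofiber, and observe that the $\Sigma_n$-action freely permutes the remaining permutation-indexed summands so that the orbits collapse to a single copy of $L$. The only cosmetic difference is that the paper phrases the decomposition as a product of cubes $\mathcal{Y}_\pi$ before taking the total hofiber, whereas you take the total hofiber first and then split; in a stable target these are interchangeable.
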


\begin{proof}

As should be expected by now, it will suffice to produce a zig zag $ \xymatrix{
L \ar@{~>}[r] & \bar{cr}_n (\Delta_n L)
}$ that is an equivalence on cofibrant objects.

First notice that
\begin{equation}
\begin{split}
\bar{cr}_n (\Delta_n L)(X_1,\cdots,X_n) & =hototfiber(\Delta_n L(\X_{X_1,\cdots,X_n}^{cr})) \\
&= hototfiber(L(\X_{X_1,\cdots,X_n}^{cr}, \cdots, \X_{X_1,\cdots,X_n}^{cr})_{h \Sigma_n}) \\
& \sim hototfiber(L(\X_{X_1,\cdots,X_n}^{cr}, \cdots, \X_{X_1,\cdots,X_n}^{cr}))_{h \Sigma_n}
\end{split}
\end{equation}
where in the last step we use that the target category $\D$ is stable.

Recalling that $\X^{cr}_{X_1,\cdots,X_d}(T) = \bigvee_{i \in S - T} X_i \vee \bigvee_{j \in T} C(X_j)$ one gets, for any map $\pi \colon \underline{n} \to S - T$ a map $L(\X_{X_1,\cdots,X_n}^{cr}, \cdots, \X_{X_1,\cdots,X_n}^{cr})(T) \to L(X_{\pi(1)},\dots,X_{\pi(n)})$. In fact, putting these all together and appealing to multilinearity of $L$, one has that the map 
\[L(\X_{X_1,\cdots,X_n}^{cr}, \cdots, \X_{X_1,\cdots,X_n}^{cr})(T) \xrightarrow{\sim} \prod_{\pi \colon \underline{n} \to S - T}{L(X_{\pi(1)},\dots,X_{\pi(n)})}\]
is a w.e. (when the $X_i$ are cofibrant).

Since these equivalences are natural in $T$, they can be reinterpreted as a map of cubes
\[L(\X^{cr},\cdots,\X^{cr}) \to \prod_{\pi \colon \underline{n} \to \underline{n}} \mathcal{Y}_{\pi}\]
where $\mathcal{Y}_{\pi}$ is the cube with $\mathcal{Y}_{\pi}(T) = L(X_{\pi(1)},\dots,X_{\pi(n)})$ if $\pi(\underline{n}) \subset T$, and $\mathcal{Y}_{\pi}(T) = \**$ otherwise.

But now one notices that when $\pi$ is not a permutation, and hence not surjective, the cube $\mathcal{Y}_{\pi}$ is cartesian (it is constant on any direction $s \notin \pi(\underline{n})$), so we can ignore those factors. On the other hand, when $\pi$ is a permutation, the cubes are $\**$ on all entries except the initial one $L(X_{\pi(1)},\dots,X_{\pi(n)})$, which is hence the tothofiber. Furthermore, these factors are freely permuted by the $\Sigma_n$ action, so one finally gets 
\[hototfiber(L(\X_{X_1,\cdots,X_n}^{cr}, \cdots, \X_{X_1,\cdots,X_n}^{cr}))_{h \Sigma_n} \sim L(X_1,\cdots,X_n)\]
as intended.
\end{proof}

\begin{remark}
Following the steps of the previous proof one obtains a more ``explicit'' description of the zig zag  $\xymatrix{
L \ar@{~>}[r]^\theta & \bar{cr}_n (\Delta_n L)}$. Indeed, consider the natural map $L(X_1,\cdots,X_n) \xrightarrow{\iota=(\iota_1,\cdots,\iota_n)} L(Z,\cdots,Z)$ where $Z=\vee X_i$ and the $\iota_i$ are the inclusions $X_i \to Z$.

The previous proof then shows that the composite $L(X_1,\cdots,X_n) \xrightarrow{\iota} L(Z,\cdots,Z)\to (L(Z,\cdots,Z))_{h \Sigma_n}$ hits precisely $\bar{cr}_n (\Delta_n L)$ (which by Corollary \ref{doublecube} is a summand of $L(Z,\cdots,Z))_{h \Sigma_n}$.
Hence we have a commutative diagram (in the homotopy category of functors\footnote{Technically, as written here, the reverse directions needed for the zigzags might only be w.e. on cofibrant objects.})
\[
\xymatrix{L(X_1,\cdots,X_n)\ar[r]^\iota \ar[d]^\theta & L(Z,\cdots,Z) \ar[d] \\
\bar{cr}_n(\Delta_n L)(X_1,\cdots,X_n) \ar[r]^\epsilon & L(Z,\cdots,Z)_{h \Sigma_n}
}\]

\end{remark}

We are now in a position to prove the converse result:

\begin{proposition}\label{homogarecross}
Let $F$ be any $n$-homogeneous functor. Then $\Delta_n (cr_n F) \sim F$.
\end{proposition}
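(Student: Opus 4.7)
The plan is to produce a natural comparison map $\tau_F \colon \Delta_n(cr_n F) \to F$, verify both sides are $n$-homogeneous, and then exhibit the hofiber of $\tau_F$ as an $n$-homogeneous functor with vanishing $n$-th cross effect, which by Lemma \ref{crossdetect} will then force it to be trivial.

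To construct $\tau_F$, recall that the total hofiber comes with a canonical map to the initial corner,
\[cr_n F(X, \cdots, X) \to F(\X^{cr}_{X, \cdots, X}(\emptyset)) = F(X^{\vee n}),\]
which is $\Sigma_n$-equivariant for the obvious permutation actions. Composing with $F$ applied to the fold map $\nabla \colon X^{\vee n} \to X$ produces a $\Sigma_n$-equivariant map into $F(X)$ (carrying the trivial action), and passing to homotopy orbits delivers $\tau_F$. Both source and target are $n$-homogeneous: for $F$ by hypothesis; and for $\Delta_n(cr_n F)$ by combining the propositions above, since $cr_n F$ is symmetric multilinear (the $n$-th cross effect of the $n$-excisive $F$ is $1$-excisive in each variable by the ``$(n-m)$-excisive in each variable'' proposition) and is $(1, \cdots, 1)$-reduced (the defining cube of the cross effect is constant in any direction whose input is contractible).

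Letting $G$ denote the hofiber of $\tau_F$, the hofiber proposition for $d$-excisive functors gives that $G$ is $n$-excisive; and since $P_{n-1}$ commutes with finite homotopy limits by Proposition \ref{baspnprop}, vanishing of $P_{n-1}$ on both source and target of $\tau_F$ yields $P_{n-1} G \sim \**$. It therefore suffices to prove that $cr_n(\tau_F)$ is an equivalence: indeed, $cr_n$ is itself a total hofiber and so commutes with the hofiber $G$, so then $cr_n G \sim \**$, Lemma \ref{crossdetect} upgrades $G$ to $(n-1)$-excisive, and finally $G \sim P_{n-1} G \sim \**$.

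To see that $cr_n(\tau_F)$ is an equivalence, I would identify it as a homotopy inverse of the equivalence $\sigma_L \colon L \to cr_n(\Delta_n L)$ supplied by Proposition \ref{multarecross} (in the explicit form given in the subsequent remark), applied to $L = cr_n F$. Unwinding definitions, the composite $cr_n(\tau_F) \circ \sigma_{cr_n F}$ is induced by the codiagonal $\nabla \colon Z = \bigvee X_i \to X$ together with the canonical inclusions $X_i \to Z$, and a diagram chase using the splitting of Corollary \ref{doublecube} (which identifies $cr_n(\Delta_n L)$ with the ``permutation summand'' of $L(Z, \cdots, Z)_{h \Sigma_n}$) should show this composite is homotopic to the identity on $cr_n F$. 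Since $\sigma_{cr_n F}$ is an equivalence by multilinearity of $cr_n F$, this triangle identity then forces $cr_n(\tau_F)$ to be one as well. The main obstacle is precisely this last diagram-chase: matching $cr_n(\tau_F)$ with a homotopy inverse of $\sigma$ requires careful bookkeeping of the $\Sigma_n$ actions, of the splitting of $L(Z, \cdots, Z)_{h\Sigma_n}$ into summands indexed by maps $\underline n \to \underline n$, and of the fact that only the permutation summand contributes nontrivially to the total hofiber.
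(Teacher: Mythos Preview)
Your proposal is correct and follows essentially the same route as the paper: your comparison map $\tau_F$ is the paper's $\gamma$, your reduction via the hofiber $G$ is what the paper compresses into the phrase ``by \ref{crossdetect}'', and your strategy of precomposing $cr_n(\tau_F)$ with the equivalence $\sigma_{cr_nF}$ from Proposition \ref{multarecross} and checking the composite is the identity is exactly the paper's $\bar{cr}_n(\gamma)\circ\theta$ argument. The paper carries out the final diagram chase you flag as the main obstacle via two explicit commutative squares, reducing it to the observation that the composite $Z \xrightarrow{\coprod \iota_i} \coprod Z \xrightarrow{f} Z$ is the identity (note your ``codiagonal $\nabla\colon Z=\bigvee X_i\to X$'' is a slip---the relevant fold map is $\coprod_n Z \to Z$, not a map out of $\bigvee X_i$).
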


\begin{proof}
As before, it will suffice to produce a zig zag $\xymatrix{
\Delta_n (\bar{cr}_n F) \ar@{~>}[r]^{\gamma} & F}$ that is a w.e. on cofibrant objects.

This will be the map (best viewed as a map in the homotopy category of functors)
\[\Delta_n (\bar{cr}_n F)(Z) = ((\bar{cr}_nF)(Z,\cdots,Z))_{h \Sigma_n} \xrightarrow{\gamma} F(Z)_{\Sigma_n} = F(Z)\]
induced by the map (best viewed as a map in the homotopy category of functors $\C \to \D^{\Sigma_n})$.
\[(\bar{cr}_nF)(Z,\cdots,Z) \xrightarrow{\epsilon} F(\coprod{Z}) \xrightarrow{F(f)} F(Z)\]
where $f \colon \coprod{Z} \to Z$ denotes the fold map.

To prove $\gamma$ a w.e. it suffices, by \ref{crossdetect}, to show that $\bar{cr}_n(\gamma)$ is.

In turn, from the previous result it will suffice to prove that the composite
\[\bar{cr}_n F \xrightarrow{\theta} \bar{cr}_n \Delta_n \bar{cr}_n F \xrightarrow{\bar{cr}(\gamma)} \bar{cr}_n F\]

is a w.e.. And this in turn will follow if the composite $\bar{cr}_n F(X_1,\dots,X_n) \xrightarrow{\bar{cr}_n(\gamma) \circ \theta} \bar{cr}_n F(X_1,\dots,X_n) \xrightarrow{\epsilon} F(Z)$ is homotopic to $\epsilon$ (this is because, according to \ref{doublecube}, $\epsilon$ is a homotopy injection).

This now follows from considering the following commutative diagrams
\[
\xymatrix{(\bar{cr}_n F)(X_1,\cdots,X_n) \ar[r]^{\iota} \ar[d]^{\theta} & (\bar{cr}_n F)(Z,\cdots,Z) \ar[d]\\
(\bar{cr}_n\Delta_n \bar{cr}_n F)(X_1,\cdots, X_n) \ar[r]^{\epsilon} \ar[d]^{\bar{cr}_n(\gamma)} & (\bar{cr}_n F)(Z,\cdots,Z)_{h \Sigma_n} \ar[d]^{\gamma} \\
(\bar{cr}_n F)(X_1,\cdots,X_n) \ar[r]^{\epsilon} & F(Z)
}
\]
(Notice that the right vertical composite is just the map that induced $\gamma$)
\[\xymatrix{
(\bar{cr}_n F)(X_1,\cdots,X_n) \ar[r]^{\epsilon} \ar[d]^{\iota} & F(Z) \ar[d]^{F(\coprod{\iota_i})} & \\
(\bar{cr}_n F)(Z,\cdots,Z) \ar[r]^{\epsilon} & F(\coprod{Z}) \ar[r]^{F(f)} & F(Z)}
\]
The result now follows from noticing that the composite $F(Z) \xrightarrow{\coprod{\iota_i}} F(\coprod{\iota_i}) \xrightarrow{F(f)} F(Z)$ is homotopic to the identity.

\end{proof}

\section{Delooping homogeneous functors}\label{delooping}

The goal of this section is to obtain an adequate analogue of Goodwillie's Lemma 2.2 in \cite{Goodwillie}, showing that homogeneous functors can always be delooped.

Unfortunately this will require us to make an additional hypothesis on the target category $\D$:

\begin{hypothesis}\label{hyp}
Assume that in $\D$ one has that $\P_1(S)$ shaped holims commute with {\bf countable} directed $\hocolim$s.
\end{hypothesis}

\begin{remark}

Notice that then $P_nF$ can be computed as $T_n^{\aleph_0}F$, as in \cite{Goodwillie}.
\end{remark}

\begin{lemma} \label{loop}
If $F$ is any reduced functor then, up to natural equivalence, there is a fibration sequence
\[P_n F \to P_{n-1} F \to R_n F\]
in which the functor $R_n F$ is $n$-homogeneous.
\end{lemma}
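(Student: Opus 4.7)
The plan is to produce $R_n F$ by an explicit ``one-dimension-shifted'' analogue of the $P_n$ construction, so that the desired fibration sequence is built in at each finite stage and then preserved under the countable transfinite iteration permitted by Hypothesis \ref{hyp}. First, I would apply the iterative description of the homotopy comparison map (Proposition \ref{compmapcom}) to the strongly hococartesian $(n+1)$-cube $\X_X$: by singling out one coordinate from the indexing set $S$ of size $n+1$, $\X_X$ becomes a map of strongly hococartesian $n$-cubes $\X_X^b \to \X_X^t$, and the $(n+1)$-fold comparison map factors through the two $n$-fold comparison maps of these faces. This yields a natural fibration sequence $\bar{T}_n F \to \bar{T}_{n-1} F \to \bar{W}_n F$ at the finite stage, where $\bar{W}_n F$ is an auxiliary functor built from $F$ applied to the ``top'' face (whose vertices, since $F$ is reduced and the top vertices are iterated pushouts of $C(X) \sim *$, have a controlled reduced content). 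I would then iterate this construction transfinitely, paralleling the definition of $\bar{P}_n$, and set $R_n F = \bar{R}_n F \circ Q$. Under Hypothesis \ref{hyp}, $P_n$ is obtained by countable iteration of $T_n$, and Proposition \ref{comholim} lets the fibration sequence at each finite stage survive the passage to the $\hocolim$, giving the desired $P_n F \to P_{n-1} F \to R_n F$.

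To show $R_n F$ is $n$-homogeneous, I would verify $n$-excisiveness by an argument parallel to the proof that $P_n F$ is $n$-excisive (Lemma \ref{factor} applied to the shifted cube, together with the iteration), and deduce $(n-1)$-reducedness by applying $P_{n-1}$ to the fibration sequence: Corollary \ref{pncmp} together with the universality of $P_{n-1}$ and its idempotence on $(n-1)$-excisive inputs (so that $P_{n-1} P_n F \sim P_{n-1} F$) forces $P_{n-1} R_n F \sim *$, making $R_n F$ both $n$-excisive and $(n-1)$-reduced, hence $n$-homogeneous in the sense of the paper.

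The main obstacle I foresee is the clean construction of the finite-stage fibration sequence --- i.e.\ identifying $\bar{W}_n F$ so that $\hofiber(\bar{T}_{n-1} F \to \bar{W}_n F) \sim \bar{T}_n F$ with enough naturality to iterate transfinitely. The combinatorics of decomposing an $(n+1)$-cube into two stacked $n$-cubes must be matched with the requirement that homotopy fibers commute with the sequential $\hocolim$s used in the iteration, which is precisely what Hypothesis \ref{hyp} is introduced to guarantee; handling this compatibility carefully, and checking that the resulting $\bar{W}_n$ genuinely lands in the right homotopical context at each stage, is the technical heart of the argument.
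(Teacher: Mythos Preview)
Your proposal correctly identifies a fiber sequence at the first stage: decomposing the $(n+1)$-cube $\X_X$ along one coordinate and using reducedness of $F$ does give $\bar{T}_n F \sim \hofiber(\bar{T}_{n-1} F \to \bar{W}_n F)$ for a suitable $\bar{W}_n F$. The genuine gap is the iteration. You propose to ``iterate this construction transfinitely, paralleling the definition of $\bar{P}_n$'', but $\bar{T}_n$ and $\bar{T}_{n-1}$ are built from cubes of \emph{different} sizes and do not compose compatibly: applying $\bar{T}_n$ to the stage-$1$ sequence yields $\bar{T}_n^2 F \to \bar{T}_n \bar{T}_{n-1} F \to \bar{T}_n \bar{W}_n F$, whose middle term is not $\bar{T}_{n-1}^2 F$; applying $\bar{T}_{n-1}$ instead gives the wrong first term. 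There is no single operator whose iteration simultaneously produces $P_n F$, $P_{n-1} F$, and $R_n F$ in the naive way you sketch, and you flag this as ``the main obstacle'' without indicating how to overcome it.

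Goodwillie's actual proof (which the paper invokes without reproducing) resolves this by a different and more global device: rather than iterating a stage-$1$ fiber sequence, he organizes $T_n^i F(X)$, $T_{n-1}^i F(X)$, and the base term as $\holim$s over nested sub-posets of a single product poset $\P_0(n)^i$, with the fiber sequence at each finite $i$ extracted from the combinatorics of this nesting. That is precisely the construction the remark following the lemma alludes to when it notes that the $\P_0(n)^i$ have no evident transfinite analogue --- and it is why Hypothesis \ref{hyp} is imposed, so that countable $i$ suffice. Your argument for $(n-1)$-reducedness of $R_n F$ via applying $P_{n-1}$ to the fiber sequence is fine once the sequence exists, but producing that sequence coherently across all stages is the entire content of the lemma.
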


\begin{proof}
This is just a repeat of the proof of Lemma $2.2$ in \cite{Goodwillie}, which we do not repeat here as it is fairly long. One merely does the obvious changes on replacing $X \** T$ with $\X_X(T)$ and precomposing everywhere with $Q$.
\end{proof}

\begin{remark}
The reason for introducing Hypothesis \ref{hyp} is that there seems to be no clear way of generalizing Goodwillie's proof to the transfinite case, as it is rather clear what should take the place of the $\P_0(n)^i$ appearing in the proof (the obvious guess, $\P_0(n)^{\alpha}$, for $\alpha$ an ordinal, does not seem to be workable).
\end{remark}

\begin{proposition} \label{simploop}

Suppose $\D$ a pointed simplicial model category, and suppose given a diagram of pointwise fibrant functors $\C \to \D$
\[ \xymatrix{A \ar[r] \ar[d]& S \ar[d]\\
K \ar[r] & C}
\]
such that $S,K \sim \**$.

Then there exists a w.e. square of functors

\begin{equation} \label{sq}
\xymatrix{\bar{A} \ar[r] \ar[d]& Map_{\**}((I,\**),C) \ar[d]\\
\** \ar[r] & C}
\end{equation}
yielding the associated map $A' \to \Omega C$ (where $\Omega$ denotes simplicial loops).

\end{proposition}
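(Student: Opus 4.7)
The plan is to realize the abstract map $A' \to \Omega C$---which exists because the homotopy pullback of $S \to C \leftarrow K$ is naturally $\Omega C$ when $S,K \sim \**$---as a concrete commutative square involving the simplicial path object $Map_{\**}((I,\**),C)$, whose homotopy fiber over $\** \in C$ under evaluation at $1$ is precisely $\Omega C$.

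First I would construct, naturally in $\C$, a lift $K \to Map_{\**}((I,\**),C)$ of the map $K \to C$ along the evaluation fibration $ev_1 \colon Map_{\**}((I,\**),C) \to C$. Since $K$ is pointwise fibrant and $K \sim \**$, the pointed simplicial mapping space of natural transformations $K \to C$ (computed in the simplicial enrichment of $\D^{\C}$) is a Kan complex containing both the given map $K \to C$ and the zero natural transformation as $0$-simplices representing the same morphism in $Ho(\D^{\C})$; a choice of $1$-simplex connecting them is exactly such a lift. Similarly I would obtain a lift $S \to Map_{\**}((I,\**),C)$.

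Second, take $\bar{A}$ to be a functorial bifibrant replacement of $A$, and define the top arrow of \eqref{sq} by composing $\bar{A} \to A \to K \to Map_{\**}((I,\**),C)$ using the $K$-side lift. To force the square to commute strictly with the bottom-left corner $\** \to C$, I would refine the target by pulling back against the $S$-side lift: concretely, concatenate the $K$-path (forward) with the $S$-path (reversed) to produce a loop at $\**$, i.e., a natural transformation $\bar{A} \to \Omega C \subset Map_{\**}((I,\**),C)$, and use this as the top arrow of the square. The strict equality of the two composites $A \to S \to C$ and $A \to K \to C$ (given by the commutativity of the original square) is what allows the two paths to be concatenated at a common midpoint, while the fact that $S,K$ are each $\sim \**$ guarantees that the endpoints of the resulting loop land at $\**$.

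The main obstacle is the strictification and functoriality: the two pointwise null-homotopies must be chosen compatibly and naturally in $\C$, and the square in \eqref{sq} must commute on the nose rather than merely up to homotopy. I expect this to require a careful use of the simplicial enrichment of the functor category $\D^{\C}$ combined with the pointwise fibrancy assumption, so that the required $1$-simplices can be chosen at the level of functors (rather than separately at each object of $\C$), and the strictification of the resulting map through the pullback can be performed without losing homotopical control.
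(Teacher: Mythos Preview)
Your plan has a genuine gap at exactly the point you flag as ``the main obstacle,'' and the paper's argument is organized precisely so as to avoid it.

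You want to choose, \emph{naturally in $\C$}, a lift $K \to Map_{\**}((I,\**),C)$ of $K \to C$, i.e.\ a pointed null-homotopy of the natural transformation $K \to C$. Your justification appeals to the simplicial mapping space of natural transformations in $\D^{\C}$ being a Kan complex in which the given map and the zero map lie in the same component. But none of this is available under the stated hypotheses: $\C$ is an arbitrary (possibly large) source category, so the simplicial hom in $\D^{\C}$ need not even be a set; and even when $\C$ is small and $\D^{\C}$ carries the projective model structure, the mapping space $Map_{\D^{\C}}(K,C)$ is only guaranteed to be Kan when $K$ is projectively cofibrant, which pointwise fibrancy of $K$ certainly does not give you. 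Knowing $K(c)\sim\**$ for each $c$ tells you each $K(c)\to C(c)$ is null-homotopic, but does not by itself produce a null-homotopy that is natural in $c$. The same objection applies to the $S$-side lift, and without both lifts your concatenation-of-paths construction never gets off the ground.

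The paper's proof sidesteps the need for any chosen null-homotopy by using only constructions that are already strictly functorial in $\D$ (and hence automatically in $\D^{\C}$): functorial factorization to make $A\to K$ and $S\to C$ pointwise fibrations; then \emph{pullback} along the canonical map $\** \to K$ (a pointwise w.e.\ since $K\sim\**$) to replace $K$ by $\**$; then the mapping path object factorization $S \to Map(I,C)\times_C S \twoheadrightarrow C$ and a further pullback; and finally pullback along $\** \to S$. Each step is a strict pullback of a pointwise fibration along a pointwise w.e., so homotopical control is maintained without ever invoking a mapping space in the functor category or making a non-canonical choice. In short: rather than \emph{choosing} a contraction of $K$ (which is where naturality fails), the paper \emph{pulls back} along the unique map $\**\to K$, which is canonical and therefore automatically natural.
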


\begin{proof}

First notice that one can freely assume the vertical maps in \ref{sq} are levelwise fibrations by performing functorial factorizations.

Next, to replace $K$ by $\**$ one simply does the pullback
\[ \xymatrix{A' \ar[r] \ar@{->>}[d] & A \ar[r] \ar@{->>}[d]& S \ar@{->>}[d] \\
\** \ar[r] & K \ar[r] & C}
\]

Next one uses mapping path objects:

\[ \xymatrix{A' \ar[d] \ar[r] &  S \ar[d] \\
R \ar@{->>}[d] \ar[r] &  Map(I,C)\times_C S \ar@{->>}[d] \\
\** \ar[r] & C}
\]

where $Z$ is defined as the pushout of the lower corner (that the map $A \to Z$ is a w.e. follows from both squares being homotopy pullback squares).

Finally, one uses the obvious w.e. map $\** \to S$ to obtain (notice $Map_{\**}((I, \**),C)= Map(I,C)\times_C \**$)

\[ \xymatrix{\bar{A} \ar[d] \ar[r] &  Map_{\**}((I,\**),C) \ar[d] \\
R \ar@{->>}[d] \ar[r] &  Map(I,C)\times_C S \ar@{->>}[d] \\
\** \ar[r] & C}
\]

\end{proof}

\begin{proposition}\label{factspectratar}

Let $\C' \subset \C$ any small subcategory, and $F\colon \C \to \D$ a homogeneous functor, where $\D$ is further assumed to be simplicial. Then $F|_{C'}$ factors (up to a ziz zag by w.e.) by $Sp(\C,\Sigma)$.

\end{proposition}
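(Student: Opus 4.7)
The plan is to iteratively deloop $F$ and then assemble the deloopings into a spectrum-valued functor on $\C'$. Since $F$ is $n$-homogeneous it is in particular reduced, and one has $P_n F \sim F$ while $P_{n-1} F \sim *$. Applying Lemma \ref{loop} to $F$ itself therefore yields a fibration sequence (of functors up to natural zig-zag of w.e.) of the form $F \rightsquigarrow * \rightsquigarrow R_n F$ in which $R_n F$ is again $n$-homogeneous. Plugging the resulting commutative square (with $A = F$, $S = K = *$, $C = R_n F$) into Proposition \ref{simploop} produces a natural zig-zag of weak equivalences $F \rightsquigarrow \Omega R_n F$, where $\Omega$ denotes the simplicial loop functor, and where fibrant replacement of $R_n F$ has been performed to meet the hypotheses of \ref{simploop}.

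Setting $F^{(0)} := F$ and inductively $F^{(k+1)} := R_n F^{(k)}$, one then obtains a sequence of $n$-homogeneous functors $\C \to \D$ together with natural zig-zags $\omega_k \colon F^{(k)} \rightsquigarrow \Omega F^{(k+1)}$. Since $\C$ and $\D$ are simplicial, the adjoints of the $\omega_k$ are zig-zags of structure maps $\sigma_k \colon \Sigma F^{(k)} \rightsquigarrow F^{(k+1)}$, which are exactly the data needed to define a Bousfield--Friedlander spectrum object of functors, once the zig-zags are rectified into honest natural transformations.

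Rectification is where the smallness of $\C'$ is essential. Restricting everything to $\C'$, the functor category $\D^{\C'}$ inherits a cofibrantly generated (simplicial) projective model structure, and functorial fibrant replacement is available there. Proceeding by induction on $k$, one can choose compatible fibrant replacements of the $F^{(k)}|_{\C'}$ in $\D^{\C'}$ so that each backward weak equivalence appearing in $\omega_k$ can be inverted up to genuine natural transformation; straightening each $\omega_k$ in this way produces honest natural transformations $F^{(k)}|_{\C'} \to \Omega F^{(k+1)}|_{\C'}$, equivalently $\Sigma F^{(k)}|_{\C'} \to F^{(k+1)}|_{\C'}$. These assemble into a functor $\tilde F \colon \C' \to Sp(\D, \Sigma)$ whose level $k$ is (functorially weakly equivalent to) $F^{(k)}|_{\C'}$, and for which $\Omega^\infty \tilde F \sim F|_{\C'}$ via a natural zig-zag of w.e.'s, giving the required factorization.

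The main technical obstacle is coherence across the tower: the zig-zags provided by Proposition \ref{simploop} are built level-by-level from choices of functorial fibrant replacement and mapping path objects, and producing a single coherent spectrum object out of this tower requires the tower of rectifications to be compatible. This is precisely what smallness of $\C'$ buys us, as it ensures that $\D^{\C'}$ is itself cofibrantly generated simplicial and that the inductive choice of functorial fibrant replacements at each level can be controlled; without it, one only obtains a tower in the homotopy category rather than a strict diagram $\C' \to Sp(\D,\Sigma)$.
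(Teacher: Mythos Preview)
Your overall strategy matches the paper's: iterate Lemma \ref{loop} and Proposition \ref{simploop} to obtain deloopings, then exploit the projective model structure on $\D^{\C'}$ (available precisely because $\C'$ is small) to rectify the resulting zig-zags into honest structure maps. Where your argument remains imprecise is exactly the point you flag as the ``main technical obstacle''. The inductive-on-$k$ rectification you sketch is not clearly workable: straightening a zig-zag $\omega_k$ in a model category generally requires modifying \emph{both} endpoints (or at least having the source cofibrant, which conflicts with the pointwise fibrancy needed for $\Omega$ to be homotopically meaningful), so the replacement $\tilde F^{(k)}$ produced when rectifying $\omega_{k-1}$ need not serve as a valid source when rectifying $\omega_k$. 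Invoking ``inductive choices of functorial fibrant replacements'' does not by itself resolve this; it is a description of the problem rather than a solution.

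The paper supplies a concrete construction. Proposition \ref{simploop} already outputs \emph{honest} maps $A_i \xrightarrow{\sim} \Omega A'_{i+1}$, but between two a priori different models $A_i$, $A'_i$ of the $i$-th delooping. Using the projective model structure on $\D^{\C'}$ one chooses common covers $A''_i$ with trivial fibrations $A_i \twoheadleftarrow A''_i \twoheadrightarrow A'_i$, and then builds a tower of pullbacks
\[
\xymatrix{A^{(n+1)}_{i} \ar@{->>}[d]^{\sim} \ar[r]^{\sim} & \Omega A^{(n)}_{i+1} \ar@{->>}[d]^{\sim} \\
A^{(n)}_{i} \ar[r]^{\sim} & \Omega A^{(n-1)}_{i+1} }
\]
starting from $A^{(2)}_i = A''_i$. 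Setting $A^{(\infty)}_i = \lim_n A^{(n)}_i$, the limit of a tower of trivial fibrations is again a trivial fibration onto $A''_i$, and by construction the $A^{(\infty)}_i$ come with strictly compatible maps $A^{(\infty)}_i \xrightarrow{\sim} \Omega A^{(\infty)}_{i+1}$, hence assemble into a spectrum object. The essential point is that all levels $i$ are corrected \emph{simultaneously} at each stage $n$ of the tower, rather than one level at a time; this is what forces the coherence you were looking for.
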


\begin{proof}
Iterative application of Lemma \ref{loop} and Proposition \ref{simploop} yields two sequences of pointwise fibrant functors $A_i$ and $A'_i$ together with w.e.'s $A_i \xrightarrow{\sim} \Omega A'_{i+1}$.

Since the projective model structure on $\D^{\C'}$ exists (Theorem 13.3.2 in \cite{Emily}), we get to that we can form $A''_{i}$ together with maps $\xymatrix{A_{i} & A''_{i} \ar@{->>}[l]_{\sim} \ar@{->>}[r]^{\sim} & A'_{i}}$.

Hence one may just as well replace $A_i$ by $A''_i$. Now inductively define $A^{(n)}_{i}$ by setting $A^{(2)}_{i}=A''_i$ and forming the pullbacks
\[\xymatrix{A^{(n+1)}_{i} \ar@{->>}[d]^{\sim} \ar[r]^{\sim} & \Omega A^{(n)}_{i+1} \ar@{->>}[d]^{\sim} \\
A^{(n)}_{i} \ar[r]^{\sim} & \Omega A^{(n-1)}_{i+1} 
}
\]
Finally one defines $A^{(\infty)}_{i}=\lim_{n} A^{(n)}_i$, which come equipped with the desired spectrum maps $A^{(\infty)}_{i} \xrightarrow{\sim} \Omega A^{(\infty)}_{i+1}$.

\end{proof}

\section{Factoring multilinear symmetric functors through spectra}\label{factspectra}

In this section we use the notion of spectra described in \ref{assumpspectra}, along with the assumptions described there. Namely, we always assume that the spectra categories $Sp(\C)$ are cofibrantly generated, that they have a localization functor as in Proposition \ref{localization}, and that stable w.e.s are detected by the $\tilde{\Omega}^{\infty-n}$ functors.

\begin{definition}
Consider a (multi-)functor $F \colon \C^n \to \D$. There is an obvious left action\footnote{Which by abuse of notation we just denote by the corresponding elements $\sigma \in \Sigma_n$} of $\Sigma_n$ on $\C^n$, and we call the functor $F$ {\bf symmetric} if there are natural transformations
\[F \xrightarrow{\mu_\sigma} F\circ \sigma\]
satisfying $\mu_{\sigma \sigma'}=\mu_{\sigma}\sigma' \circ \mu_\sigma'$.

Furthermore, given symmetric functors $F,G$, we define a {\bf natural transformation} between them to be a natural transformation $\eta \colon F \to G$ compatible with $\mu_{\sigma,F}, \mu_{\sigma,G}$ in the obvious way.
\end{definition}

Our goal in this section is to prove a theorem of the following kind.

\begin{theorem}\label{multfact} 
Suppose given a symmetric functor $F \colon \C^{\times n} \to \D$ between nice enough model categories which is linear in each coordinate. Then $F$ is of the form\footnote{That is to say, there is a zig zag of w.e.s between the two.} $\tilde{\Omega}^\infty \circ \bar{F} \circ (\tilde{\Sigma}^\infty)^{\times n}$, where $\bar{F} \colon Sp(\C)^{\times n} \to Sp(\D)$ is itself a symmetric multilinear functor. 
\end{theorem}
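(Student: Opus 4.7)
The plan is to build $\bar{F}$ on spectra by applying $F$ levelwise and using the assembly equivalences supplied by multilinearity to produce structure maps. Linearity of $F$ in each variable, together with the assumption that $\C$ is pointed simplicial, gives for cofibrant inputs a natural weak equivalence $F(X_1, \ldots, X_n) \xrightarrow{\sim} \Omega F(X_1, \ldots, \Sigma X_i, \ldots, X_n)$ for each $i$. Its adjoint is an assembly map $\Sigma F(X_1, \ldots, X_n) \to F(X_1, \ldots, \Sigma X_i, \ldots, X_n)$, and these are the basic building blocks of $\bar{F}$.

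First I would define $\bar{F}$ on suspension spectra. For cofibrant $X_i$, let $\bar{F}(\Sigma^\infty X_1, \ldots, \Sigma^\infty X_n)$ have $k$-th level $F(\Sigma^k X_1, \ldots, \Sigma^k X_n)$, with structure maps built by composing the assembly maps in all $n$ variables. Multilinearity forces this to be an $\Omega$-spectrum. The most natural indexing here produces an object of $Sp(\D, (S^1)^{\wedge n})$; one then converts this to an object of $Sp(\D)$ via the Quillen equivalence between these model categories noted in Section \ref{assumpspectra}.

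To extend to arbitrary spectra I would mimic the same construction: given cofibrant $Y_i^\bullet \in Sp(\C)$, set the $k$-th space of $\bar{F}(Y_1^\bullet, \ldots, Y_n^\bullet)$ to be $F(Y_1^k, \ldots, Y_n^k)$, with structure maps combining the assembly maps with those of the $Y_i^\bullet$. To check this preserves stable equivalences I would appeal to the detection criterion of Theorem \ref{hoveyresult1}: it suffices to see that the derived functors $\tilde{\Omega}^{\infty - k} \bar{F}$ preserve equivalences, and these are computed as $\hocolim_k \Omega^{nk} F(\Sigma^k Y_1^k, \ldots, \Sigma^k Y_n^k)$. The required equivalence then follows from the multilinearity of $F$ together with Proposition \ref{comholim}, which allows the sequential colimits and the finite limits defining $\Omega^{nk}$ to be interchanged; the factorization description from Proposition \ref{localization} lets one reduce general cofibrant spectra to cell complexes built out of shifted suspension spectra, where the previous step already applies.

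Symmetry of $\bar{F}$ is immediate from the construction, and multilinearity of $\bar{F}$ follows because cofiber sequences in $Sp(\D)$ coincide with fiber sequences (by stability) and $F$ is already linear in each variable. The identification $\tilde{\Omega}^\infty \circ \bar{F} \circ (\tilde{\Sigma}^\infty)^{\times n} \sim F$ reduces to the direct computation $\hocolim_k \Omega^{nk} F(\Sigma^k X_1, \ldots, \Sigma^k X_n) \sim F(X_1, \ldots, X_n)$, a filtered colimit of equivalences via the assembly maps. I expect the main obstacle to be the extension step: verifying that the levelwise definition on arbitrary spectra is genuinely homotopical and multilinear on all of $Sp(\C)^{\times n}$. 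That is where the detection hypothesis for stable equivalences and the explicit description of the localization functor do the bulk of the technical work, and likely require careful control over a cofibrant replacement functor on $Sp(\C)$ compatible with the small-object-argument factorizations appearing in Proposition \ref{localization}.
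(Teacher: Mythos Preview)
Your outline matches the paper's approach closely: define $\bar{F}$ levelwise, land in $Sp(\D,(S^1)^{\wedge n})$, then transport to $Sp(\D,S^1)$ via a Quillen equivalence; verify homotopicality with respect to stable equivalences using the explicit localization functor of Proposition~\ref{localization} together with the detection criterion of Theorem~\ref{hoveyresult1}; and identify $\tilde{\Omega}^\infty\circ\bar{F}\circ(\tilde{\Sigma}^\infty)^{\times n}$ with $F$ by the filtered-colimit-of-equivalences computation. On those points your sketch is essentially the paper's argument.

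The genuine gap is the sentence ``Symmetry of $\bar{F}$ is immediate from the construction.'' It is not. When you assemble the $n$ structure maps into a single structure map $(S^1)^{\wedge n}\wedge F(X^1_i,\ldots,X^n_i)\to F(X^1_{i+1},\ldots,X^n_{i+1})$, the sphere coordinates are used in a \emph{fixed order}, so permuting the inputs by $\sigma\in\Sigma_n$ does not yield the same spectrum on the nose: it yields the spectrum whose structure maps are twisted by the $\Sigma_n$-action on $(S^1)^{\wedge n}$. Consequently $\bar{F}'$ is symmetric only in the generalized sense where the target $Sp(\D,(S^1)^{\wedge n})$ carries a nontrivial $\Sigma_n$-action, and the natural isomorphisms $\mu_\sigma$ go from $\sigma\circ\bar{F}'$ to $\bar{F}'\circ\sigma$. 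The Quillen equivalence $Sp(\D,(S^1)^{\wedge n})\simeq Sp(\D,S^1)$ you invoke is not a priori $\Sigma_n$-equivariant (the target has trivial action), so transporting symmetry across it is exactly the nontrivial step. The paper handles this by replacing $(S^1)^{\wedge n}$, via a zig-zag of $\Sigma_n$-equivalences, by $S^1\wedge S^{\bar{\rho}}$ with $S^{\bar{\rho}}$ the reduced regular representation sphere, and then exhibiting an explicit $\Sigma_n$-Quillen equivalence $Sp(\D,S^1)\rightleftarrows Sp(\D,S^1\wedge S^{\bar{\rho}})$ whose left adjoint is $(X_i)\mapsto((S^{\bar{\rho}})^{\wedge i}\wedge X_i)$. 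Proving this last adjunction is a Quillen equivalence (and is compatible with the $\Omega^\infty$ functors) involves a careful cofinality argument with shuffled suspension/loop coordinates; this is where the real work in the symmetry claim lies, and your proposal does not account for it.
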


We will use the following generalization of the previous definition.

\begin{definition}
Let $\C, \D$ be categories acted on the left by $\Sigma_n$ (or more generally any group $G$). Then a functor $F\colon \C \to \D$ is called {\bf symmetric} if there are natural transformations
\[\sigma \circ F \xrightarrow{\mu_\sigma} F\circ \sigma\]
satisfying $\mu_{\sigma \sigma'}=\mu_{\sigma}\sigma' \circ \sigma \mu_\sigma'$.

Furthermore, given symmetric functors $F,G$, we define a {\bf natural transformation} between them to be a natural transformation $\eta \colon F \to G$ compatible with $\mu_{\sigma,F}, \mu_{\sigma,G}$ in the obvious way.

\end{definition}

\begin{remark}
Notice that given $\Sigma_n$-categories $\C,\D,\E$  and symmetric functors $\C \xrightarrow{F} \D \xrightarrow{G} \E$ the composite $G\circ F$ is also a symmetric functor with $\mu_{\sigma,G\circ F}$ the composite $\sigma \circ G \circ F \xrightarrow{\mu_{\sigma,G}F}  G  \circ \sigma \circ F \xrightarrow{G \mu_{\sigma,F}} G \circ F \circ \sigma $
\end{remark}

\begin{proposition}
Suppose $\C$ is a cofibrantly generated model category acted on the left by a group $G$. 

Then the functorial factorizations can be chosen compatible with the $G$ action.osen compatible with the $G$ action.
\end{proposition}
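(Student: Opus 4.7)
The plan is to run Quillen's small object argument (Proposition \ref{functrep}) with a $G$-invariant enlargement of the generating sets, so that $G$-equivariance of the input cells forces $G$-equivariance of the resulting functorial factorization.

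First I would replace the given generating sets $I$ and $J$ by their $G$-orbits $\tilde{I} = \bigcup_{g \in G} g \cdot I$ and $\tilde{J} = \bigcup_{g \in G} g \cdot J$. Since $G$ is assumed to act by automorphisms of the model structure on $\C$, each $g \cdot i$ is again a generating (trivial) cofibration, and $\tilde{I}, \tilde{J}$ detect the same classes of (trivial) fibrations as $I, J$. Moreover, attaching a $(g\cdot i)$-cell differs from attaching an $i$-cell only by precomposition of the attaching map with $g^{-1}$, so the class of relative $\tilde{I}$-cell complexes coincides with the class of relative $I$-cell complexes; consequently, smallness of domains with respect to $\tilde{I}$ is preserved at the same regular cardinal $\kappa$ as for $I$, and similarly for $\tilde{J}$.

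Next I would apply the small object argument with respect to $\tilde{I}$ (respectively $\tilde{J}$) to construct a functorial factorization sending each $f \colon X \to Y$ to $X \rightarrowtail Z(f) \twoheadrightarrow Y$. At each successor stage one forms $Z_{\alpha+1}(f)$ as the pushout
\[
\coprod_{(i,\square)} \mathrm{cod}(i) \;\longleftarrow\; \coprod_{(i,\square)} \mathrm{dom}(i) \;\longrightarrow\; Z_{\alpha}(f),
\]
indexed by all commutative squares $\square$ from some $i \in \tilde{I}$ to the current arrow $Z_{\alpha}(f) \to Y$. Because $\tilde{I}$ is $G$-stable as a set, the induced $G$-action on $\C^{\downarrow}$ sends a square from $i$ to $Z_{\alpha}(f) \to Y$ bijectively to one from $g \cdot i \in \tilde{I}$ to $Z_{\alpha}(g \cdot f) \to g \cdot Y$. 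This canonical bijection of indexing sets transports through the pushout, and limit ordinals (and the final transfinite colimit) preserve equivariance, so by transfinite induction $Z(f)$ depends $G$-equivariantly on $f$. The same recipe applied to $\tilde{J}$ yields the other factorization.

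The only point requiring genuine verification, and thus the potential obstacle, is the passage from $I$ to $\tilde{I}$: one must check that $\tilde{I}$ is still a set (immediate, since it is indexed by $G \times I$) and that smallness of its domains remains controlled by a regular cardinal. As noted above the latter is automatic because the classes of $I$- and $\tilde{I}$-cell complexes literally coincide; everything after that is essentially formal equivariance bookkeeping along the transfinite construction.
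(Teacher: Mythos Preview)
Your proposal is correct and follows essentially the same approach as the paper: enlarge the generating sets to their $G$-orbits (the paper writes $I^G=\{(g,g(f)):g\in G,\,f\in I\}$, which is your $\tilde I$ indexed by $G\times I$) and then run the small object argument, whose canonical nature forces $G$-equivariance. The paper's proof is a two-line sketch of exactly this idea, so your version simply supplies the transfinite-induction bookkeeping that the paper leaves implicit.
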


\begin{proof}
Letting $I$, $J$ be the generating cofibrations and trivial cofibrations for $\C$, one extends these. Namely, $I^G=\{(g,g(f))\colon g \in G, f\in I\}$, and analogously for $J^G$. Performing the Quillen small object arguments for these sets will then yield the desired invariant factorizations.

\end{proof}

\begin{remark}
We hence assume that the fibrant/cofibrant replacements performed in such a category are $G$ invariant.
\end{remark}

We now construct a first avatar of $\bar{F}$, which we call $\bar{F}'$.

Notice that $Sp(\D,\Sigma^n)$ has an obvious action by the group $\Sigma_n$ induced by its action on $(S^1)^{\wedge n}$. We now define:
\begin{align}
\bar{F}'\colon &Sp(\C,\Sigma)^n\to  Sp(\D,\Sigma^n) \\
& ((X^1_i)_{i\in \mathbb{N}}, \dotsc,(X^n_i)_{i\in \mathbb{N}})\mapsto (F(X^1_i,\dotsc,X^n_i))_{i \in \mathbb{N}}
\end{align}
with structure maps for $\bar{F}'((X^1_i)_{i\in \mathbb{N}}, \dotsc,(X^n_i)_{i\in \mathbb{N}})$ being given by the composite 
\[(S^1)^{\wedge n}\wedge F(X^1_i,\dotsc,X^n_i) \to F(S^1 \wedge X^1_i,\dotsc,S^1 \wedge X^n_i) \to F(X^1_{i+1},\dotsc,X^n_{i+1})\]
where the first map comes from the multilinear structure of $F$, and uses the sphere coordinates in the obvious {\bf fixed} order.

\begin{proposition}
If $F$ is a symmetric functor $\C^n\to \D$, then so is $\bar{F}'\colon Sp(\C,\Sigma)^n\to  Sp(\D,\Sigma^n)$ (now taking into account that both the source and target are categories with a $\Sigma_n$ action).
\end{proposition}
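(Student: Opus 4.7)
The plan is to construct the required natural transformations $\mu_\sigma \colon \sigma \circ \bar{F}' \to \bar{F}' \circ \sigma$ by applying the natural transformations $\mu_\sigma^F \colon F \to F \circ \sigma$ of $F$ levelwise, and then to verify compatibility with the spectrum structure maps and the cocycle identity.

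First I would unpack the two $\Sigma_n$-actions. On $Sp(\C,\Sigma)^n$ the action is by permutation of the $n$ factors, so $\sigma(\mathbf{X}) = (X^{\sigma^{-1}(1)}, \ldots, X^{\sigma^{-1}(n)})$ on underlying sequences, with structure maps carried along unchanged. On $Sp(\D,\Sigma^n)$ the action is induced by the $\Sigma_n$-action on $(S^1)^{\wedge n}$ permuting factors: $\sigma Y$ has the same underlying sequence as $Y$ but structure maps precomposed with $\sigma \wedge \mathrm{id}_{Y_i}$. With these descriptions, the underlying sequences of $\sigma\bar{F}'(\mathbf{X})$ and $\bar{F}'\sigma(\mathbf{X})$ at level $i$ are $F(X^1_i,\ldots,X^n_i)$ and $F(X^{\sigma^{-1}(1)}_i,\ldots,X^{\sigma^{-1}(n)}_i)$ respectively.

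Next I would define $(\mu_\sigma)_i$ to be the component of $\mu_\sigma^F$ at $(X^1_i,\ldots,X^n_i)$, which is exactly a map between these two objects. It then remains to show compatibility with structure maps, which amounts to checking commutativity of a square whose two composites are
\[ (S^1)^{\wedge n} \wedge F(X^1_i,\ldots,X^n_i) \xrightarrow{\sigma \wedge 1} (S^1)^{\wedge n} \wedge F(X^1_i,\ldots,X^n_i) \to F(S^1\wedge X^1_i,\ldots) \to F(X^1_{i+1},\ldots) \xrightarrow{\mu_\sigma^F} F(X^{\sigma^{-1}(1)}_{i+1},\ldots) \]
and
\[ (S^1)^{\wedge n}\wedge F(X^1_i,\ldots) \xrightarrow{1\wedge \mu_\sigma^F}  (S^1)^{\wedge n}\wedge F(X^{\sigma^{-1}(1)}_i,\ldots) \to F(S^1\wedge X^{\sigma^{-1}(1)}_i,\ldots)\to F(X^{\sigma^{-1}(1)}_{i+1},\ldots). \]
The second half of each composite agrees by naturality of $\mu_\sigma^F$, so the check reduces to the statement that the multilinear assembly map of $F$ is equivariant in the sense that permuting the $n$ sphere factors by $\sigma$ and then assembling is the same as assembling first and then applying $\mu_\sigma^F$; this is precisely what it means for the assembly to ``use the sphere coordinates in the obvious fixed order'' paired with the input slots.

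Finally I would verify the cocycle condition $\mu_{\sigma \sigma'} = \mu_\sigma \sigma' \circ \sigma \mu_{\sigma'}$: since $\mu_\sigma$ is defined levelwise by $\mu_\sigma^F$, this is immediate from the corresponding cocycle identity for $F$. The main potential obstacle is purely notational: fixing a convention for the $\Sigma_n$-action on $(S^1)^{\wedge n}$ (left versus right, and whether one uses $\sigma$ or $\sigma^{-1}$) so that it agrees with the permutation convention on input slots of $F$. Once a consistent choice is fixed, the verification is pure naturality plus the definition of $\bar{F}'$.
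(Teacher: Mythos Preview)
Your proposal is correct and follows essentially the same approach as the paper: define $\mu_\sigma$ levelwise via $\mu_\sigma^F$, and observe that compatibility with the spectrum structure maps is exactly what the $\Sigma_n$-twisting of the structure maps on $Sp(\D,\Sigma^n)$ provides. The paper's proof is much terser (it simply asserts that the twisting guarantees compatibility and omits the cocycle check), whereas you spell out the relevant commutative square and verify the cocycle identity explicitly; but the underlying argument is identical.
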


\begin{proof}
We require maps $\sigma \circ \bar{F}' \xrightarrow{\mu_\sigma} \bar{F}' \circ \sigma$. Since the $\Sigma_n$ action on $Sp(\D,\Sigma^n)$ does not change the underlying sequence of the spectrum, but merely twists the structure maps, such maps are immediately obtained levelwise by applying the $\nu_{\sigma,F}$. That this is compatible with the structure maps of the spectra is precisely guaranteed by the referred twisting.
\end{proof}

\begin{proposition}
$\tilde{\Omega}^{\infty}\colon Sp(\D,\Sigma^n) \to \D$ is a symmetric functor.
\end{proposition}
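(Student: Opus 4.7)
\emph{Proof plan.} Since $\Sigma_n$ acts trivially on the target $\D$, endowing $\tilde{\Omega}^\infty$ with the structure of a symmetric functor reduces to producing, for each $\sigma \in \Sigma_n$, a natural transformation $\mu_\sigma \colon \tilde{\Omega}^\infty \Rightarrow \tilde{\Omega}^\infty \circ \sigma$ satisfying the cocycle identity $\mu_{\sigma \sigma'} = (\mu_\sigma\sigma') \circ \mu_{\sigma'}$.

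The key observation is that the $\Sigma_n$-action on $Sp(\D,(S^1)^{\wedge n})$ induced by the permutation action on $(S^1)^{\wedge n}$ does not alter the underlying sequence $(X_i)$ of a spectrum $X$: it merely twists the structure maps $(S^1)^{\wedge n}\wedge X_i \to X_{i+1}$ by precomposition with $\sigma \wedge \mathrm{id}_{X_i}$. In particular, at the strict (non-derived) level one has literal identities $\Omega^\infty(\sigma X) = (\sigma X)_0 = X_0 = \Omega^\infty(X)$, and these identities satisfy the cocycle condition tautologically.

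To promote this to the derived functor $\tilde{\Omega}^\infty = \Omega^\infty \circ R$, I would invoke the immediately preceding proposition to pick the localization/fibrant replacement $R$ on $Sp(\D,(S^1)^{\wedge n})$ to be $\Sigma_n$-equivariant. Equivariance then yields a natural isomorphism $R(\sigma X) \cong \sigma R(X)$ of spectra, and evaluating at level $0$ defines $\mu_\sigma$ as the natural isomorphism
\[
\tilde{\Omega}^\infty(X) = (RX)_0 = (\sigma RX)_0 \cong (R(\sigma X))_0 = \tilde{\Omega}^\infty(\sigma X).
\]

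The main (and essentially only) input beyond unpacking definitions is the $\Sigma_n$-equivariance of the fibrant replacement, which is precisely what the preceding proposition supplies. The cocycle identity for the $\mu_\sigma$ follows at once since each is built by composing the strict identity $(\sigma Y)_0 = Y_0$ on $0$-th spaces (trivially cocyclic) with the equivariance data of $R$ (cocyclic by functoriality of the equivariant small-object construction). I do not foresee any genuine obstacle; the content of the proof is entirely in the observation that permuting sphere-coordinates in the structure maps is invisible to evaluation at level zero.
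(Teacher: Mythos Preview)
Your approach is correct and shares the same overall skeleton as the paper's proof: both reduce $\tilde{\Omega}^\infty$ to the underived $\Omega^\infty$ by invoking the $\Sigma_n$-equivariant fibrant replacement furnished by the preceding proposition. The difference lies only in how the underived $\Omega^\infty$ is handled. You take $\Omega^\infty$ to be evaluation at level $0$ (the honest right adjoint to $\Sigma^\infty$) and observe that, since the $\Sigma_n$-action on $Sp(\D,(S^1)^{\wedge n})$ merely twists the structure maps, one has $(\sigma X)_0 = X_0$ on the nose, so the $\mu_\sigma$ are literal identities and the cocycle condition is tautological. The paper instead works with the telescope model $\varinjlim_i Map\bigl(((S^1)^{\wedge n})^{\wedge i}, X_i\bigr)$; here the terms of the colimit for $\Omega^\infty X$ and $\Omega^\infty(\sigma X)$ coincide but the bonding maps differ by the permutation action on the sphere coordinates, and $\mu_\sigma$ is the induced map between the two direct systems. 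Your version is the more economical of the two, while the paper's telescope description has the incidental benefit of aligning directly with the colimit expression used in the subsequent comparison of $F$ with $\Omega^\infty \circ \bar{F}' \circ (\Sigma^\infty)^{\times n}$.
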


\begin{proof}
We need only show that $\Omega^{\infty}$ itself (i.e. the non derived version) is symmetric, since fibrant replacements are chosen symmetric.

We now need to produce maps $\Omega^\infty \xrightarrow{\mu_\sigma} \Omega^\infty \circ \sigma$. But both sides, on a spectrum $(X_i)_{i \in \mathbb{N}}$, are given as colimits $\varinjlim Map((S^1)^{\wedge i},X_i)$, the only difference being that the structure maps are twisted by the $\Sigma_n$ action. That action then gives a map between the direct systems, and hence the required natural transformation.
\end{proof}

\begin{remark}
The above statement would be true replacing $(S^1)^{\wedge n}$ by any other $\Sigma_n$ simplicial pointed set.
\end{remark}

We now prove it is $F$ equivalent to $\Omega^\infty \circ \bar{F}' \circ (\Sigma^\infty)^{\times n}$.

\begin{proposition}
Let $F$ be a multi-simplicial and multilinear homotopy functor.

There is a natural zig zag of equivalences of symmetric functors
\[F \rightsquigarrow \tilde{\Omega}^\infty \circ \bar{F}' \circ (\tilde{\Sigma}^\infty)^{\times n}\]

\end{proposition}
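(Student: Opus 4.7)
The plan is to build the zig zag from the canonical inclusion of the bottom level of the spectrum and reduce its being a weak equivalence to the 1-variable excisiveness identity $G(Y)\simeq \Omega G(\Sigma Y)$ applied slot by slot. Assuming the $X_j$ are cofibrant (replacing otherwise), $\tilde{\Sigma}^\infty X_j$ is the cofibrant spectrum $((S^1)^{\wedge i}\wedge X_j)_{i\in\mathbb{N}}$ with identity structure maps, and $\bar{F}'(\tilde{\Sigma}^\infty X_1,\dots,\tilde{\Sigma}^\infty X_n)$ is the object of $Sp(\D,\Sigma^n)$ whose $i$-th level is $F((S^1)^{\wedge i}\wedge X_1,\dots,(S^1)^{\wedge i}\wedge X_n)$, with structure maps assembled from the multi-simplicial structure of $F$ in the same fixed slot order as in the definition of $\bar{F}'$. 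By Proposition \ref{localization} combined with the standing assumption (Theorem \ref{hoveyresult1}) that stable equivalences in $Sp(\D,\Sigma^n)$ are detected by the $\tilde{\Omega}^{\infty-k}$, the derived $\tilde{\Omega}^\infty$ of this spectrum is naturally equivalent to the directed hocolim
\[
\hocolim_i \Omega^{ni} F((S^1)^{\wedge i}\wedge X_1,\dots,(S^1)^{\wedge i}\wedge X_n),
\]
with $\Omega$ understood derivedly throughout. The zig zag of the statement is then obtained by including $F(X_1,\dots,X_n)$ as the $i=0$ term of this hocolim and composing with a fibrant replacement zig zag, prefaced by the cofibrant replacement zig zag on the $X_j$.

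The heart of the argument is showing this inclusion is a weak equivalence. By cofinality this reduces to checking that each consecutive map
\[
\Omega^{ni} F((S^1)^{\wedge i}\wedge X_\bullet)\longrightarrow \Omega^{n(i+1)} F((S^1)^{\wedge i+1}\wedge X_\bullet)
\]
is a weak equivalence, and the $(\Sigma,\Omega)$-adjunction further reduces this to showing that, for any cofibrant $Y_j$, the natural map $F(Y_1,\dots,Y_n)\to \Omega^n F(\Sigma Y_1,\dots,\Sigma Y_n)$ is a weak equivalence. Telescoping through the slots,
\[
F(Y_1,\dots,Y_n)\to \Omega F(\Sigma Y_1,Y_2,\dots,Y_n)\to \cdots \to \Omega^n F(\Sigma Y_1,\dots,\Sigma Y_n),
\]
the $k$-th constituent map is $\Omega^{k-1}$ applied to the 1-variable map $G(Y_k)\to \Omega G(\Sigma Y_k)$, where $G$ is $F$ with the other variables fixed. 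Multilinearity ensures $G$ is a reduced 1-excisive homotopy functor, so applying $G$ to the strongly hococartesian square $\X_{Y_k}$ yields a homotopy cartesian square two of whose vertices are $\sim\**$ (since $G$ is reduced and $C(Y_k)\sim\**$), which is precisely the required equivalence.

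Naturality in $(X_1,\dots,X_n)$ is automatic. Compatibility with the $\Sigma_n$-actions follows because the $\mu_\sigma$ of $F$ induce the symmetric structure on $\bar{F}'$ exactly matching the $\Sigma_n$-twist of structure maps in $Sp(\D,\Sigma^n)$, while the symmetric structure on $\tilde{\Omega}^\infty$ established earlier is compatible with the colimit description. The main technical obstacle is the initial identification of $\tilde{\Omega}^\infty\bar{F}'(\tilde{\Sigma}^\infty X_\bullet)$ with the explicit directed hocolim above: this rests on the explicit localization functor of Proposition \ref{localization}, the detection hypothesis of Theorem \ref{hoveyresult1}, and a $\hocolim/\holim$-interchange in the spirit of Proposition \ref{comholim} (or Hypothesis \ref{hyp}) used to commute the stably fibrant replacement past the directed colimit of the derived $\Omega^{ni}$'s; everything else is a direct consequence of slotwise excisiveness.
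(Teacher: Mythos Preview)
Your argument is correct and follows essentially the same strategy as the paper: identify $\tilde{\Omega}^\infty \circ \bar{F}' \circ (\tilde{\Sigma}^\infty)^{\times n}$ with the directed (ho)colimit of the functors $\Omega^{ni}F(\Sigma^i X_1,\dots,\Sigma^i X_n)$, include $F$ as the $i=0$ term, and invoke multilinearity to see that the inclusion is a weak equivalence. Your telescoping through the slots and appeal to the hocartesian square $\X_{Y_k}$ is exactly what underlies the paper's one-line ``immediate consequence of the multilinearity of $F$''.

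The one place where you work harder than necessary is the identification step you flag as the ``main technical obstacle''. The paper avoids Proposition~\ref{localization}, Theorem~\ref{hoveyresult1}, and any $\hocolim/\holim$ interchange here: it simply replaces $\bar{F}'$ by a \emph{levelwise} fibrant functor, so that the underived $\Omega^\infty$ is on the nose the colimit $\varinjlim_i Map(S^{ni},F(S^i\wedge X^1,\dots,S^i\wedge X^n))$. The point (made explicit in the Remark following the proof) is that your own multilinearity computation shows this spectrum is already an $\Omega$-spectrum up to level equivalence, so a levelwise fibrant replacement already computes the derived $\tilde{\Omega}^\infty$; no stable fibrant replacement, and hence none of the extra machinery, is needed. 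Once you observe this, the obstacle you identify evaporates and your proof collapses to the paper's.
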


\begin{proof}
Standard tricks allow us to just reduce to comparing $F$ and $\Omega^\infty \circ \bar{F}' \circ (\Sigma^\infty)^{\times n}$ (namely, one can replace $\bar{F}'$ so as to take fibrant values in the projective level model structure and replace $F$ to match its $0$-th level functor) over cofibrant values.

$\Omega^\infty \circ \bar{F}' \circ (\Sigma^\infty)^{\times n}$ is given by a colimit of functors $Map(S^{ni},F(S^i \wedge X^1,\dotsc,S^i \wedge X^n))$, of which $F$ is the $0$-th functor. This yields the natural transformation, which is easily seen to be symmetric. That it is also a w.e. is just an immediate consequence of the multilinearity of $F$. 
\end{proof}

\begin{remark}
A certain amount of care must be taken when reading the previous result. This is because it is not immediately clear that the functor $F'$ is actually homotopical with respect to \emph{stable} equivalences on the spectra categories. Indeed, a priori, the only type of equivalences that are obviously preserved by $\bar{F}'$ are levelwise equivalences of spectra.mount of care must be taken when reading the previous result. This is because it is not immediately clear that the functor $F'$ is actually homotopical with respect to \emph{stable} equivalences on the spectra categories. Indeed, a priori, the only type of equivalences that are obviously preserved by $\bar{F}'$ are levelwise equivalences of spectra.

The next result provides conditions in which this holds.

Before stating we point out, however, that in the previous result $\tilde{\Omega}^{\infty}$ can be defined using either a stable fibrant replacement or a mere levelwise fibrant replacement (for some fixed levelwise model structure). The reason is that, as the proof of the result shows, the image of $\bar{F}' \circ \tilde{\Sigma}^{\infty}$ is already an $\Omega$-spectrum up to level equivalence (or, in other words, the spaces of the spectrum already have the right homotopy type).

In other words, no problems are caused by viewing $Sp(\D)$ as equipped with the stable model structure.
\end{remark}

\begin{proposition}
Let $F$ be a multilinear multihomotopical functor $\C^{n}\to \D$ as before, and suppose further that $F$ preserves filtered homotopy colimits (in each variable).
Then $\bar{F}'$ sends tuples $(f_1,\cdots,f_n)$, where the $f_i$ are either stable w.e.s between cofibrant objects or identities, to stable w.e.s., and hence $\bar{F}' \circ Q^n$  is a homotopical functor with respect to the stable model structures.

Furthermore, $\bar{F}' \circ Q^n$ is a multilinear functor.
\end{proposition}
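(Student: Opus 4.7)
The plan is to exploit the standing hypothesis (Theorem \ref{hoveyresult1}) that stable equivalences in $Sp(\D,\Sigma^n)$ are detected by the derived functors $\tilde{\Omega}^{\infty-i}$. Specifically, I would first establish, for cofibrant spectra $X^1,\ldots,X^n \in Sp(\C,\Sigma)$, a natural zig-zag of equivalences
\[
\tilde{\Omega}^{\infty-i}\bar{F}'(X^1,\ldots,X^n) \;\simeq\; F\bigl(\tilde{\Omega}^{\infty-i}X^1,\ldots,\tilde{\Omega}^{\infty-i}X^n\bigr),
\]
and then read off both the homotopical statement and the multilinearity from this formula and the corresponding properties of $F$.

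To derive the formula I would unwind definitions. The $(k+i)$-th space of $\bar{F}'(X^1,\ldots,X^n)$ is $F(X^1_{k+i},\ldots,X^n_{k+i})$, with structure maps built from the multilinearity of $F$ applied to a fixed splitting of $(S^1)^{\wedge n}$. Consequently
\[
\tilde{\Omega}^{\infty-i}\bar{F}'(X^1,\ldots,X^n) \;\simeq\; \hocolim_k \Omega^{nk} F(X^1_{k+i},\ldots,X^n_{k+i}).
\]
Since $F$ is reduced and $1$-excisive in each variable, it commutes with $\Omega$ slot-by-slot up to natural equivalence, so an iteration distributes the $nk$ loops across the $n$ slots to give $F(\Omega^k X^1_{k+i},\ldots,\Omega^k X^n_{k+i})$. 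Then, since $F$ preserves filtered homotopy colimits in each variable, a Fubini/cofinality argument (sliding the diagonal colimit over $k$ into each slot one at a time) yields $F(\tilde{\Omega}^{\infty-i}X^1,\ldots,\tilde{\Omega}^{\infty-i}X^n)$.

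Granting the formula, the rest is short. For the first claim, if $f^1$ is a stable equivalence between cofibrant objects (with the other $f_j$ identities), then $\tilde{\Omega}^{\infty-i}(f^1)$ is a weak equivalence in $\C$; since $F$ is homotopical in each variable, applying $F$ gives a weak equivalence in $\D$; by $\tilde{\Omega}^{\infty-i}$-detection, $\bar{F}'(f^1,\mathrm{id},\ldots,\mathrm{id})$ is a stable equivalence. General tuples are handled by composing single-variable moves, and homotopicality of $\bar{F}'\circ Q^n$ follows by inserting cofibrant replacements. For multilinearity of $\bar{F}'\circ Q^n$, reducedness in each slot is inherited directly from $F$; for $1$-excisiveness in each slot, I would use that $\tilde{\Omega}^{\infty-i}$ preserves homotopy limits and (combined with detection of stable equivalences) also detects hocartesian squares, so that hocartesianness of $\bar{F}'$ on a square in one slot reduces, via the formula, to hocartesianness of $F$ on the corresponding $\C$-square, which holds by $1$-excisiveness of $F$ in that slot (noting that $Sp(\C,\Sigma)$ is stable, so hococartesian squares are hocartesian and $\tilde{\Omega}^{\infty-i}$ carries them to hocartesian squares in $\C$).

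The main obstacle will be the bookkeeping in establishing the key formula: one has to check that the iterated equivalences $\Omega^{nk}F(-)\simeq F(\Omega^k-,\ldots,\Omega^k-)$, produced by migrating loops through one slot at a time, are compatible with the transition maps of the spectrum $\bar{F}'$, which are assembled from a single fixed splitting of $(S^1)^{\wedge n}$. Tracking this compatibility, together with the cofinality needed to replace the diagonal colimit by slotwise colimits, is where all of the technical work lies; once it is in place, the homotopical and multilinearity statements drop out essentially formally.
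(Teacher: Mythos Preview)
Your central formula
\[
\tilde{\Omega}^{\infty-i}\bar{F}'(X^1,\ldots,X^n) \;\simeq\; F\bigl(\tilde{\Omega}^{\infty-i}X^1,\ldots,\tilde{\Omega}^{\infty-i}X^n\bigr)
\]
is false, and the error is precisely the step ``$F$ commutes with $\Omega$ slot-by-slot''. Linearity gives you $F(-)\simeq \Omega F(\Sigma-)$, not $F(\Omega-)\simeq \Omega F(-)$; the assembly map $F(\Omega Y)\to \Omega F(Y)$ factors through $\Omega F(\Sigma\Omega Y)\to \Omega F(Y)$, and $\Sigma\Omega Y\to Y$ is not an equivalence when $\C$ is unstable. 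A concrete counterexample: take $n=1$, $\C=\D=$ pointed spaces, $F=Q=\Omega^\infty\Sigma^\infty$ (reduced, $1$-excisive, preserves filtered hocolims), and $X=\Sigma^\infty S^0$. Then $\bar{F}'(X)_k=QS^k$, so
\[
\tilde{\Omega}^{\infty}\bar{F}'(X)=\hocolim_k \Omega^k QS^k \simeq \hocolim_k QS^0 = QS^0,
\]
whereas $F(\tilde{\Omega}^\infty X)=Q(QS^0)$, and $Q$ is not idempotent. So the formula fails already at the heart of your argument, and both the homotopicality and multilinearity deductions collapse with it. (Even granting the formula, your multilinearity step had a second gap: $\tilde{\Omega}^{\infty-i}$ of a hococartesian square in $Sp(\C)$ is only \emph{hocartesian} in $\C$, and $1$-excisiveness of $F$ says nothing about hocartesian inputs.)

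The paper's proof avoids $\tilde{\Omega}^{\infty-i}$ entirely and works levelwise instead. It first observes that $\bar{F}'$ sends level hocolims and level homotopy pushouts to level hocolims and level homotopy pullbacks (hence stable pushouts, since $Sp(\D,\Sigma^n)$ is stable). By $2$-out-of-$3$ it then suffices to show $\bar{F}'$ sends the localization map $X\to X_{loc}$ to a stable equivalence for cofibrant $X$. By Proposition~\ref{localization} this map is a transfinite composition of homotopy pushouts of maps in $\widetilde{\Delta S}$; since $F$ preserves filtered hocolims and (level) pushouts go to (stable) pushouts, one reduces to maps in $\widetilde{\Delta S}$, which are level equivalences in high degrees and hence sent to stable equivalences. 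Multilinearity then follows because any stable pushout square is stably equivalent to a level pushout square, and $\bar{F}'$ handles those levelwise. The moral: the correct replacement for your formula is to compare stable and level structures on the \emph{source}, not to try to push $\tilde{\Omega}^{\infty-i}$ through $F$.
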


\begin{proof}
First notice that any diagrams in a spectra category that are hocolim/holim diagrams with respect to levelwise w.e.s are also hocolim/holim diagrams with respect to stable w.e.s.

Hence $\bar{F}'$ preserves level direct hocolims, and sends level homotopy pushout diagrams to level homotopy pullback diagrams. But since its target is stable (we show later in the section that $Sp(\D,\Sigma^n)$ is appropriately equivalent to $Sp(\D,\Sigma$)) the latter are stable homotopy pushout diagrams.

Now by the 2 out of 3 property one need only show that $F$ sends the localization maps $X\to X_{loc}$ whenever $X$ is cofibrant (this is because w.e.s between local objects are always levelwise).

But by Proposition \ref{localization} those localization maps are always transfinite composition of pushouts of the maps in $\widetilde{\Delta S}$. As these transfinite compositions and pushouts are in fact homotopy transfinite compositions and homotopy pushouts (in the level structure on $Sp(\C,\Sigma^1)^{\times n}$), and $F$ preserves these (when regarding $Sp(\D,\Sigma^n)$ as having the stable model structure), it suffices to show that $F$ sends maps that are coordinatewise in $\widetilde{\Delta S}$ or identities to w.e.s. But this is now obvious from the description of $\widetilde{\Delta S}$, because those are always maps that are w.e.s in high enough degrees.

For the second part, one needs to show that pushout diagrams are sent to pullback diagrams (or equivalently pushout diagrams). But since any stable pushout diagram is stably equivalent to a level pushout diagram this is now obvious from the previous discussion.

\end{proof}

We have now essentially proven for $\bar{F}'$ the desired properties for $\bar{F}$. The main difference between the two is their target categories, $Sp(\D,\Sigma^n)$ and $Sp(\D,\Sigma)$. But we claim that the two are Quillen equivalent (through a zig zag), as $\Sigma_n$ model categories (the latter having a trivial $\Sigma_n$ action), and in a way compatible with the $\Omega^\infty$ functors, and from this the result follows.

To see why this is plausible, notice that, topologically, $S^n$ decomposes, as a $\Sigma_n$-object, into $S^1 \wedge S^{\bar{\rho}}$, with the action on $S^1$ being trivial and $S^{\bar{\rho}}$ being the reduced regular representation. But then, since inverting $S^1\wedge \bullet$ also inverts $S^{\bar{\rho}} \wedge \bullet$ (notice that the presence of a $\Sigma_n$ action determines the action on the category of spectra, but is irrelevant for the model category structure itself), plausibility follows.

We now set to prove this. As a first step we need to replace the simplicial $S^n$, which lacks any decomposition as above, with something more amenable. That this can be done follows from the following (particular case of a) theorem of Hovey:

\begin{theorem} \label{spquillenequiv}
Let $\C$ be a simplicial model category for which spectra can be defined, as described in \ref{assumpspectra}.

Suppose $f\colon A \to B$ a w.e. of pointed simplicial sets. Then, provided that the domains of the generating cofibrations of $\C$ are cofibrant or that the induced natural transformation of functors $X \wedge \bullet \to Y\wedge \bullet$ is a pointwise w.e., $f$ induces a Quillen equivalence
\[Sp(\C,A)\rightleftarrows Sp(\C,B)\]
\end{theorem}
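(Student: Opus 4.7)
The plan is to construct a restriction-induction Quillen adjunction and verify it is a Quillen equivalence using the detection hypothesis from Section \ref{assumpspectra}.

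First, I would construct the adjunction $f_! \colon Sp(\C,A) \rightleftarrows Sp(\C,B) \colon f^*$. The right adjoint $f^*$ leaves the underlying sequence of a spectrum unchanged but precomposes the structure map $B\wedge X_n \to X_{n+1}$ with $f\wedge \mathrm{id}_{X_n}$; the left adjoint is determined by $f_! F^A_n(K) = F^B_n(K)$ on free spectra and extended via colimits. Since $f^*$ does not alter underlying sequences, it trivially preserves level weak equivalences and level fibrations, so $(f_!,f^*)$ is a Quillen adjunction at the level model structures.

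Second, I would upgrade to the stable model structures. By the universal property of left Bousfield localization, it suffices to show $f^*$ takes stably fibrant objects to stably fibrant objects. A stably fibrant $Y \in Sp(\C,B)$ is a level fibrant $B$-$\Omega$-spectrum, i.e.\ the adjoints $Y_n \xrightarrow{\sim} \mathrm{Map}(B,Y_{n+1})$ are weak equivalences. The adjoint structure maps of $f^* Y$ then factor as
\[Y_n \xrightarrow{\sim} \mathrm{Map}(B,Y_{n+1}) \xrightarrow{\mathrm{Map}(f,Y_{n+1})} \mathrm{Map}(A,Y_{n+1}),\]
where the second map is a weak equivalence by SM7 applied to the weak equivalence $f$ of pointed simplicial sets and the fibrant object $Y_{n+1}$. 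Hence $f^* Y$ is a stably fibrant $A$-$\Omega$-spectrum.

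Third, to establish the Quillen equivalence I would use the hypothesis that stable weak equivalences are detected by the $\tilde{\Omega}^{\infty-n}$ functors. For the derived counit at a stably fibrant $Y$, the underlying sequence of $f^*Y$ agrees with that of $Y$, so $\tilde{\Omega}^{\infty-n}(f^*Y)$ and $\tilde{\Omega}^{\infty-n}(Y)$ are computed as sequential hocolims of the mapping spaces $\mathrm{Map}(A^{\wedge k},Y_{n+k})$ and $\mathrm{Map}(B^{\wedge k},Y_{n+k})$ respectively. The comparison is a levelwise weak equivalence by SM7 applied to the weak equivalence $f^{\wedge k}\colon A^{\wedge k} \to B^{\wedge k}$ between cofibrant pointed simplicial sets and the fibrant $Y_{n+k}$; sequential hocolims preserve w.e.s, so the counit is a stable equivalence. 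For the derived unit $X \to f^* \mathbb{R}f_! X$ with $X$ cofibrant, one needs a parallel computation for $\mathbb{R}f_!X$, reducing by cofinality to free spectra and the same mapping space comparison.

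The main obstacle is controlling $\mathbb{R}f_!$, since $f_!$ is not obviously homotopical with respect to stable equivalences. This is precisely where the two alternative hypotheses of the statement intervene: either (a) the domains of the generating cofibrations are cofibrant, so that free spectra $F^A_n(K)$ are built on objects for which $f\wedge \mathrm{id}_K$ is automatically a weak equivalence via the pushout-product axiom, or (b) the natural transformation $A\wedge \bullet \to B\wedge \bullet$ is assumed to be a pointwise w.e., directly bypassing any cofibrancy issues. In either case one concludes that $f_!$ sends generating cofibrations to maps whose structure maps have the correct homotopy type, and the detection by $\tilde{\Omega}^{\infty-n}$ then collapses the verification of the derived unit to the same mapping-space argument used for the counit.
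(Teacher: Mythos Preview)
The paper does not give a self-contained argument here: its entire proof is a citation to Theorem~5.5 of \cite{Hovey}, together with the observation that Hovey's left-proper-cellular hypotheses are invoked only to guarantee the existence of the stable Bousfield localization, which is already assumed in Section~\ref{assumpspectra}. Your proposal is therefore a genuinely different route---a direct verification rather than an appeal to Hovey---and its overall architecture (build $f_! \dashv f^*$, check it is Quillen for the projective level structures, upgrade to the stable localizations by showing $f^*$ preserves $\Omega$-spectra, then test the equivalence via the $\tilde{\Omega}^{\infty-n}$ detection hypothesis) is reasonable and is close in spirit to how Hovey's own argument specializes here.

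That said, your third paragraph does not actually verify what it claims to. The derived counit at a stably fibrant $Y$ is the map $f_!\,Q(f^*Y) \to Y$ in $Sp(\C,B)$; you never analyse this map. What you compute instead is that $\tilde{\Omega}^{\infty-n}(f^*Y)$ (taken in $Sp(\C,A)$) and $\tilde{\Omega}^{\infty-n}(Y)$ (taken in $Sp(\C,B)$) are weakly equivalent objects of $\C$. That comparison is correct, but it is not the counit, and the two live in different categories. The content of your calculation is really that $f^*$ \emph{reflects} stable equivalences between fibrant objects (since both sides are $\Omega$-spectra, a stable equivalence after $f^*$ is a level equivalence on identical underlying sequences), which is a useful half of a standard Quillen-equivalence criterion---but you should state it as such and then pair it with an honest proof that the derived unit $X \to f^*\,R(f_!X)$ is a stable equivalence for cofibrant $X$. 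Your sketch of the unit (``reducing by cofinality to free spectra and the same mapping space comparison'') is too vague to stand: one must actually compute $\tilde{\Omega}^{\infty-n}$ of a stable fibrant replacement of $f_!X$, and it is exactly here that the two alternative hypotheses enter, ensuring $A\wedge K \to B\wedge K$ is a weak equivalence for the relevant $K$ so that the levelwise mapping-space comparison goes through. As written, you have identified the correct ingredients but not assembled them into a complete argument.
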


\begin{proof}
This is a particular case of Theorem 5.5 in \cite{Hovey}. It is worth noting that in the proof Hovey uses Proposition 2.3 from that paper, which supposes the involved categories to be left proper cellular. However, careful examination of the proofs shows that left properness and cellularity are only used to conclude the existence of the left Bousfield localization model structure, and hence the result still holds provided one knows those to exist.
\end{proof}

\begin{remark}
The Quillen equivalence above is also compatible with the $\Omega^\infty$s. This is clear for the right adjoint (which is a forgetful functor) from the obvious map $\varinjlim Map(B^{\wedge i},X_i)\to \varinjlim Map(A^{\wedge i},X_i)$ being a w.e. when $(X_i)$ is a projective fibrant spectrum. The result also follows for the left adjoint since (it's derived functor) is the inverse of the (derived functor) of the right adjoint.
\end{remark}

Using the above result we are then allowed to replace $S^n$ by a $\Sigma_n$ w.e. simplicial set of the form $S^1\wedge S^{\bar{\rho}}$, possibly resorting to a ziz zag of $\Sigma_n$ w.e. $S^n\gets C \to F \gets S^1\wedge S^{\bar{\rho}}$.

We take this op

We will now be finished by proving the following:

\begin{proposition}

Suppose $\D$ is in the finitely generated case described in \cite{Hovey}, so that stable w.e. in the spectra categories are determined at the level of the $\Omega^{\infty-n}$ functors.

Then there is a $\Sigma_n$ Quillen equivalence 

\[Sp(\D,S^1)\rightleftarrows Sp(\D,S^1\wedge S^{\bar{\rho}})\]

with left adjoint given by 
\[L((X_i)_{i \in \mathbb{N}})=((S^{\bar{\rho}})^{\wedge i}\wedge X_i)_{i \in \mathbb{N}}\]
and right adjoint given by
\[R((Y_i)_{i \in \mathbb{N}})=(Map((S^{\bar{\rho}})^{\wedge i}, Y_i))_{i \in \mathbb{N}}\]

\end{proposition}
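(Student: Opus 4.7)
I plan to establish the Quillen equivalence in four stages: (i) verify the adjunction on the underlying categories, (ii) check Quillen-ness for the projective (level) model structures, (iii) lift this to the stable model structures, and (iv) show it is in fact a Quillen equivalence by exploiting the hypothesis that stable equivalences are detected by $\tilde{\Omega}^{\infty - n}$. The $\Sigma_n$-equivariance is maintained automatically throughout: the source $Sp(\D, S^1)$ carries the trivial action, and every piece of the constructions above is natural in the $\Sigma_n$-action on $S^{\bar{\rho}}$.

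For (i), one applies the simplicial tensor-hom adjunction $(S^{\bar{\rho}})^{\wedge i} \wedge - \dashv Map((S^{\bar{\rho}})^{\wedge i}, -)$ levelwise, with a routine check showing compatibility with structure maps. For (ii), the key computation is $L(F_n X) \cong F_n((S^{\bar{\rho}})^{\wedge n} \wedge X)$, which follows directly from the formula for the free spectra $F_n$. Since $(S^{\bar{\rho}})^{\wedge n}$ is a cofibrant pointed simplicial set and $\D$ is simplicial, $(S^{\bar{\rho}})^{\wedge n} \wedge (-)$ preserves (trivial) cofibrations, so $L$ sends the generating projective (trivial) cofibrations to projective (trivial) cofibrations. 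For (iii), by the criterion for a left Quillen functor to descend along a left Bousfield localization (\cite{Hirsch}), it suffices to show that $L$ sends each stabilizing map $F_{n+1}(S^1 \wedge A) \to F_n A$ to a stable equivalence in $Sp(\D, S^1 \wedge S^{\bar{\rho}})$. Using the isomorphism from (ii), the image identifies with (a cofibrant factor of) a map $F_{n+1}((S^{\bar{\rho}})^{\wedge(n+1)} \wedge S^1 \wedge A) \to F_n((S^{\bar{\rho}})^{\wedge n} \wedge A)$; the rearrangement $(S^{\bar{\rho}})^{\wedge(n+1)} \wedge S^1 \cong (S^1 \wedge S^{\bar{\rho}}) \wedge (S^{\bar{\rho}})^{\wedge n}$ then exhibits this as (a factor of) a stabilizing map for the target category.

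For (iv), by the detection hypothesis on the $\tilde{\Omega}^{\infty - i}$ functors it suffices to show (a) that $R$ reflects stable equivalences between stably fibrant objects, and (b) that the derived unit $X \to RLX$ is a stable equivalence for cofibrant $X$. Part (a) follows because $R$ is given levelwise by $Map((S^{\bar{\rho}})^{\wedge i}, -)$, which preserves level equivalences of levelwise fibrant spectra. For (b), one computes
\[\tilde{\Omega}^{\infty - i}(LX) \;=\; \hocolim_k Map((S^1 \wedge S^{\bar{\rho}})^{\wedge k}, (S^{\bar{\rho}})^{\wedge (i+k)} \wedge X^{fib}_{i+k}),\]
which unwinds, via $(S^1 \wedge S^{\bar{\rho}})^{\wedge k} \cong (S^1)^{\wedge k} \wedge (S^{\bar{\rho}})^{\wedge k}$ and adjunction, to an iterated hocolim
\[\hocolim_k Map((S^1)^{\wedge k}, Map((S^{\bar{\rho}})^{\wedge k}, (S^{\bar{\rho}})^{\wedge k} \wedge (S^{\bar{\rho}})^{\wedge i} \wedge X^{fib}_{i+k})).\]
The natural unit map $Z \to Map((S^{\bar{\rho}})^{\wedge k}, (S^{\bar{\rho}})^{\wedge k} \wedge Z)$ becomes a stable equivalence in $Sp(\D, S^1)$ as $k$ grows, because $S^{\bar{\rho}}$ is stably invertible (this follows from $S^n = S^1 \wedge S^{\bar{\rho}}$ being invertible once $S^1$ is). A cofinality/interchange argument then reduces $\tilde{\Omega}^{\infty-i}(LX)$ to $\tilde{\Omega}^{\infty - i}(X)$, showing that the derived unit is an equivalence.

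The main technical obstacle is the cofinality argument in (iv): rigorously interchanging the ``inner'' stabilization with respect to $S^{\bar{\rho}}$ and the ``outer'' stabilization with respect to $S^1$ at the level of iterated filtered homotopy colimits, while preserving $\Sigma_n$-equivariance. This relies on the almost-finitely-generated hypothesis on $\D$ provided by Theorem \ref{hoveyresult1} (so that $Map(S^1, -)$ and $Map(S^{\bar{\rho}}, -)$ commute with the relevant countable directed hocolims); equivariance is inherited from the $\Sigma_n$-action on $S^{\bar{\rho}}$ since all intermediate natural transformations were constructed in a manifestly $\Sigma_n$-equivariant manner.
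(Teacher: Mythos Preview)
Your stages (i)--(iii) match the paper's argument essentially verbatim: the paper also checks that $L(F_n f) = F_n(f \wedge (S^{\bar\rho})^{\wedge n})$ to get the projective Quillen adjunction, and then that the localizing maps $F_{n+1}(X\wedge S^1)\to F_n X$ are sent to stable equivalences.

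For stage (iv) the strategies diverge. The paper does \emph{not} organize the argument as ``$R$ reflects equivalences + derived unit is an equivalence''; instead it writes out $\tilde\Omega^{\infty-n}$ on both sides of an adjoint pair $LX\to Y$, $X\to RY$ and compares directly. More importantly, before doing any cofinality the paper \emph{discards the $\Sigma_n$-action}: since equivariance is irrelevant for detecting weak equivalences, one may replace $S^{\bar\rho}$ by $(S^1)^{\wedge(n-1)}$ via Theorem~\ref{spquillenequiv} and reduce to the adjunction $Sp(\D,S^1)\rightleftarrows Sp(\D,S^n)$. The remaining cofinality is then an explicit ``slope~$1$ versus slope~$n$'' argument in a two-dimensional grid of $\tilde\Omega^j(\Sigma^k X_i)$'s, with careful tracking of which suspension coordinate feeds the structure map. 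Your route keeps $S^{\bar\rho}$ throughout and appeals to its stable invertibility in $Sp(\D,S^1)$; this is correct in the homotopy category, but turning ``$Z\to Map((S^{\bar\rho})^{\wedge k},(S^{\bar\rho})^{\wedge k}\wedge Z)$ becomes an equivalence as $k$ grows'' into an honest identification of the two sequential hocolims is exactly the content of the paper's grid argument, and is not quite as routine as your sketch suggests. The paper's reduction to $(S^1)^{\wedge(n-1)}$ buys a cleaner bookkeeping of coordinates; your approach buys not having to invoke Theorem~\ref{spquillenequiv} a second time.

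One genuine slip: in your (a) you argue that $R$ \emph{preserves} level equivalences between levelwise fibrant spectra, but what you need is that $R$ \emph{reflects} stable equivalences between stably fibrant objects. Preservation alone does not give reflection, and $Map((S^{\bar\rho})^{\wedge i},-)$ certainly does not reflect weak equivalences levelwise. You can repair this by instead checking the derived counit (or, as the paper does, by comparing $\tilde\Omega^{\infty-n}$ on both adjoint maps simultaneously), but as written (a) does not establish what you claim.
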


\begin{proof}
First we show this is a Quillen adjuntion. For the projective model structures we clearly have a Quillen adjunction, since the left adjoint carries generating cofibrations/triv. cofibrations to cofibrations/triv. cofibrations (namely, a generating cofib/triv cofib of the form $F_n(f)$ is sent to $F_n(f \wedge (S^{\bar{\rho}})^{\wedge n})$).

Similarly, the localizing set of maps $F_{n+1}(X\wedge S^1) \to F_n{X}$ is sent to $F_{n+1}(X\wedge S^1\wedge (S^{\bar{\rho}})^{\wedge n+1}) \to F_n({X}\wedge (S^{\bar{\rho}})^{\wedge n})$, also a stable equivalence. Hence one has indeed a Quillen adjunction.

We now turn to proving that this is indeed a Quillen equivalence.

As usual, consider a cofibrant $S^1$ spectrum $X$, a fibrant $S^1\wedge S^{\bar{\rho}}$ spectrum $Y$, so that we want to show a map $LX \to Y$ is w.e. iff the map $X \to RY$ is.

We now use our hypothesis that w.e. are determined at the $\tilde{\Omega}^{\infty -n}$ level. For $LX \to Y$ this yields maps (where $\tilde{\Omega}$ denotes true homotopical loops)
\[\hocolim_i \tilde{\Omega}^{i(1+\bar{\rho})}((S^{\bar{\rho}})^{i+n}\wedge X_{i+n}) \to \hocolim_i \tilde{\Omega}^{i(1+\bar{\rho})}Y_{i+n}\]
while for $X \to RY$ it yields maps
\[\hocolim_i \tilde{\Omega}^nX_{i+n} \to \hocolim_i \tilde{\Omega}^i Map((S^{\bar{\rho}})^{\wedge i}, Y_{i+n})= \hocolim_i \tilde{\Omega}^{i(1+\bar{\rho})}Y_{i+n}.\]

But since this last map factors through the obvious map \[\hocolim_i \Omega^nX_{i+n} \to \hocolim_i \Omega^{i(1+\bar{\rho})}((S^{\bar{\rho}})^{i+n}\wedge X_{i+n}),\]
it remains to show that these maps are w.e. and, without loss of generality, to do so for $i=0$.

Next one notices that the $\Sigma_n$ actions are irrelevant as far as detecting w.e.s, so that we will be done if we prove the analogous result for the analogous adjuntction of model categories\footnote{To see this one may find a bifibrant $S^{\bar{\rho}} \xrightarrow{\sim} C \xleftarrow{\sim} (S^1)^{n-1}$ and apply \ref{spquillenequiv}.} given by the obvious analogous formulae:

\[Sp(\D,S^1)\rightleftarrows Sp(\D,S^1\wedge (S^1)^{n-1})\]

In order words, it now remains to prove the map
\[\hocolim_i \tilde{\Omega}^iX_{i} \to \hocolim_i \tilde{\Omega}^{i n}((S^{n-1})^{i} \wedge X_{i}) = \hocolim \tilde{\Omega}^{i}\tilde{\Omega}^{(n-1)i}\Sigma^{(n-1)i} X_{i}\]
is a w.e..
At this point a certain amount of care must be taken with the suspension/loop coordinates. Henceforth assume suspension coordinates ordered left to right (and loop coordinates ordered inversely to make the notation compatible with applying adjunction unit maps).

The maps defining the left $\hocolim$ are of the form
\begin{equation}
\begin{split}
\tilde{\Omega}^{i}\tilde{\Omega}^{(n-1)i}\Sigma^{(n-1)i} X_{i} & \to \tilde{\Omega}^{i}\tilde{\Omega}^{(n-1)i}\tilde{\Omega}^n \Sigma ^n \Sigma^{(n-1)i} X_{i} \\ & 
\to \tilde{\Omega}^{i} \tilde{\Omega} \tilde{\Omega}^{(n-1)i}\tilde{\Omega}^{\{n,\cdots,2\}} \Sigma^{\{2,\cdots,n\}} \Sigma^{(n-1)i} \Sigma X_{i} \\
& \to \tilde{\Omega}^{i+1} \tilde{\Omega}^{(n-1)(i+1)} \Sigma^{(n-1)(i+1)} X_{i+1}
\end{split}
\end{equation}
where the first map is the unit map, and the second map moves the first coordinate in the inner $\Sigma^n$ until it is adjacent to $X_i$ (and, by symmetry, moves the associated loop coordinate in the opposite direction), and the final map is induced by $\Sigma X_i \to X_{i+1}$ (we also perform some rewriting of terms, \emph{but no further shuffling}).

By contrast, consider the hocolimit with the same terms, but with the reshufflings in the middle step removed, so that the outer suspension coordinate is the one used in the final map.

These two hocolimits are of course equivalent, since when they are computed using only the intermediate terms $\tilde{\Omega}^{i}\tilde{\Omega}^{(n-1)i}\tilde{\Omega}^n \Sigma ^n \Sigma^{(n-1)i} X_{i}$, the direct (sub)systems are actually isomorphic. 

Now notice that the unshuffled direct system can be identified with $\tilde{\Omega}^\infty$ for a new $S^n$-spectrum $(S^{(n-1)i} \wedge X_i)_{i \in \mathbb{N}}$ with structure maps $S^n \wedge S^{(n-1)i} \wedge X_i = S^{(n-1)(i+1)} \wedge S^1 \wedge X_i \to S^{(n-1)(i+1)} \wedge X_{i+1}$ (with no shuffling used).

It then suffices to show the map relating this last direct system with that for $\Omega^{\infty}$ of $(X_i)_{i \in \mathbb{N}}$ induces a w.e. on hocolims.

To see this consider the diagram
\[
\xymatrix{X_0 & S^1 \wedge X_0 & S^2 \wedge X_0 & S^3 \wedge X_0 & S^4 \wedge X_0  & \dots \\
& X_1 & S^1 \wedge X_1 & S^2 \wedge X_1 & S^3 \wedge X_1 & \dots \\
& & X_2 & S^1 \wedge X_2 & S^2 \wedge X_2 & \dots \\
& & & \ddots & & },
\]
where the lines represent the spaces of $S^1$-spectra, and the columns obvious maps between these spectra. Now consider applying $\tilde{\Omega}^n$ to the $n$-th column, so that one has a full diagram. Notice that the direct system for $\tilde{\Omega}^\infty$ of $(X_i)_{i \in \mathbb{N}}$ is the line of slope 1 in this diagram, while the direct system for $\tilde{\Omega}^{\infty}$ of $(S^{(n-1)i} \wedge X_i)_{i \in \mathbb{N}}$ is the line of slope $n$. But since tracking definitions shows the map induced between those direct systems is given by traveling horizontally in the diagram above, the result finally follows by cofinality of those lines in the diagram.


\end{proof}

\end{document}